\theoremstyle{plain}
\newtheorem{thm}{Theorem}[section]
\newtheorem{lem}[thm]{Lemma}
\newtheorem{obs}[thm]{Observation}
\newtheorem{prop}[thm]{Proposition}
\newtheorem{cor}[thm]{Corollary}
\newtheorem{claim}{Claim}
\newtheorem{case}{Case}
\newtheorem*{claim*}{Claim}
\newtheorem{que}{Question}
\newenvironment{thmABC}[1]
  {\innerthmABC}
  {\endinnerthmABC}
\theoremstyle{definition}
\newtheorem{defi}[thm]{Definition}
\newtheorem{ex}[thm]{Example}
\theoremstyle{remark}
\newtheorem{rem}{Remark}[thm]
\newcommand{\R}{\mathbb{R}}
\newcommand{\RR}{\mathcal{R}}
\newcommand{\Rr}[1]{\mathcal{R}^{\underline{#1}}}
\newcommand{\I}{\mathcal{I}}
\newcommand{\Z}{\mathcal{Z}}
\newcommand{\ord}{\textnormal{ord}}
\newcommand{\indet}{\textnormal{indet}}
\newcommand{\dom}{\textnormal{dom}}
\newcommand{\codim}{\textnormal{codim}}
\title{Extensions of $k$-regulous functions from two-dimensional varieties}
\author{Juliusz Banecki}
\affil{Faculty of Mathematics and Computer Science\\
Jagiellonian University\\
Krakow, Poland\\
juliusz.banecki@student.uj.edu.pl}
\date{}
\begin{document}

\maketitle

\begin{abstract}
We prove that a $k$-regulous function defined on a two-dimensional non-singular affine variety can be extended to an ambient variety. Additionally we derive some results concerning sums of squares of $k$-regulous functions; in particular we show that every positive semi-definite regular function on a non-singular affine variety can be written as a sum of squares of locally Lipschitz regulous functions.
\end{abstract}

\section{Introduction}

\emph{In the paper real affine varieties are taken to be defined as in \cite{Bochnak}. The ring of regular functions of such a variety $X$ is denoted by $\RR(X)$. If moreover the variety is irreducible, the field of rational functions is denoted by $\R(X)$.}

We have to begin with a quick discussion on a fundamental definition in the recently founded field of \emph{regulous geometry}. The field is concerned with studying certain subclasses of continuous functions on real algebraic varieties admitting rational representations. One of the first papers attempting on classification of such functions is \cite{francuska}. In the paper, given a real affine variety $X\subset \R^n$, the authors consider two classes of functions on $X$:
\begin{enumerate}
    \item for a non-singular $X$, real valued functions on $X$ which are of class $\mathcal{C}^k$ and become regular after being restricted to some Zariski open and dense subset of $X$  (\cite{francuska}, D\'efinition 2.15),
    \item for a possibly singular $X$, real valued functions on $X$ for which there exists an extension to the entire affine space $\R^n$ satisfying the condition above (\cite{francuska}, page 137).
\end{enumerate}
In the discussed paper both of these two classes are called \emph{k-regulous}. It is believed that for a non-singular $X$ the two notions coincide, nonetheless it has not been proven yet. Thanks to \cite{real_and_p-adic} such a result is known for $k=0$, and this makes the definition of a $0$-regulous function clear. It is also a result of \cite{real_and_p-adic} that the condition $2$ implies condition $1$ for an arbitrary value of $k$. Unfortunately both of the two notions have been used as definitions of $k$-regulous functions in different papers before, causing some confusion. Because of that, before introducing our results we need to choose to which definition we will stick to.

In recent works the property $1$ is the one which is more commonly taken as the definition of a $k$-regulous function. An example of such usage is the paper \cite{open_problems}. Moreover this definition has the nice property that it is clearly invariant under biregular isomorphisms, and it does not highlight the space $\R^n$ in any special way. For these reasons we choose to use this one in the paper. Let us state this formally, for simplicity we will work only with irreducible varieties:

\begin{defi}
Let $X$ be a non-singular irreducible real affine variety. We define the ring of $k$-regulous functions, denoted by $\RR^k(X)$, to consist of all the functions satisfying property $1$. We associate such functions with their rational representations, such an association is clearly well defined and defines a ring embedding $\RR^k(X)\subset \R(X)$.
\end{defi}

The main result of our paper states that the two possible definitions coincide when the dimension is equal to two. Stating it formally:

\begin{thmABC}{A}\label{main_theorem}
Let $X$ be a two-dimensional closed non-singular irreducible subvariety of an affine non-singular irreducible variety $Y$. Every $k$-regulous function on $X$ is a restriction of some $k$-regulous function on $Y$.
\end{thmABC}

The theory developed here allows us to prove some additional results regarding $k$-regulous functions, not necessarily in dimension $2$. The most interesting ones are connected with Hilbert's $17$th Problem. It has been known for a long time that there exist positive semi-definite polynomials in more than two variables which admit \emph{bad points} i.e. they cannot be written as sums of squares of regular functions (see \cite{bad_points} for some discussion). It is desired to study whether there are some weaker regularity conditions which can be imposed on the squared functions. The most general result in this spirit known so far follows straight from the Positivestellensatz and says that a psd regular function can always be written as a sum of squares of 0-regulous functions (see \cite{regular_after_blowingups}). We are able to strengthen the result in the following way:
\begin{thmABC}{S1}\label{sos1}
Let $X$ be a non-singular irreducible affine variety. Every positive semi-definite function $f\in\RR(X)$ can be written as a sum of squares of locally Lipschitz $0$-regulous functions on $X$.
\end{thmABC}
In a similar manner we are able to give a sufficient condition for a regular function to be representable as a sum of squares of $k$-regulous functions:
\begin{thmABC}{S2}\label{sos2}
Let $X$ be a non-singular irreducible affine variety and let $f\in\RR(X)$ be a sum of $2k$-th powers of rational functions. Suppose that every zero of $f$ is also a zero of its derivatives up to order $2k$. Then it is a sum of squares in $\RR^k(X)$.
\end{thmABC}
These results cannot be improved much further; see Section \ref{Discussion} for discussion.

Lastly we significantly improve \cite[Corollary 2.14]{regular_after_blowingups} under the assumption on the dimension being equal to two:
\begin{thmABC}{B}\label{additional_theorem}
Let $m,l\geq 0$. Let $X$ be a non-singular irreducible affine variety of dimension $2$ and let $f\in\RR^k(X)$ be written as a fraction $f=\frac{p}{q}$ with $p,q\in\RR(X)$. Then
\begin{equation*}
    p^lf^m\in\RR^{kl+k+2l}(X).
\end{equation*}
\end{thmABC}
The result does not seem to admit any important applications, although it is quite surprising how high this class of smoothness is.

\section{Preliminaries}\label{Preliminaries}
\emph{From now on, unless explicitly stated otherwise, \say{variety} means a non-singular irreducible real affine variety.}

\subsection{Partial derivatives on manifolds}
Firstly we need to recall a basic notion from the theory of smooth manifolds. 

Let $M$ be a smooth manifold of dimension $k$ and $\varphi=(x_1,\dots,x_k):M\rightarrow \R^k$ a smooth mapping. Suppose that $a\in M$ is a point at which $\varphi$ is non-singular i.e. its derivative is an isomorphism between the tangent space of $M$ at $a$ and $\R^k$. Let $e_1,\dots,e_k$ be the canonical basis of $\R^k$. Then the following basis of the tangent space at $a$
\begin{equation*}
    ((d\varphi_a)^{-1}(e_1),\dots,(d\varphi_a)^{-1}(e_k))
\end{equation*}
is called the \emph{coordinate induced tangent basis of }$T_aM$. Since the tangent space is canonically identified with the space of derivations at the point $a$ this allows for a definition of partial derivatives with respect to a regular system of parameters:
\begin{defi}
Let $g:M\rightarrow \R$ be a function differentiable at $a$. We define the partial derivative of $g$ with respect to $x_i$ at the point $a$ to be equal to
\begin{equation*}
    \pdv{g}{x_i}(a):=dg_a((d\varphi_a)^{-1}(e_i)).
\end{equation*}

The gradient vector $[\pdv{g}{x_i}]_{1\leq i\leq k}$ of $g$ will be denoted by abuse of notation by $dg_{x_1,\dots,x_k}$ or just by $dg$.
\end{defi}

The definition above is of course well known, what is less known is that it translates well to the algebraic setting. 

Let now $X$ be a variety, $a$ a point of $X$, and $x_1,\dots,x_k$ a system of parameters of $\RR(X)_a$. Choose any embedding $X \subset \R^n$. We can find a Zariski open neighbourhood $B \subset \R^n$ of $a$ and a system of parameters $y_1,\dots,y_n \in \RR(\R^n)_{a}$ at $a$ such that 
\begin{itemize}
    \item $x_1,\dots,x_k\in \RR(A:=B\cap X)$
    \item $y_1,\dots,y_n\in\RR(B)$
    \item $x_i = y_i + \mathcal{I}(A)$ for $1 \leq i \leq k$
    \item $y_{k+1},\dots,y_n\in \I(A)$
    \item $\det\left(\left[\pdv{y_i}{e_j}\right]_{1\leq i,j\leq n}\right)\in\RR(B)^\ast$.
\end{itemize}

In this setting we define two functions $\varphi$ and $\psi$ by
\begin{align*}
    &\varphi: A \ni x \mapsto (x_1(x),\dots,x_k(x)) \in \R^k \\
    &\psi: B \ni x \mapsto (y_1(x),\dots,y_n(x)) \in \R^n.
\end{align*}
The set $B$ is chosen in such a way that the derivative of $\psi$ is non-singular at every point thereof. This means that the derivative $\varphi$ is non-singular at every point of $A$, so we can define the partial derivative $\pdv{g}{x_i}$ of a regular function $g\in\RR(A)$. Conveniently it turns out to be a regular function itself:
\begin{obs}
$\pdv{g}{x_i}$ is a regular function on $A$.
\begin{proof}
Let us first consider the case when $n=k$, i.e. $X=\R^n$ and $x_i=y_i$. With this assumption the Jacobian matrix $[\pdv{x_i}{e_j}]_{1\leq i,j\leq n}$ of $\varphi$ is invertible in $\RR(A)$. By the definition:
\begin{equation*}
    \pdv{g}{x_i}(x)=dg_x((d\varphi_x)^{-1}(e_i))=dg_x\cdot (d\varphi_x)^{-1}\cdot e_i\in\RR(A).
\end{equation*}
Notice that a local inverse of $\varphi$ may not be a regular function and we do not require it to be one, we solely invert the derivative of $\varphi$ and not the function itself.

The general case now follows easily: let $G$ be a regular extension of $g$ to $B$. It is clear from the definitions that $\pdv{g}{x_i}=\pdv{G}{y_i}\vert_A$, and the latter function is regular on $B$ from the preceding case.
\end{proof}
\end{obs}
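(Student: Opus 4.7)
\emph{Plan.} My strategy mirrors the structure of the excerpt: first handle the purely ambient case $X = \R^n$, where the assertion is essentially Cramer's rule, then reduce the general case to it by lifting $g$ to an ambient regular function on $B$. Since $A$ is a closed subvariety of the affine set $B$, the standard restriction map $\RR(B) \twoheadrightarrow \RR(A)$ is surjective, so $g$ admits some regular extension $G \in \RR(B)$. If I can show that $\pdv{G}{y_i}$ is regular on $B$ and that $\pdv{g}{x_i} = \pdv{G}{y_i}\restriction_A$ pointwise on $A$, the observation follows immediately.

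\emph{The ambient case.} When $n = k$ and $A = B$, the Jacobian $\left[\pdv{y_i}{e_j}\right]_{1\leq i,j \leq n}$ is a matrix of regular functions whose determinant is a unit in $\RR(A)$, so by Cramer's rule the entries of $D\varphi(x)^{-1}$ are themselves elements of $\RR(A)$. Plugging this into the definition
\begin{equation*}
    \pdv{g}{y_i}(x) = Dg(x)\cdot D\varphi(x)^{-1}\cdot e_i
\end{equation*}
exhibits $\pdv{g}{y_i}$ as an $\RR(A)$-linear combination of the ordinary partial derivatives $\partial g/\partial e_j$, each of which is regular because $g$ is. Applied to $G \in \RR(B) \subset \RR(\R^n)$, this yields that $\pdv{G}{y_i} \in \RR(B)$, and hence its restriction to $A$ is regular on $A$.

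\emph{Matching the two derivatives.} The remaining step is to verify $\pdv{g}{x_i} = \pdv{G}{y_i}\restriction_A$ pointwise, and this is where the specific shape of a regular neighborhood is used. Fix $x \in A$ and set $w_i := D\psi(x)^{-1}(e_i) \in T_xB = \R^n$ for $1 \leq i \leq k$. Since $\I(A) = (y_{k+1},\dots,y_n)$, the tangent space $T_xA \subset T_xB$ is precisely the common kernel of the differentials $dy_{k+1},\dots,dy_n$, and the equalities $dy_j(w_i) = \delta_{ij}$ force $w_i \in T_xA$. The condition $x_j = y_j + \I(A)$ for $j \leq k$ gives $dx_j(w_i) = \delta_{ij}$, so by definition $w_i = D\varphi(x)^{-1}(e_i)$. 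Since $G$ and $g$ agree on $A$, derivations along this common tangent vector produce the same value, giving $\pdv{g}{x_i}(x) = \pdv{G}{y_i}(x)$. The main obstacle is really this last identification: the Euclidean case and the extension are largely formal, but carefully matching the intrinsic coordinate-induced tangent basis of $T_xA$ with the first $k$ vectors of the ambient basis of $T_xB$ is the geometric content of the hypotheses on a regular neighborhood.
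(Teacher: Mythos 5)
Your proposal follows the same two-step reduction as the paper: first the ambient case $X=\R^n$ via Cramer's rule, then extending $g$ to $G\in\RR(B)$ and restricting. The only difference is that where the paper writes "it is clear from the definitions that $\pdv{g}{x_i}=\pdv{G}{y_i}\restriction_A$," you supply the explicit verification by matching the tangent vectors $D\psi(x)^{-1}(e_i)$ with the coordinate-induced basis of $T_xA$; this check is correct and fills in the step the paper leaves implicit.
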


Now if $f:=\frac{p}{q},\;p,q\in\RR(A)$ is a rational function which is not necessarily regular on $A$ then we naturally define its partial derivative to be equal to
\begin{equation*}
    \pdv{f}{x_i}=\frac{\pdv{p}{x_i}q-p\pdv{q}{x_i}}{q^2}\in \R(X)
\end{equation*}
It is clear that this definition does not depend on the representation $f=\frac{p}{q}$. Notice also that if $f$ is regular at the point $a$ then we can take $q$ to be non-vanishing at $a$, so $\pdv{f}{x_i}\in\RR(X)_a$.

Let us summarise the main points of this construction:
\begin{itemize}
    \item Given a variety $X$ and a regular system of parameters $(x_1,\dots,x_k)$ at $a\in X$ we define partial derivatives $\pdv{}{x_i}$, $1\leq i\leq k$ which are mappings from $\R(X)$ into itself. This definition does not depend on the embedding $X\subset \R^n$ or on any other choice we have made along the way. Actually it is not even important to highlight the point $a$, the derivatives depend solely on the system $(x_1,\dots,x_k)$.
    \item Partial derivatives of functions regular at $a$ stay regular at $a$. Since the definition does not depend on the point $a$ this is also true for any other point at which the mapping $\varphi=(x_1,\dots,x_k)$ is defined and its derivative is an isomorphism.
    \item Hence if we take $A_1$ to be a Zariski open set such that the derivative of $\varphi$ is non-singular at every point of it, then partial derivatives of functions regular on $A_1$ stay regular on $A_1$. In this case we say that \emph{$A_1$ is regular for the system $(x_1,\dots,x_k)$}. Equivalently we might state this condition as 
    \begin{equation*}
        \forall_{a'\in A_1} (x_1-x_1(a'),\dots, x_k-x_k(a'))\text{ is a system of parameters of }\RR(X)_{a'}. 
    \end{equation*} 
    \item If $X$ is a variety which is regular for some highlighted system of parameters $(x_1,\dots,x_k)$ then we might sometimes say that \emph{derivation is defined} on $X$. We will often restrict ourselves to this convenient setting in which globally regular functions stay regular under differentiation. 
\end{itemize}

\subsection{Definitions}

We need to introduce some notation used frequently later in the paper:
\begin{defi}
Let $X$ be a variety and $f\in \R(X)$ a rational function. We define $\dom(f)$ as the set $\{a\in X: f\in\RR(X)_a\}$ and $\indet(f):=X\backslash \dom(f)$.
\end{defi}

\begin{defi}
Let $X$ be a variety and let $f$ be a real valued function defined on some subset $B$ of $X$ (usually $f$ will be taken to be a rational function with $B=\dom(f)$). Let $A\subset X$ be an arbitrary set. We say that $f$ is locally bounded on $A$, or write $f\in L.B.(A)$, if for every $a\in A$ there exists a Euclidean neighbourhood $U$ of $a$ in $X$ such that $f$ is bounded on $U\cap A \cap B$.
\end{defi}

\begin{ex}
With this definition $1/x$ is locally bounded on $\R\backslash\{0\}$, although it is not locally bounded on $\R$.
\end{ex}

\begin{defi}
Let $A\subset X$. For $f,g\in L.B.(A)$ we define:
\begin{equation*}
    v_{f,A}(g):=\sup\{ \alpha\geq 0:\frac{g}{|f|^\alpha} \in L.B.(A)\}
\end{equation*}
We understand that the domain of the fraction is precisely the set of points at which $f$ and $g$ are defined and $f$ is non-zero.
\end{defi}

\begin{rem}
Observe that if $f,g$ and $A$ are semialgebraic then we must have
\begin{equation*}
   \frac{g}{|f|^{v_{f,A}(g)}}\in L.B.(A) 
\end{equation*}
Indeed, for suppose the contrary. Then by the Curve Selection Lemma we would find an arc $\varphi:(0,1)\rightarrow A\cap \dom(f)\cap \dom(g)$, with the limit $\lim_{t\rightarrow 0} \varphi(t)$ existing in $A$, such that $f\circ \varphi(t)\neq 0$ for $t\in (0,1)$ and 
\begin{equation*}
    \lim_{t\rightarrow 0} \frac{|g|}{|f|^{v_{f,A}(g)}}\circ \varphi(t)=\infty
\end{equation*}
Now, applying Łojasiewicz's Inequality (\cite[Proposition 2.6.4]{Bochnak}) to $\frac{|f|^{v_{f,A}(g)}}{|g|}\circ \varphi$ and $\frac{1}{|f|}\circ \varphi$ we get that for $N>0$ large enough we have
\begin{equation*}
    \lim_{t\rightarrow 0} \frac{|f|^{v_{f,A}(g)N-1}}{|g|^N}\circ \varphi(t)=0.
\end{equation*}
This shows that for $\epsilon>0$ small enough 
\begin{equation*}
    \lim_{t\rightarrow 0} \frac{|g|}{|f|^{v_{f,A}(g)-\epsilon}}\circ \varphi(t)=\infty
\end{equation*}
which contradicts the definition of $v_{f,A}(g)$.
\end{rem}

Let now $X$ be a variety, we define two following subrings of $\R(X)$:
\begin{defi}
$\Rr{0}(X)$ is defined to consist of all the rational functions which are locally bounded on $X$. 
\end{defi}

\begin{defi}
$\Rr{1}(X)\subset \RR^0(X)$ is defined to consist of all the $0$-regulous functions which are locally Lipschitz on $X$.
\end{defi}

\begin{rem}
The above definition is stated only for the sake of Theorem \ref{sos1}.
\end{rem}

\begin{obs}\label{sub1_to_sub0}
Let $X$ be a variety regular for some system of parameters $x_1,\dots,x_k$. Let $f\in\R(X)$. Then $f\in\Rr{1}(X)$ if and only if all the partial derivatives of $f$ belong to $\Rr{0}(X)$.
\begin{proof}
Implication in the one direction is easy as every partial derivative of a differentiable Lipchitz function is bounded by its Lipschitz constant. 

Assume now that the partial derivatives of $f$ all belong to $\Rr{0}(X)$. We firstly would like to show that the set $\indet(f)$ is of codimension at least $2$. Suppose the contrary and take $Z$ to be an irreducible component of the set of codimension $1$. Let $\RR(X)_Z$ be the local ring of the subvariety $Z$, i.e. the localisation of $\R(X)$ at $\I(Z)$. It is known to be a discrete valuation ring, let $q$ generate its maximal ideal. By our assumption $\ord_q(f)<0$ so we can represent $f$ as 
\begin{equation*}
    f=\frac{p}{q^\alpha}
\end{equation*}
for some $\alpha>0$, $p\in\RR(X)_Z^\ast$. Now notice that the derivative of $q$ does not vanish at a generic point of $Z$, so $q$ does not divide $\pdv{q}{x_i}$ for some $i$. Computing the derivative of $f$ we get
\begin{equation*}
    \pdv{f}{x_i}=\frac{\pdv{p}{x_i}q-\alpha p\pdv{q}{x_i}}{q^{\alpha+1}}
\end{equation*}
As the numerator is not divisible by $q$ we have $\pdv{f}{x_i}\not\in\RR(X)_Z$ so the derivative diverges at a generic point of $Z$, a contradiction.

We now basically have to reapply the reasoning in the proof of \cite[Lemma 2.16]{regular_after_blowingups}. Choose a point $a\in\indet(f)$. We find a small neighbourhood $U$ of $a$, diffeomorphically transformed into a convex set by $(x_1,\dots,x_k)$, and such that the derivative of $f$ is bounded on $U\cap\dom(f)$. Since the indeterminacy locus of $f$ is of codimension at least two, every two points of $U\cap\dom(f)$ can be connected by a curve contained in the set of length slightly longer than the distance between. By the mean value theorem this implies that $f$ is Lipschitz on $U\cap\dom(f)$, so it extends in a unique way to a Lipschitz function on $U$. As $a$ was arbitrary the conclusion follows. 
\end{proof}
\end{obs}

\begin{cor}\label{regulous_if_derivatives}
Let $X$ be a variety on which derivation is defined. Let $f\in\R(X)$. Then $f\in\RR^k(X)$ if and only if $\frac{\partial^k f}{\partial x_{i_1}\dots\partial x_{i_k}}\in\RR^0(X)$ for every $1\leq i_1,\dots,i_k\leq n$.
\begin{proof}
Implication in the one direction is trivial, if $f$ is $k$-regulous then the partial derivatives of $f$ of order $k$ are rational continuous functions. In the other direction we proceed by induction on $k$. By Observation \ref{sub1_to_sub0} and the inductive assumption we have $f\in\Rr{1}(X)\subset \RR^0(X)$. Now we just need to repeat the proof of \cite[Theorem 2.12]{regular_after_blowingups} in the new slightly more general setting.
\end{proof}
\end{cor}

We will now define the central concept of the paper, namely \emph{$\RR^k$-flat} functions.

\begin{defi}
Let $k\geq 1$ be a natural number and let $a$ be a point of a variety $X$. We say that a fraction $\frac{p}{q}$ with $p,q\in\RR(X)_a$ is an $\RR^k$-flat representation at $a$ if there exists a Zariski open neighbourhood $A$ of $a$ on which derivation is defined, such that $p,q\in\RR(A)$ and:
\begin{center}
    The function $\frac{p||dq||^k}{q^{k+1}}$ approaches zero at all points of the set $\Z(q)\cap A$.
\end{center}
Obviously this condition is independent on the choice of the system of parameters on $A$. Notice that by Łojasiewicz's inequality (\cite[Proposition 2.6.4]{Bochnak}) applied to $\frac{p||dq||^k}{q^{k+1}}$ and $\frac{1}{q}$ the condition may be equivalently stated as 
\begin{equation*}
    v_{q,A}(p||dq||^k)>k+1.
\end{equation*}
In a completely similar fashion we say that $\frac{p}{q}$ is an $\Rr{1}$-flat representation at $a$ if $\frac{p||dq||}{q^2}\in L.B.(A)$ (which is equivalent to $v_{q,A}(p||dq||)\geq 2$).
\end{defi}

\begin{defi}
A rational function $f$ is said to be locally $\RR^k$-flat at $a$ (resp. locally $\Rr{1}$-flat at $a$) if it admits an $\RR^k$-flat (resp. $\Rr{1}$-flat) representation as a fraction at $a$.
$f$ is said to be $\RR^k$-flat ($\Rr{1}$-flat) if it is locally $\RR^k$-flat ($\Rr{1}$-flat) at every point of $X$.
\end{defi}

\begin{defi}
An $\RR^k$-flat function $f$ is said to have a global representation if there exist two regular functions $p,q\in \RR(X)$ such that $\frac{p}{q}$ is an $\RR^k$-flat representation of $f$ at every point of $X$.
\end{defi}
It is natural to ask if every $\RR^k$-flat function admits a global representation. We are unaware whether the answer to this question is affirmative although in a moment we will prove something only slightly weaker. Before that we need to state a lemma a proof of which will be given at the beginning of the next section:
\begin{lem}\label{to_be_proven_later}
Let $\frac{p}{q}$ be an $\RR^k$-flat representation on some Zariski open set $A$ on which derivation is defined. Then $v_{q,A}(p)>1$.
\end{lem}

\begin{cor}
If $f$ is locally $\RR^k$-flat at $a\in X$ then it admits an $\RR^k$-flat representation $f=\frac{p}{q}$ at $a$ with $p$ and $q$ both belonging to $\RR(X)$.
\begin{proof}
Take $f=\frac{p}{q}$ to be any $\RR^k$-flat representation of $f$ on some neighbourhood $A$ of $a$ on which derivation is defined. Take $r\in \RR(X)$ to be such that $pr,qr\in\RR(X)$ and $\Z(r)\subset X\backslash A$. We claim that $\frac{pr}{qr}$ is an $\RR^k$-flat representation of $f$ on $A$. A direct computation shows that:
\begin{equation*}
    \left|\frac{pr\left(\pdv{qr}{x_i}\right)^k}{(qr)^{k+1}}\right|=\left|\frac{p}{q}\left(\frac{\pdv{q}{x_i}}{q}+\frac{\pdv{r}{x_i}}{r}\right)^k\right|
    \leq 2^{k-1}\left|\frac{p\left(\pdv{q}{x_i}\right)^k}{q^{k+1}}\right|+2^{k-1}\left|\frac{p\left(\pdv{r}{x_i}\right)^k}{qr^k}\right|
\end{equation*}
where the first summand approaches zero on $\Z(q)\cap A$ by the fact that $\frac{p}{q}$ is an $\RR^k$-flat representation and the second one does so by Lemma \ref{to_be_proven_later}.
\end{proof}
\end{cor}

\begin{prop}\label{local_to_global_rk}
Let $f$ be a function locally $\RR^k$-flat at $a\in X$. Then there exists $g\in \RR(X)$ with $g(a)\neq 0$ such that $gf$ is $\RR^k$-flat and has a global representation.
\begin{proof}
Let $\frac{p}{q}$ be an $\RR^k$-flat representation of $f$ on a neighbourhood $A$ of $a$ with $p,q\in\RR(X)$. Let $g\in\RR(X)$ satisfy $\Z(g)=X\backslash A$. Choose any $a'\in X\backslash A$ and consider some local system of parameters $x_1.\dots,x_n$ at $a'$ together with a regular neighbourhood $B$ of $a'$. The function $\frac{p||dq_{x_1,\dots,x_n}||^k}{q^{k+1}}$ is semialgebraic and it approaches $0$ at every point of $A\cap B\cap \Z(q)$. This way by Łojasiewicz's inequality (\cite[Proposition 2.6.4]{Bochnak}) we can find a large exponent $N$ such that $g^N\frac{p||dq_{x_1,\dots,x_n}||^k}{q^{k+1}}$ approaches zero at every point of $B\backslash A$. In particular $\frac{pg^N}{q}$ is an $\RR^k$-flat representation at every point of $B\cup A$. By Noetherianity of the Zariski topology we can find an exponent $N$ working for all the points at once.
\end{proof}
\end{prop}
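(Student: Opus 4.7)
The plan is to start from a local rk-flat representation $f = q/r$ on a regular neighbourhood $A$ of $x$, and multiply $f$ by a power of a carefully chosen regular function that vanishes on $X\setminus A$. First I would produce a regular function $g_0\in\RR(X)$ with $\Z(g_0)=X\setminus A$ and $g_0(x)\ne 0$. Such a $g_0$ exists on any real affine variety: $X\setminus A$ is Zariski closed, so $\I(X\setminus A)$ admits finite generators $h_1,\dots,h_m\in\RR(X)$, and $g_0:=h_1^2+\cdots+h_m^2$ cuts out $X\setminus A$ as a real zero set. Since $x\notin X\setminus A$, some $h_i(x)\ne 0$, so $g_0(x)>0$. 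After multiplying $q$ and $r$ simultaneously by a sufficiently high power of $g_0$ we may assume that $q,r$ are actual elements of $\RR(X)$ (this does not change the rational function $f = q/r$, and it does not destroy the rk-flatness on $A$ since on $A$ the multiplier is a nowhere-vanishing bounded regular function).

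Next I would show that for suitable $N$, the pair $(g_0^N q, r)$ is an rk-flat representation of $g_0^N f$ at every point of $X$. At a point $x'\in A$ this is clear: the function $g_0^N q \|Dr\|^k/r^{k+1} = g_0^N\cdot\big(q\|Dr\|^k/r^{k+1}\big)$ tends to $0$ on $\Z(r)\cap A$ because the second factor does (by hypothesis) and $g_0^N$ is locally bounded, independently of $N$. At a point $x'\in X\setminus A$, pick a regular neighbourhood $B$ of $x'$ with parameters $x_1,\dots,x_n$. The function $q\|Dr_{x_1,\dots,x_n}\|^k/r^{k+1}$ is semialgebraic on $B$ and, by rk-flatness on $A$, tends to $0$ on $A\cap B\cap \Z(r)$. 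Since $g_0$ vanishes on $B\setminus A$, \L ojasiewicz's inequality furnishes an exponent $N(x')$ so that $g_0^{N(x')}\cdot q\|Dr\|^k/r^{k+1}$ tends to $0$ at every point of $B\cap\Z(g_0^{N(x')}r)=B\cap(\Z(r)\cup(X\setminus A))$, which is exactly the rk-flatness of the representation $g_0^{N(x')}q/r$ on $B$.

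Finally I would upgrade the local exponents to a single global exponent $N$ by Noetherianity. For each integer $N$ let $Z_N\subset X$ be the set of points at which the representation $(g_0^N q, r)$ fails to be rk-flat; by the previous paragraph this is a Zariski-closed (in fact definable) subset of $X\setminus A$, and the $Z_N$ form a descending chain as $N$ grows. By the Noetherian property this chain stabilises, and by the local argument the stabilised set is empty. Setting $g:=g_0^N$ for this $N$ gives $g(x)\ne 0$ and makes $gf = g_0^N q/r$ globally rk-flat.

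The only real subtlety is the uniformising step: passing from the pointwise \L ojasiewicz exponent $N(x')$ to a single $N$ that works on all of $X\setminus A$. The argument above via Noetherianity is clean, but one could equivalently cover $X\setminus A$ by finitely many regular neighbourhoods $B$ (possible since $X$ is an affine variety) and take the maximum of the finitely many local exponents; either route is the main non-routine step and the rest is formal manipulation of the rk-flat condition.
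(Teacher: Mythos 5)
Your argument follows the paper's proof essentially step for step: take $g_0\in\RR(X)$ vanishing exactly on $X\setminus A$, apply Łojasiewicz near each point of $X\setminus A$ to obtain a local exponent, then uniformise via finitely many regular Zariski neighbourhoods. The two details you add -- that $g_0(x)\neq 0$ is automatic since $x\in A$, and that one should first clear denominators so that $q,r\in\RR(X)$ -- are both needed for the conclusion as literally stated (the paper's $\frac{qg^N}{r}$ is otherwise not a pair of functions in $\RR(X)$), and the paper leaves them implicit.

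The one place that needs more care is the clearing step. That replacing $(q,r)$ by $(g_0^Mq,\,g_0^Mr)$ preserves the r$k$-flat estimate on $A$ is true, but the reason offered -- ``on $A$ the multiplier is a nowhere-vanishing bounded regular function'' -- is not a proof. After the substitution the quantity to control on $\Z(r)\cap A$ is
\begin{equation*}
\frac{q\,||D(g_0^Mr)||^k}{g_0^{Mk}\,r^{k+1}},
\end{equation*}
and expanding $D(g_0^Mr)=g_0^M Dr+r\,D(g_0^M)$ produces, besides the original flat quantity $\frac{q||Dr||^k}{r^{k+1}}$, an extra contribution of size (up to constants) $\frac{|q|}{|r|}\cdot\frac{||Dg_0||^k}{|g_0|^k}$. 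This tends to $0$ on $\Z(r)\cap A$ only because $q/r\to 0$ there -- a real fact (it is Remark~\ref{rkflat_is_continuous}, and can also be extracted from the Łojasiewicz gradient inequality), but not a consequence of $g_0$ being a unit on $A$. Finally, your claim that $Z_N$ is Zariski-closed is doubtful, as the r$k$-flat failure set is a priori only semialgebraic; the finite Zariski cover you offer as an alternative is the correct mechanism and is what the paper's invocation of Noetherianity amounts to.
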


\begin{cor}
Every $\RR^k$-flat function is a finite sum of $\RR^k$-flat functions admitting global representations.
\begin{proof}
Let $f$ be $\RR^k$-flat. Consider the ideal 
\begin{equation*}
    I=\{g\in\RR(X): \text{ $fg$ is a sum of $\RR^k$-flat functions with global representations}\}
\end{equation*}
By the last proposition $\Z(I)=\emptyset$, so the Weak Nullstellensatz finishes the proof.
\end{proof}
\end{cor}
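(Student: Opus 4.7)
The plan is to mimic the strategy suggested by Proposition \ref{local_to_global_rk}: introduce the set
\begin{equation*}
I \;=\; \{\,g\in \RR(X) \;:\; gf \text{ is a sum of globally r}k\text{-flat functions}\,\},
\end{equation*}
verify that $I$ is an ideal of $\RR(X)$, use the previous proposition to see that $\Z(I) = \emptyset$, and then invoke the Weak Nullstellensatz to conclude $1 \in I$.

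First I would check that $I$ is an ideal. Closure under addition is immediate, since concatenating two finite sums of globally r$k$-flat functions gives another such sum. For closure under multiplication by $\RR(X)$, suppose $g \in I$, so that $gf = \sum_i q_i / r_i$ with each $q_i / r_i$ a globally r$k$-flat representation, meaning $q_i \|Dr_i\|^k / r_i^{k+1}$ tends to $0$ at every point of $\Z(r_i)$. For any $h \in \RR(X)$ one has $hgf = \sum_i (h q_i)/r_i$, and since $h$ is regular it is continuous, hence locally bounded on $X$; therefore $(hq_i)\|Dr_i\|^k / r_i^{k+1} = h \cdot q_i\|Dr_i\|^k / r_i^{k+1}$ still tends to $0$ at every point of $\Z(r_i)$. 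Thus $(hq_i)/r_i$ remains a globally r$k$-flat representation, and $hg \in I$.

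Next, by Proposition \ref{local_to_global_rk}, for every $x \in X$ one can produce $g_x \in \RR(X)$ with $g_x(x) \neq 0$ such that $g_x f$ is globally r$k$-flat, in particular a (one-term) sum of globally r$k$-flat functions. Hence $g_x \in I$ and $g_x(x) \neq 0$, which shows $\Z(I) = \emptyset$. The Weak Nullstellensatz for ideals of regular functions on a non-singular real affine variety then forces $I = \RR(X)$, whence $1 \in I$ and $f$ itself is a finite sum of globally r$k$-flat functions.

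I expect no significant obstacle: the only place that requires a moment's thought is the closure of $I$ under multiplication by $\RR(X)$, which hinges on the fact that the defining condition of a globally r$k$-flat representation is preserved when the numerator is scaled by a continuous (hence locally bounded) factor, leaving the denominator $r$ unchanged. Everything else is a direct bookkeeping application of Proposition \ref{local_to_global_rk} and the Nullstellensatz.
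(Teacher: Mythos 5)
Your proposal is correct and follows the same argument as the paper: define the set $I$, check $\Z(I)=\emptyset$ via Proposition \ref{local_to_global_rk}, and apply the Weak Nullstellensatz. The only difference is that you explicitly verify that $I$ is an ideal, a step the paper takes for granted; your observation that scaling the numerator $q_i$ by a locally bounded regular $h$ preserves the defining condition of a globally r$k$-flat representation is exactly the right justification.
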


\subsection{Plan of the paper}
After these preparations we can begin the main course of the paper. Almost all the material here is devoted to the proof of Theorem \ref{main_theorem}, the remaining results come more as byproducts of the reasoning. Our general approach somewhat resembles the approach of Koll\'ar and Nowak presented in \cite{real_and_p-adic}, up to the fact that dealing with higher classes of smoothness requires much more subtleties. The main idea is that we want to prove that Theorem \ref{main_theorem} holds for functions which are $\RR^k$-flat, and from that extrapolate the result to all $k$-regulous functions. To achieve this we follow the following plan:

In Section \ref{Divisorial valuations} we prove that every $\RR^k$-flat function is $k$-regulous. The proof requires us to study a certain interesting construction associating a set of valuations to a rational locally bounded function. We then derive an inequality regarding such valuations, from which the conclusion of this part follows.

In Section \ref{Extending RR^k-flat functions} we show that every $\RR^k$-flat function can be extended to a sum of $\RR^k$-flat functions on an ambient variety. The proof is heavily inspired by the proof of \cite[Proposition 10]{real_and_p-adic} and is completely explicit. Unfortunately the current general scenario forces the reasoning to be way more technical than the one in \cite{real_and_p-adic}.

In Section \ref{The differential multiplicity and sums of squares} we study some behaviour of regulous functions when composed with a sequence of blowups. This lays foundations for the reasoning of Section \ref{RR^k-flat decomposition in dimension 2}. At the end of the section we diverge from the main course of the proof for a moment to prove Theorems \ref{sos1} and \ref{sos2}.

In Section \ref{RR^k-flat decomposition in dimension 2} we restrict ourselves to two dimensions to show that every $k$-regulous function on a two-dimensional variety can be decomposed as a finite sum of $\RR^k$-flat functions. The proof relies on a close inspection of how the problem behaves after applying a resolution of singularities of the $k$-regulous function. The decomposition is a recursive algorithm, producing a summand for each step of the resolution. This section is definitely the most technical part of the paper.

In two short Sections \ref{An additional result} and \ref{Discussion} we prove Theorem \ref{additional_theorem} using the machinery developed so far, and then we discuss possible further improvements and generalisations of the results.

Let us formally derive the proof of Theorem \ref{main_theorem} using the announced results:
\begin{proof}[Proof of Theorem \ref{main_theorem}]
Let $X$ be a closed two-dimensional subvariety of a variety $Y$ and let $f\in\RR^k(X)$. By Theorem \ref{rk_flat_decomposition} $f$ can be written as a sum of $\RR^k$-flat functions on $X$. By Corollary \ref{rk_flat_extension} each of them can be extended to a sum of $\RR^k$-flat functions on $Y$. Lastly by Corollary \ref{rk_falt_is_kregulous} all of the extending functions are $k$-regulous on $Y$, so their sum is the extension of $f$ we are looking for.
\end{proof}
\section{Divisorial valuations}\label{Divisorial valuations}

\subsection{The valuation inequality}
We begin by introducing the central object of study of this section:
\begin{defi}
Let $X$, $X_\sigma$ be varieties and let $\sigma:X_\sigma\rightarrow X$ be a proper, birational morphism. Let $D$ be a non-singular irreducible codimension one subvariety of $X_\sigma$. The localisation $\RR(X_\sigma)_D$ induces a discrete valuation on $\R(X_\sigma)\cong\R(X)$, which is denoted by $\ord_D$. Every valuation on $\R(X)$ constructed in such a way is called a \emph{divisorial valuation}.
\end{defi}

We now describe a certain scenario in which such valuations arise naturally.

Let $f\in\RR(X)$ be a simple normal crossing on $X$, we would like to describe the function $v_{f,X}(g)$ as $g$ ranges through $\RR(X)$. Notice that a necessary condition for $\alpha$ to be such that $\frac{g}{|f|^\alpha}\in L.B.(X)$ is that the order of $g$ at every irreducible component $D$ of $\Z(f)$ is at least $\alpha$ times the order of $f$ at the component, otherwise the fraction would diverge at a generic point of $D$. On the other hand, if the condition
\begin{equation*}
    \alpha \ord_D(f)\leq \ord_D(g)
\end{equation*}
is satisfied for every such $D$ then analysing what happens locally at every point of $X$ we see that indeed $\frac{g}{|f|^\alpha}\in L.B.(X)$. This allows us to characterise the function $v_{f,X}$ using divisorial valuations:
\begin{equation}\label{rees_decomposition}
    v_{f,X}(g)=\min_i \frac{\ord_{D_i}(g)}{\ord_{D_i}(f)}
\end{equation}
where the minimum is taken over all valuations induced by the irreducible components of $\Z(f)$.

Consider now a more general question of describing $v_{f,X}(g)$ as $g$ ranges through $\RR(X)$ and $f$ is a non-zero rational locally bounded function on $X$. In this case by the resolution of singularities we can find a proper birational morphism $\sigma:X_\sigma\rightarrow X$
such that $f\circ \sigma$ is regular and a simple normal crossing on $X_\sigma$. Since $\sigma$ is proper a short topological consideration shows that it induces an isomorphism between the rings $\Rr{0}(X)$ and $\Rr{0}(X_\sigma)$. Hence the equation (\ref{rees_decomposition}) applies also here, this time the set of valuations ranges through the orders at the irreducible components of $\Z(f\circ\sigma)$. Let us emphasise this as an observation:

\begin{obs}
For a given $f\in\Rr{0}(X)\backslash\{0\}$ there exists a finite set of divisorial valuations $\ord_{D_1},\dots,\ord_{D_k}$ such that the equation (\ref{rees_decomposition}) holds for every $g\in\RR(X)$.
\end{obs}

As a trivial corollary we get $v_{f,X}(g)\in \mathbb{Q}$.

The valuations constructed this way are the Rees valuations associated to the ideal $(f\circ\sigma)$ in the ring $\RR(X_\sigma)$ (see \cite[Example 10.1.2]{swanson-huneke}). We cannot guarantee that the constructed set is minimal in the sense that after deleting any of them the equality fails for some $g\in\RR(X)$, but we can always alter it. 

Assume that we have deleted the unnecessary valuations. Then, by \cite[Theorem 10.1.6]{swanson-huneke} the set $\ord_{D_1},\dots,\ord_{D_k}$ is uniquely determined by $f$; it does not depend in any way on the resolution of singularities we have used to find it.

\begin{defi}
The valuations $\ord_{D_i}$ in the minimal decomposition (\ref{rees_decomposition}) are called the divisorial valuations associated to $f$. 
\end{defi}

Let us now see a few examples:

\begin{ex}
The function $x^2+y^2\in\RR(\R^2)$ admits precisely one associated valuation $\ord_{(x,y)}$. In general functions of the form $x^{2a}+y^{2b}$ admit precisely one associated monomial valuation with weights $x\equiv b,y\equiv a$ (up to normalisation). An example of a function with two associated valuations is $(x^2+y^2)(x^2+y^4)$, its valuations come from the two factors. A less trivial example is the following:
\begin{equation*}
    (x^2+y^2)(x^2+z^2)\in\RR(\R^3)
\end{equation*}
There are three monomial valuations associated to it, the orders at $(x,y),(x,z)$ and at the point $(x,y,z)$.
\end{ex}

It is natural to ask about the converse, i.e. whether every divisorial valuation arises as a valuation associated to some locally bounded $f$. It turns out that it does so:

\begin{prop}\label{existance_of_q_prop}
For every divisorial valuation $\ord_D$ there exists $f\in\Rr{0}(X)$ such that $\ord_D$ is the only valuation associated to $f$, moreover it can be chosen so that $\ord_D(f)=2$ and $f$ is positive semi-definite.
\begin{proof}
Take a variety $X_\sigma$ with a proper birational morphism $\sigma:X_\sigma\rightarrow X$ such that $\ord_D$ is induced as the order at a non-singular divisor $D$ of $X_\sigma$. Choose $h$ to be the sum of squares of generators of the ideal $\I(D)$ in $\RR(X_\sigma)$. Then we can take $f:=h\circ\sigma^{-1}\in\Rr{0}(X)$. The equation
\begin{equation*}
    v_{f,X}(g)=v_{h,X_\sigma}(g\circ\sigma)=\frac{\ord_D(g)}{2}
\end{equation*}
becomes clear after analysing what happens locally at every point of $D$.
\end{proof}
\end{prop}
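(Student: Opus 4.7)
The plan is to realize $d$ as a smooth divisor on some blowup of $X$ and take $f$ to be the pushdown of a natural psd regular function on that blowup whose zero locus is precisely $d$ with multiplicity $2$. Then $\ord_d$ will appear as the sole r-valuation simply because the zero locus of the lift of $f$ has only one irreducible component.

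First I would pick a multiblowup $\sigma: X_\sigma \to X$ on which $d$ is realized as a smooth hypersurface. Let $g_1, \dots, g_m \in \RR(X_\sigma)$ generate the ideal $\I(d)$, set $F := g_1^2 + \cdots + g_m^2 \in \RR(X_\sigma)$, and let $f$ be the rational function on $X$ defined by $f \circ \sigma = F$ (which makes sense since $\sigma$ is birational). Local boundedness of $f$ follows from the regularity of $F$ together with the properness of $\sigma$, so $f \in \Rr{0}(X)$, and $f$ is psd because $F$ manifestly is. To understand $F$ near $d$, I would use that in a regular neighbourhood $A$ of any point of $d$ the ideal $\I(d)|_A$ is principal, generated by a local parameter $r$, so $g_i = r h_i$ for some $h_i \in \RR(A)$ and therefore $F = r^2 (h_1^2 + \cdots + h_m^2)$. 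Since the $g_i$ generate $\I(d)$, at least one $h_i$ is nonzero at each point of $d \cap A$, making $u := \sum_i h_i^2$ a strictly positive regular function in a neighbourhood of $d$. Globally $\Z(F) = \bigcap_i \Z(g_i) = \Z(\I(d)) = d$, so $F$ vanishes exactly along $d$ with multiplicity $2$.

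Finally, to identify the r-valuations of $f$, I would compute $v_{f,X}(g)$ for arbitrary $g \in \RR(X)$. By properness of $\sigma$, $g/|f|^\alpha \in L.B.(X)$ iff $(g \circ \sigma)/|F|^\alpha \in L.B.(X_\sigma)$; local boundedness of this quotient is automatic at points where $F$ is nonzero, so the only constraint on $\alpha$ comes from points of $d$, where the factorization above gives $|F|^\alpha \sim r^{2\alpha}$. This yields
\begin{equation*}
    v_{f,X}(g) = \frac{\ord_d(g)}{2},
\end{equation*}
exhibiting $\ord_d$ as the unique r-valuation associated with $f$ with $\ord_d(f) = 2$. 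The only subtlety I anticipate is confirming that no further blowup introduces spurious valuations, but this is immediate from the factorization $F = r^2 u$ with $u$ a regular unit near $d$, which shows that the divisor $(F)$ is already simple normal crossing on $X_\sigma$.
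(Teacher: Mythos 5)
Your proposal is correct and follows essentially the same approach as the paper: realize $d$ on a multiblowup, take $f$ to be the pushdown of the sum of squares of generators of $\I(d)$, and check the valuation formula locally. You simply make explicit the local factorization $F = r^2(\sum h_i^2)$ that the paper leaves to the reader with the phrase ``becomes clear after analysing what happens locally at every point of $d$.''
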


\begin{prop}\label{some_ineq_deriv}
Suppose that $X$ is regular for some system of parameters $x_1,\dots,x_n$. Let $f\in\RR(X)\backslash\{0\}$ and let $\ord_D$ be a divisorial valuation with $\ord_D(f)>0$. Then
$\ord_D(f)> \ord_D(\pdv{f}{x_i})$ holds for some $i$.
\begin{proof}
Take $g\in\Rr{0}(X)$ such that $\ord_D$ is the only valuation associated to $g$. Define
\begin{equation*}
    c:=v_{g,X}(||df||)
\end{equation*}
It is clear that $c=\min_i v_{g,X}(\pdv{f}{x_i})$, so our goal is to show that $v_{g,X}(f)>c$. If $c=0$ the conclusion is trivial, so we assume not.

Choose a smooth bounded semialgebraic curve $\xi:(0,1)\rightarrow \dom(g)$ with bounded derivative, with the limit of $g\circ\xi$ at zero being equal to zero. Then the limits of both $f\circ\xi$ and $||df||\circ\xi$ at zero must also be zero. We may restrict ourselves to a small subinterval $(0,\epsilon)$ such that $||df||\circ \xi$ is weakly increasing there. By the mean value theorem we get:
\begin{align*}
    |f(\xi(t))|\leq Ct\sup_{0< t_1\leq t}||df||(\xi(t_1))&\leq Ct||df||(\xi(t))\\
    \frac{|f|}{|g|^c}(\xi(t))\leq Ct\frac{||df||}{|g|^c}(\xi(t))&\xrightarrow[]{t\to 0} 0
\end{align*}
Since the choice of the curve was arbitrary the Curve Selection Lemma implies that $v_{g,X}(f)>c$.
\end{proof}
\end{prop}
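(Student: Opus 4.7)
My plan is to translate the divisor-theoretic inequality $\ord_d(f)>\min_i \ord_d(\pdv{f}{x_i})$ into a Łojasiewicz-type bound on a semialgebraic set, and then verify that bound along every real arc using the Curve Selection Lemma together with Cauchy's mean value theorem. The motivating picture is the smooth one: if $d$ were cut out by a single regular function $t$ on a smooth model and $f=t^m u$ with $u$ a unit at a generic point of $d$, then $\pdv{f}{t}=m t^{m-1}u + t^m \pdv{u}{t}$ vanishes to order exactly $m-1$. Since $d$ may only live on a blowup of $X$, we cannot use such coordinates directly, so this ``drop by one'' has to be re-proved analytically.

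The reduction step is to single out the divisor via an auxiliary function. Using the preceding proposition, I pick $g\in\Rr{0}(X)$ whose only associated r-valuation is $\ord_d$. Then for every $h\in\RR(X)$ one has $v_{g,X}(h)=\ord_d(h)/\ord_d(g)$, so the desired inequality is equivalent to $v_{g,X}(f)>c$, where $c:=v_{g,X}(||Df||)$ and $||Df||$ is the (semialgebraic, locally bounded) norm of the coordinate-induced Jacobian. The case $c=0$ is immediate, because $\ord_d(f)>0$ already gives $v_{g,X}(f)>0$; so I may assume $c>0$, which in particular forces every $\pdv{f}{x_i}$ to vanish on $\Z(g)$.

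By semialgebraicity and the Curve Selection Lemma, proving $v_{g,X}(f)>c$ reduces to showing that along every smooth bounded semialgebraic curve $\xi:(0,1]\to X\setminus\indet(g)$ with $g\circ\xi(t)\to 0$, one has $|f(\xi(t))|/|g(\xi(t))|^c\to 0$. Since $c>0$, both $f\circ\xi$ and $||Df||\circ\xi$ tend to zero; being semialgebraic in $t$, the latter is monotone on some $(0,\varepsilon)$, where I may take it weakly increasing. Applying Cauchy's mean value theorem to $f\circ\xi$, bounding $(f\circ\xi)'(s)$ by $||Df||(\xi(s))$ times the norm of $\xi'$ (which is bounded on compact subintervals), and replacing the sup on $(0,t]$ by the endpoint value thanks to monotonicity, I obtain $|f(\xi(t))|\leq C t\,||Df||(\xi(t))$ for small $t$. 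Dividing by $|g(\xi(t))|^c$ and using that $||Df||/|g|^c$ is locally bounded by the very definition of $c$, the quotient is $O(t)\to 0$.

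The main obstacle is checking that the chain rule and the Cauchy estimate behave correctly in the coordinate-induced framework: one must interpret $(f\circ\xi)'(s)=Df(\xi(s))\cdot\xi'(s)$ intrinsically, and ensure that the norm $||Df||$ appearing above is comparable (up to bounded factors) with any other natural norm of the derivative coming from an ambient embedding. Both points are handled by working inside a regular neighbourhood for $x_1,\dots,x_n$ and invoking Observation~\ref{equivalence_of_parameters}, after which the curve estimate is uniform and the argument closes.
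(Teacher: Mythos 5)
Your proposal follows essentially the same route as the paper: pick $g\in\Rr{0}(X)$ whose sole r-valuation is $\ord_d$, reduce to $v_{g,X}(f)>c$ with $c=v_{g,X}(\|Df\|)$, dispose of $c=0$, and otherwise verify the bound along arcs via the Curve Selection Lemma and the mean-value inequality, using monotonicity of $\|Df\|\circ\xi$ to replace the sup by the endpoint value. One small imprecision worth flagging: you write that $c>0$ forces both $f\circ\xi\to0$ and $\|Df\|\circ\xi\to0$, but $c>0$ only gives the second; the vanishing of $f$ along the arc comes from the hypothesis $\ord_d(f)>0$, which yields $v_{g,X}(f)>0$ and hence $|f|\lesssim|g|^{\varepsilon}$ near $\Z(g)$ -- without this, $\|Df\|\to0$ would only make $f\circ\xi$ Cauchy, not force its limit to be zero, and the estimate $|f(\xi(t))|\leq Ct\,\|Df\|(\xi(t))$ would fail. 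With that reason supplied the argument is complete and matches the paper's.
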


\begin{proof}[Proof of Lemma \ref{to_be_proven_later}]
Let $\ord_D$ be a divisorial valuation associated to $q\vert_A$. By the definition of $\RR^k$-flatness we have 
\begin{multline*}
    (k+1)\ord_D(q)<\min_i \ord_D\left(p\left(\pdv{q}{x_i}\right)^k\right)=\ord_D(p)+\min_i k\ord_D\left(\pdv{q}{x_i}\right)<\\
    <\ord_D(p)+k\ord_D(q)\\
\end{multline*}
so $\ord_D(p)>\ord_D(q)$. As this is true for every such valuation, the conclusion follows.
\end{proof}

The main tool of the section is the following:
\begin{thm}[valuation inequality]\label{valuation_inequality}
Suppose that $X$ is regular for some system of parameters $x_1,\dots,x_n$. Let $f\in\RR(X)$, $h\in \R(X)^*$ and let $\ord_D$ be a divisorial valuation associated to $f$. Then
\begin{equation*}
    \min_i \ord_D\left(\pdv{h}{x_i}\right)-\ord_D(h)\geq \min_i \ord_D\left(\pdv{f}{x_i}\right)-\ord_D(f).
\end{equation*}
\end{thm}

Before presenting the proof of the inequality we exhibit the important corollaries it brings about.
\begin{cor}
Every $\Rr{1}$-flat function $f$ belongs to $\Rr{1}(X)$.
\begin{proof}
Choose a Zariski open subset $A\subset X$ on which derivation is defined and where $f$ admits an $\Rr{1}$-flat representation $f=\frac{p}{q}$. On $A$, $v_{q,A}$ admits a decomposition into divisorial valuations associated to $q$. For every such valuation $\ord_D$, for every $i$ Theorem \ref{valuation_inequality} gives us
\begin{equation*}
   \ord_D\left(\pdv{p}{x_i}\right)-\ord_D(p)\geq \min_j\ord_D\left(\pdv{q}{x_j}\right)-\ord_D(q) .
\end{equation*}
On the other hand by our assumption on $f$
\begin{equation*}
    \min_j \ord_D(p)+\ord_D\left(\pdv{q}{x_j}\right)\geq 2\ord_D(q)
\end{equation*}
so altogether
\begin{equation*}
    \ord_D\left(\pdv{p}{x_i}\right)\geq \ord_D(q).
\end{equation*}
This implies that $\pdv{p}{x_i}/q\in\Rr{0}(A)$ and hence 
\begin{equation*}
    \pdv{f}{x_i}=\frac{\pdv{p}{x_i}}{q}-\frac{p\pdv{q}{x_i}}{q^2}\in \Rr{0}(A).
\end{equation*}
As $X$ can be covered by such sets $A$ the conclusion follows.
\end{proof}
\end{cor}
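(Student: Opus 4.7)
The plan is to reduce the problem to showing that every partial derivative of $f$ lies in $\Rr{0}(X)$, since by Observation \ref{sub1_to_sub0} this is exactly what is required for $f\in\Rr{1}(X)$. The r$\underline{1}$-flat hypothesis is local, so I would work in a single regular neighborhood $A\subset X$ equipped with parameters $x_1,\dots,x_n$ and a representation $f=q/r$ with $q,r\in\RR(A)$ satisfying $v_{r,A}(q||Dr||)\geq 2$, and then cover $X$ by finitely many such sets.

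Expanding via the quotient rule gives
$$\pdv{f}{x_i}=\frac{\pdv{q}{x_i}}{r}-\frac{q\pdv{r}{x_i}}{r^2}.$$
The second summand is dominated in absolute value by $|q|\cdot||Dr||/r^2$, which is locally bounded straight from the r$\underline{1}$-flat hypothesis. The substantive part of the argument is therefore to show $\pdv{q}{x_i}/r\in\Rr{0}(A)$.

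For this I would pass to the r-valuations $\ord_d$ associated with $r$ and verify $\ord_d(\pdv{q}{x_i})\geq \ord_d(r)$ for each such $d$. Two ingredients should combine cleanly: the r-valuation inequality (Proposition \ref{r-valuation_inequality}), applied with $h=q$ and $f=r$, yields
$$\min_i\ord_d\!\left(\pdv{q}{x_i}\right)-\ord_d(q)\geq \min_j\ord_d\!\left(\pdv{r}{x_j}\right)-\ord_d(r),$$
while the r$\underline{1}$-flat condition $v_{r,A}(q||Dr||)\geq 2$, decomposed through (\ref{rees_decomposition}) into the r-valuations associated with $r$, translates into $\ord_d(q)+\min_j\ord_d(\pdv{r}{x_j})\geq 2\ord_d(r)$. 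Summing the two and cancelling produces $\min_i\ord_d(\pdv{q}{x_i})\geq \ord_d(r)$, so $\pdv{q}{x_i}/r$ has non-negative order at every r-valuation associated with $r$ and therefore belongs to $\Rr{0}(A)$.

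The main place where one could stumble is the bookkeeping around the r-valuation inequality: Proposition \ref{r-valuation_inequality} is stated for one r-valuation at a time, and one must be careful that the same index $j$ realising $\min_j\ord_d(\pdv{r}{x_j})$ appears on both sides of the summation (this is automatic because both inequalities are lower bounds taken pointwise at the same divisor). Once this is organised, the computation above finishes the proof without further input.
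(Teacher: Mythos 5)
Your proposal is correct and follows the paper's own argument essentially verbatim: reduce via Observation \ref{sub1_to_sub0} to checking that each $\pdv{f}{x_i}\in\Rr{0}(A)$, handle the summand $q\pdv{r}{x_i}/r^2$ directly from the r$\underline{1}$-flat bound, and control $\pdv{q}{x_i}/r$ by combining Proposition \ref{r-valuation_inequality} with the decomposition of $v_{r,A}$ into r-valuations.
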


\begin{cor}\label{rk_falt_is_kregulous}
Every $\RR^k$-flat function $f$ is $k$-regulous. 
\begin{proof}
We work by induction on $k$. The case $k=1$ is treated analogously to the preceding proof. Assume $k>1$ and as before choose $A$ to be regular for some system of parameters, $\frac{p}{q}$ to be an $\RR^k$-flat representation of $f$ on $A$ and let $\ord_D$ be a valuation associated to $q\vert_A$.
\begin{claim*}
Every partial derivative of $f$ is $k-1$-regulous on $A$.
\begin{proof}
We have
\begin{equation*}
    \pdv{f}{x_i}=\frac{\pdv{p}{x_i}}{q}-\frac{p\pdv{q}{x_i}}{q^2}
\end{equation*}
The second summand is $\RR^{k-1}$-flat by the fact that $f$ is $\RR^k$-flat, so it is $k-1$-regulous by induction. To prove the same for the first one, first apply Theorem \ref{valuation_inequality} to obtain for every $i$:
\begin{equation*}
    \ord_D\left(\pdv{p}{x_i}\right)-\ord_D(p)\geq \min_j\ord_D\left(\pdv{q}{x_j}\right)-\ord_D(q).
\end{equation*}
By the assumption on the $\RR^{k}$-flatness of $\frac{p}{q}$ we have
\begin{equation*}
    \ord_D(p)+k\min_j \ord_D\left(\pdv{q}{x_j}\right)>(k+1)\ord_D(q).
\end{equation*}
The two equations together imply
\begin{equation*}
    \ord_D\left(\pdv{p}{x_i}\right)+(k-1)\min_j \ord_D\left(\pdv{q}{x_j}\right)>k\ord_D(q).
\end{equation*}
As this is true for every valuation $\ord_D$ associated with $q\vert_A$, we get that
\begin{equation*}
    v_{q,A}\left(\pdv{p}{x_i}\left(\pdv{q}{x_j}\right)^{k-1}\right)>k
\end{equation*}
holds for every $j$ and hence that $\frac{\pdv{p}{x_i}}{q}$ is $\RR^{k-1}$-flat. This proves the claim and the conclusion follows.
\end{proof}
\end{claim*}
\end{proof}
\end{cor}

\subsection{Proof of the valuation inequality}
We now head towards the proof of Theorem \ref{valuation_inequality}. We introduce the following notation:

\begin{defi}
If $\psi,\xi:(0,1)\rightarrow \R$ then we write $\psi\preceq \xi$ if and only if there exists a constant $C>0$ such that $|\psi|\leq C|\xi|$ holds on some small subinterval $(0,\epsilon)$. Notice that whenever $\psi$ and $\xi$ are semialgebraic, they are comparable in this sense.
\end{defi}

\begin{lem}
\label{r_polynomial_lemma}
Let $k\in\mathbb{N}$ and $\psi_0,\dots,\psi_k:(0,1)\rightarrow \R$ be semialgebraic functions. Then for all but finitely many $r\in\R$ we have
\begin{equation*}
    \max(|\psi_0|,\dots,|\psi_k|)\preceq \psi_0+r\psi_1+r^2\psi_2+\dots+r^k\psi_k.
\end{equation*}

\begin{proof}
The set $\{\psi_0,\dots,\psi_k\}$ must contain a maximal element in the order $\preceq$, say $\psi_{k_0}$. Consider
\begin{equation*}
    \lim_{t\rightarrow0}\frac{\psi_0(t)}{\psi_{k_0}(t)}+r\frac{\psi_1(t)}{\psi_{k_0}(t)}+\dots+r^k\frac{\psi_k(t)}{\psi_{k_0}(t)}.
\end{equation*}
Since $\forall_i\lim_{t\rightarrow 0} \psi_i(t)/\psi_{k_0}(t)\in\R$ the expression above is a polynomial as a function of r, moreover its coefficient next to $r^{k_0}$ is equal to 1 so it is non-zero. Therefore for all but finitely many $r$ the limit is equal to a real number different than zero. In other words for every such $r$
\begin{equation*}
    \psi_{k_0}\preceq \psi_0+r\psi_1+\dots+r^k\psi_k.
\end{equation*}
On the other hand once again because of the maximality of $\psi_{k_0}$ we have
\begin{equation*}
    \max(|\psi_0|,\dots,|\psi_k|)\preceq \psi_{k_0}.
\end{equation*}
Combining the two equations finishes the proof.
\end{proof}
\end{lem}

\begin{lem}\label{valuation_ineq_auxiliary}
Suppose that $X$ is regular for a system of parameters $x_1,\dots,x_n$. Let $f,g,h\in\RR(X)\backslash \{0\}$ and let $\ord_D$ be a divisorial valuation with $\ord_D(f)>0$. Define
\begin{equation*}
    \alpha:=\sup\left\{\frac{\ord_D(f)-\ord_D\left(\frac{\partial^k f}{\partial x_{i_1}\dots\partial x_{i_k}}\right)}{k}:1\leq k,1\leq i_1,\dots,i_k \leq n\right\}.
\end{equation*}
Then, for every $1\leq i\leq n$ the following inequality holds:
\begin{equation*}
    \ord_D\left(\pdv{h}{x_i}g\right)\geq \ord_D(f)v_{f,X}(hg)-\alpha.
\end{equation*}

\begin{proof}
Take $q\in\Rr{0}(X)$ such that $v_{q,X}=\frac{1}{2}\ord_D$ and $q$ is psd (such $q$ exists thanks to Proposition \ref{existance_of_q_prop}). We need to show that 
\begin{equation*}
    \frac{||dh||g}{q^{\frac{1}{2}(\ord_D(f)v_{f,X}(hg)-\alpha)}}\in L.B.(X).
\end{equation*}
This question is local so we may consider it on a small neighbourhood $U$ of a given point $a\in X$. Firstly we choose it to be so small that $(x_1,\dots,x_n)$ is a parameterisation of $U$. We can apply a linear change of coordinates so that the three functions $f,g,h$ are regular with respect to all the variables at the point $a$, such a transformation will not affect the magnitude of $||dh||$ nor the value of $\alpha$. Using the Weierstrass Preparation Theorem we can locally present the three functions in the forms
\begin{align*}
    f=f_1f_2 \\
    g=g_1g_2 \\
    h=h_1h_2  
\end{align*}
where $f_1,g_1,h_1$ are Nash functions which are polynomial with respect to the variable $x_1$, meanwhile $f_2,g_2,h_2$ are Nash functions which do not vanish at the point $a$.

By the Curve Selection Lemma it is enough to show that for a given smooth semialgebraic function $\psi:[0,1)\rightarrow U$ with $\psi((0,1))\subset \dom(q)$ for all $i$ we have
\begin{equation*}
    \pdv{h}{x_i}(\psi)g(\psi)q^{\frac{1}{2}\alpha}(\psi)\preceq q^{\frac{1}{2}\ord_D(f)v_{f,X}(hg)}(\psi).
\end{equation*}
By symmetry we may prove it only for $i=1$. Let $r\in\R$. We define 
\begin{equation*}
    \xi(t)=(\psi_1(t)+rq^{\frac{1}{2}\alpha}(\psi(t)),\psi_2(t),\dots,\psi_n(t)).
\end{equation*}
Since $q^{\frac{1}{2}\alpha}$ is locally bounded on $U$ ($\alpha> 0$ by Proposition \ref{some_ineq_deriv}) we have that if $r$ is sufficiently close to $0$ and we restrict ourselves to some small interval $(0,\epsilon)$ then $\xi$ is a well defined function into $U$, which can be continuously extended to $t=0$.

To simplify notation let $^{(i)}$ describe $i$-th order partial derivative purely in the direction of the first variable. Consider:
\begin{equation*}
    h_1(\xi)=\sum_{0\leq i\leq \deg(h_1)}\frac{r^iq^{\frac{1}{2}i\alpha}h_1^{(i)}}{i!}(\psi).
\end{equation*}
Applying Lemma \ref{r_polynomial_lemma} we see that we can choose such a value of $r$ that 
\begin{align}
    h_1(\psi),h_1^{(1)}(\psi)q^{\frac{1}{2}\alpha}(\psi)&\preceq h_1(\xi) \nonumber \\
    h^{(1)}(\psi)q^{\frac{1}{2}\alpha}(\psi)=[h_1^{(1)}(\psi)h_2(\psi)+h_1(\psi)h_2^{(1)}(\psi)]q^{\frac{1}{2}\alpha}(\psi)&\preceq h(\xi). \label{preceq_ineq1}
\end{align}
Similarly we might simultaneously require the property
\begin{equation}\label{preceq_ineq2}
      g(\psi)\preceq g(\xi) 
\end{equation}

Now let us focus our attention on the function $f$.
By the definition of $\alpha$ we get that for all $i$
\begin{equation*}
    \frac{(f_1f_2)^{(i)}}{q^{\frac{1}{2}(\ord_D(f)-i\alpha)}}\in L.B.(U).
\end{equation*}
This allows us to inductively show that 
\begin{equation*}
     \frac{f_1^{(i)}}{q^{\frac{1}{2}(\ord_D(f)-i\alpha)}}\in L.B.(U).
\end{equation*}
This means that
\begin{align}
    \frac{f_1(\xi)}{q^{\frac{1}{2}\ord_D(f)}(\psi)}&=\sum_{0\leq i\leq \deg(f_1)}\frac{r^if_1^{(i)}}{i!q^{\frac{1}{2}(\ord_D(f)-i\alpha)}}(\psi)\preceq 1 \nonumber\\
    f(\xi)&\preceq q^{\frac{1}{2}\ord_D(f)}(\psi). \label{preceq_ineq3}
\end{align}

Lastly, by the definition
\begin{equation}\label{preceq_ineq4}
    h(\xi)g(\xi)\preceq f^{v_{f,X}(hg)}(\xi).
\end{equation}

The equations (\ref{preceq_ineq1}), (\ref{preceq_ineq2}), (\ref{preceq_ineq3}), (\ref{preceq_ineq4}) together imply
\begin{equation*}
    h^{(1)}(\psi)g(\psi)q^{\frac{1}{2}\alpha}(\psi)\preceq h(\xi)g(\xi)\preceq f^{v_{f,X}(hg)}(\xi)\preceq q^{\frac{1}{2}\ord_D(f)v_{f,X}(hg)}(\psi)
\end{equation*}
which was to be shown.
\end{proof}
\end{lem}

We are now ready to prove Theorem \ref{valuation_inequality}:

\begin{proof}[Proof of Theorem \ref{valuation_inequality}]
It is enough to prove the theorem when $h\in\RR(X)$ since if $h=\frac{h_1}{h_2}$ then
\begin{equation*}
    \pdv{h}{x_i}/h=\pdv{h_1}{x_i}/h_1-\pdv{h_2}{x_i}/h_2.
\end{equation*}
Let $\alpha$ be defined as in Lemma \ref{valuation_ineq_auxiliary}. 
\begin{claim}\label{proof_val_claim1}
\begin{equation*}
     \ord_D\left(\pdv{h}{x_i}\right)-\ord_D(h)\geq -\alpha\text{ for all }i.
\end{equation*}
\begin{proof}
Let $\ord_{D_1},\dots,\ord_{D_m}$ be the valuations associated to $f$. After reindexing we assume $\ord_D=\ord_{D_1}$. By the minimality of the representation we can find some $g\in\RR(X)$ such that
\begin{equation*}
    \frac{\ord_D(g)}{\ord_D(f)}<\min_{i\neq 1}\frac{\ord_{D_i}(g)}{\ord_{D_i}(f)}.
\end{equation*}
Hence we can find $N$ big enough so that
\begin{equation*}
    \frac{\ord_D(hg^N)}{\ord_D(f)}<\min_{i\neq 1}\frac{\ord_{D_i}(hg^N)}{\ord_{D_i}(f)}.
\end{equation*}
This means that
\begin{equation*}
    v_{f,X}(hg^N)=\min_i\frac{\ord_{D_i}(hg^N)}{\ord_{D_i}(f)}=\frac{\ord_D(hg^N)}{\ord_D(f)}.
\end{equation*}
Now the conclusion of Lemma \ref{valuation_ineq_auxiliary} turns into
\begin{align*}
    \ord_D\left(\pdv{h}{x_i}g^N\right)\geq \ord_D(f)v_{f,X}(hg^N)-\alpha&=\ord_D(hg^N)-\alpha \\
    \ord_D\left(\pdv{h}{x_i}\right)-\ord_D(h)&\geq -\alpha.
\end{align*}
\end{proof}
\end{claim}

\begin{claim}
\begin{equation*}
    \alpha=\max_i \left(\ord_D(f)-\ord_D\left(\pdv{f}{x_i}\right)\right)
\end{equation*}
\begin{proof}
The definition of $\alpha$ (or the preceding claim) gives us inequality in one direction, combining it with Proposition \ref{some_ineq_deriv} we get
\begin{equation*}
    \alpha\geq\max_i \left(\ord_D(f)-\ord_D\left(\pdv{f}{x_i}\right)\right)> 0
\end{equation*}
so in particular $\alpha>0$. Suppose that the first inequality is also strict. Choose any $k\geq 1$ and $1\leq i_1,\dots,i_k\leq n$. Repeatedly applying Claim \ref{proof_val_claim1} we get:
\begin{multline*}
    \ord_D(f)-\ord_D\left(\frac{\partial^k f}{\partial x_{i_1}\dots\partial x_{i_k}}\right)\leq \ord_D(f)-\ord_D\left(\frac{\partial^{k-1} f}{\partial x_{i_1}\dots\partial x_{i_{k-1}}}\right)+\alpha\leq\\
    \leq\dots\leq \ord_D(f)-\ord_D\left(\pdv{f}{x_{i_1}}\right)+(k-1)\alpha < k\alpha .
\end{multline*}
Hence
\begin{equation*}
    \frac{ \ord_D(f)-\ord_D\left(\frac{\partial^k f}{\partial x_{i_1}\dots\partial x_{i_k}}\right)}{k}<\alpha
\end{equation*}
which means that $\alpha$ is not attained in the supremum in its definition. It contradicts the fact that $\alpha>0$ even though
\begin{equation*}
    \frac{\ord_D(f)-\ord_D\left(\frac{\partial^k f}{\partial x_{i_1}\dots\partial x_{i_k}}\right)}{k}\leq\frac{\ord_D(f)}{k}\rightarrow 0
\end{equation*}
as $k\rightarrow \infty$.
\end{proof}
\end{claim}
The proof of the theorem follows by combining both claims.
\end{proof}

\section{Extending $\RR^k$-flat functions}\label{Extending RR^k-flat functions}
This section is devoted to the proof of the fact that every $\RR^k$-flat function on a subvariety extends to a sum of $\RR^k$-flat functions on an ambient variety. We will need the following:

\begin{lem}\label{flat_extension_lemma}
Let $A$ be a codimension one subvariety of a variety $B$, and $x_1,\dots,x_n$ be a regular system of parameters of $\R(B)_a$ at a point $a\in A$ satisfying $(x_1)=\I(A)$. Suppose the entire variety $B$ is regular for the system $x_1,\dots,x_n$. Then for every $g\in \RR(A)$ and $M\in\mathbb{N}$ there exists $G_M\in \RR(B)$ satisfying
\begin{enumerate}
    \item $g=G_M\vert_A$ 
    \item $\frac{\partial^m G_M}{\partial x_1^m}\in \I(A)$ for any $1 \leq m \leq M$.
\end{enumerate}
\begin{proof}
Firstly take $G_0$ to be any regular extension of $g$ to $B$, we will improve it inductively. Having defined $G_{M-1}$ it suffices to consider
\begin{equation*}
    G_M:=G_{M-1}-\frac{x_1^M}{M!}\frac{\partial^M G_{M-1}}{\partial x_1^M}.
\end{equation*}
\end{proof}
\end{lem}

We firstly prove the main result in a local version:
\begin{thm}\label{local_extension_theorem}
Let $X$ be a closed subvariety of a variety $Y$. Let $f\in\R(X)$ be locally $\RR^k$-flat at $a\in X$. Then there exists a rational function $F$ on $Y$, locally $\RR^k$-flat at $a$ such that $f$=$F\vert_X$ (the equality making sense at all points of $X$ at which $f$ is defined).
\begin{proof}
Since the question is local we may for simplicity assume that $\codim(X)=1$ and in the general case construct an extension one dimension at a time.
Take $\frac{p}{q}$ to be an $\RR^k$-flat representation of $f$ at $a$. Choose a system of parameters $x_1,\dots,x_n$ of $\RR(Y)_a$ such that locally $(x_1)=\I(X)$. Choose a small Zariski neighbourhood $B\subset Y$ of $a$ such that it is regular for $x_1,\dots,x_n$, $(x_1)=\I(A:=B\cap X)$ holds in $\RR(B)$, and such that $\frac{p}{q}$ is an $\RR^k$-flat representation of $f$ on $A$. 

Now by Lemma \ref{to_be_proven_later} we can find a small $\alpha\in\mathbb{Q}$ with 
\begin{equation*}
    0<k\alpha<v_{q,A}(p)-1\text{ and } \alpha<1.
\end{equation*}

Choose also some $N\in 2\mathbb{N}$ satisfying $N\alpha>2k+2$, $N\alpha\in 2\mathbb{N}$.
Now apply Lemma \ref{flat_extension_lemma} to find $P,Q\in\RR(B)$ with
\begin{equation*}
    p=P\vert_A, \;q=Q\vert_A, 
\end{equation*}
\begin{equation} \label{Q1R1_are_flat}
    \frac{\partial^m P}{\partial x_1^m}, \frac{\partial^m Q}{\partial x_1^m}\in \I(A) \text{ for all }1\leq m \leq \frac{2k+6}{\alpha}.
\end{equation}
Notice that differentiating an element of $\I(A)$ with respect to any of the last $n-1$ variables will not make it leave $\I(A)$, so we also have
\begin{equation}
    \frac{\partial^m \pdv{Q}{x_i}}{\partial x_1^m}\in \I(A) \text{ for all }1\leq m \leq \frac{2k+5}{\alpha},\;1\leq i\leq n.
\end{equation}
Define:
\begin{align*}
    P_1&:=PQ^{N\alpha-1} \\
    Q_1&:=Q^{N\alpha}+x_1^N
\end{align*}
It is obvious that $f=\frac{P_1}{Q_1}\vert_X$, we will show that $\frac{P_1}{Q_1}$ is an $\RR^k$-flat representation on $B$ which will finish the proof. Fix some $b\in \Z(Q_1)\cap B$; our task is to show that $\frac{P_1||dQ_1||^k}{Q_1^{k+1}}$ approaches zero at $b$. In other words we need to prove that:
\begin{equation*}
    \frac{PQ^{N\alpha-1}}{Q^{N\alpha}+x_1^N}\left(\frac{N\alpha Q^{N\alpha-1}\pdv{Q}{x_i}+Nx_1^{N-1}\delta_{1,i}}{Q^{N\alpha}+x_1^N}\right)^k\rightarrow 0\text{ as }x\rightarrow b\text{ for all } i.
\end{equation*}
We have the following elementary inequality:
\begin{multline*}
    \left|N\alpha\frac{\pdv{Q}{x_i}}{Q}\right|^k+\left|N\frac{1}{|Q|^\alpha}\right|^k\geq \\
    \geq \left|\frac{N\alpha Q^{N\alpha-1}\pdv{Q}{x_i}}{Q^{N\alpha}+x_1^N}\right|^k+ \left|N\frac{1}{(Q^{N\alpha}+x_1^N)^\frac{1}{N}}\frac{|x_1|^{N-1}}{(Q^{N\alpha}+x_1^N)^\frac{N-1}{N}}\right|^k \geq\\
    \geq 2^{1-k}\left|\frac{N\alpha Q^{N\alpha-1}\pdv{Q}{x_i}+Nx_1^{N-1}\delta_{1,i}}{Q^{N\alpha}+x_1^N}\right|^k
\end{multline*}
Hence it is enough to show that
\begin{equation}\label{extension_goal}
    \frac{PQ^{N\alpha-1}}{Q^{N\alpha}+x_1^N}\left(\frac{\pdv{Q}{x_i}}{Q}\right)^k\rightarrow 0,\; \frac{PQ^{N\alpha-1}}{Q^{N\alpha}+x_1^N}\frac{1}{|Q|^{k\alpha}}\rightarrow 0\text{ as }x\rightarrow b\text{ for all }i. \tag{*}
\end{equation}
Take $U$ to be a small Euclidean neighbourhood of $b$ in $B$, such that the function $\varphi:=(x_1,\dots,x_n)$ restricted to $U$ is a diffeomorphism onto some n-dimensional hypercube. We cover $U$ by two sets $U_1$ and $U_2$, which can be interpreted as points lying "far away" from the set $X\backslash \Z(q)$ and "close" to the set $X\backslash\Z(q)$ respectively:
\begin{align*}
    U_1&:=\{x\in U: x_1^2\geq |Q(x)|^\alpha\} \\
    U_2&:=\{x\in U: x_1^2\leq |Q(x)|^\alpha\} 
\end{align*}
On $U_1$ our goal is not difficult to reach:
\begin{align*}
    \left|\frac{PQ^{N\alpha-1}}{Q^{N\alpha}+x_1^N}\left(\frac{\pdv{Q}{x_i}}{Q}\right)^k\right|&\leq \left|P\left(\pdv{Q}{x_i}\right)^k\frac{Q^{\frac{1}{2}N\alpha}}{x_1^N}Q^{\frac{1}{2}N\alpha-1-k}\right|\rightarrow 0\\
    \left|\frac{PQ^{N\alpha-1}}{Q^{N\alpha}+x_1^N}\frac{1}{|Q|^{k\alpha}}\right|&\leq\left|P\frac{Q^{\frac{1}{2}N\alpha}}{x_1^N}|Q|^{\frac{1}{2}N\alpha-1-k\alpha}\right|\rightarrow 0.
\end{align*}

On $U_2$ things get a bit messier. The point here is that since $U_2$ contains only points lying sufficiently close to the variety $X$, the functions $P$, $Q$ and the partial derivatives of $Q$ all behave roughly as they do when we restrict them to $X$, and on $X$ we have everything under control.

Since $\varphi(U)$ is a hypercube it makes sense to define the following retraction:
\begin{equation*}
    \pi:U\rightarrow U\cap A, \; (x_1,x_2,\dots,x_n)\mapsto (0,x_2,\dots,x_n)
\end{equation*}
From Taylor's theorem and (\ref{Q1R1_are_flat}) we get for $x\in U_2$:
\begin{equation*}
    ||Q(x)-Q\circ \pi(x)||\leq C||(x_1,0,\dots,0)||^{(2k+4)/\alpha}=C|x_1|^{(2k+4)/\alpha}\leq C|Q(x)|^{k+2}
\end{equation*}
and so
\begin{equation}\label{valuation_of_Rpi1}
    v_{Q,U_2}(Q-Q\circ \pi)> k+1.
\end{equation}
Similarly for $P$ and all first order partial derivatives of $Q$ we get
\begin{align}
    v_{Q,U_2}(P-P\circ \pi)&>k+1 \label{valuation_of_Q}\\
    \forall_i \;v_{Q,U_2}\left(\pdv{Q}{x_i}-\pdv{Q}{x_i}\circ \pi\right)&>k+1 \label{valuation_of_R'} .
\end{align}
The equation (\ref{valuation_of_Rpi1}) implies that 
\begin{equation}\label{valuation_of_Rpi}
    v_{Q,U_2}(Q\circ \pi)\geq 1.
\end{equation}
Now notice that 
\begin{equation*}
    v_{Q\circ \pi,U_2}(P\circ \pi)=v_{q,\pi(U_2)}(p)>1+k\alpha.
\end{equation*}
By (\ref{valuation_of_Rpi}) we get $v_{Q,U_2}(P\circ \pi)>1+k\alpha$, so by (\ref{valuation_of_Q})
\begin{equation*}
    v_{Q,U_2}(P)=v_{Q,U_2}((P-P\circ \pi)+P\circ \pi)>1+k\alpha.
\end{equation*}
This proves the second part of (\ref{extension_goal}) knowing that $b\in U_2$:
\begin{equation*}
     \left|\frac{PQ^{N\alpha-1}}{Q^{N\alpha}+x_1^N}\frac{1}{|Q|^{k\alpha}}\right|\leq\left|\frac{P}{|Q|^{1+k\alpha}}\right|\rightarrow 0.
\end{equation*}

Now let $1<i\leq n$ and consider
\begin{equation*}
    v_{Q\circ \pi,U_2}\left((P\circ \pi)\left(\pdv{Q}{x_i}\circ \pi\right)^k\right)=v_{q,\pi(U_2)}\left(p\left(\pdv{q}{x_i}\right)^k\right)>k+1.
\end{equation*}
Notice that for $i=1$ we have $\pdv{Q}{x_1}\circ \pi \equiv 0$ so the above inequality trivially stays true. Hence by (\ref{valuation_of_Rpi}) 
\begin{equation}\label{valuation_of_QpiR'pi}
    v_{Q,U_2}\left((P\circ \pi)\left(\pdv{Q}{x_i}\circ \pi\right)^k\right)>k+1.
\end{equation}
From Newton's Binomial Formula it follows that
\begin{multline*}
   P\left(\pdv{Q}{x_i}\right)^k=\\
    =(P-P\circ\pi)\left(\pdv{Q}{x_i}\right)^k+
    (P\circ\pi)\sum_{j=0}^k\binom{k}{j} \left(\pdv{Q}{x_i}\circ\pi\right)^j\left(\pdv{Q}{x_i}-\pdv{Q}{x_i}\circ\pi\right)^{k-j}
\end{multline*}
Using (\ref{valuation_of_Q}), (\ref{valuation_of_R'}) and (\ref{valuation_of_QpiR'pi}) this gives us
\begin{equation*}
    v_{Q,U_2}\left(P\left(\pdv{Q}{x_i}\right)^k\right)>k+1.
\end{equation*}
This implies the first part of (\ref{extension_goal}) finishing the proof:
\begin{equation*}
    \left|\frac{PQ^{N\alpha-1}}{Q^{N\alpha}+x_1^N}\left(\frac{\pdv{Q}{x_i}}{Q}\right)^k\right|\leq \left|\frac{P\left(\pdv{Q}{x_i}\right)^k}{Q^{k+1}}\right|\rightarrow 0.
\end{equation*}

\end{proof}
\end{thm}
\begin{cor}[$\RR^k$-flat extension]\label{rk_flat_extension}
Let $X$ be a subvariety of a variety $Y$ and let $f$ be an $\RR^k$-flat function on $X$. Then $f$ extends to a finite sum of $\RR^k$-flat functions on $Y$.
\begin{proof}
By Theorem \ref{local_extension_theorem} and Proposition \ref{local_to_global_rk} for every $a\in X$ we can find a regular function $g$ such that $g(a)\neq0$ and $fg$ extends to a globally $\RR^k$-flat function on $Y$. This means that
\begin{equation*}
    \Z(\{g\in\RR(X):gf\text{ extends to a sum of $\RR^k$-flat functions on Y}\})=\emptyset
\end{equation*}
so the Weak Nullstellensatz finishes the proof.
\end{proof}
\end{cor}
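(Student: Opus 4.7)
The plan is to combine the local extension theorem just proved with Proposition \ref{local_to_global_rk} applied on the ambient variety $Y$, and then patch the resulting local data using the Weak Nullstellensatz — in exact parallel to the Corollary to Proposition \ref{local_to_global_rk} established earlier. The two substantial tools are already in hand; what remains is formal bookkeeping.

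First, fix $x \in X$. Theorem \ref{local_extension_theorem} produces a rational function $F_x \in \R(Y)$ that is locally r$k$-flat at $x$ and satisfies $F_x = f + \I(X)$ as rational functions. Then Proposition \ref{local_to_global_rk}, applied to $F_x$ on $Y$, yields $G_x \in \RR(Y)$ with $G_x(x) \neq 0$ such that $G_x F_x$ is globally r$k$-flat on $Y$. Restricting, $g_x := G_x|_X \in \RR(X)$ does not vanish at $x$ and the product $g_x f$ extends to the globally r$k$-flat function $G_x F_x$ on $Y$.

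Now form the ideal
\begin{equation*}
    I := \{\, g \in \RR(X) : gf \text{ extends to a finite sum of r$k$-flat functions on } Y \,\}.
\end{equation*}
The preceding step shows $\Z(I) = \emptyset$, so the Weak Nullstellensatz produces $1 \in I$, which is exactly the desired statement that $f$ extends to a finite sum of r$k$-flat functions on $Y$.

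The only step that is not purely formal is verifying that $I$ is genuinely an ideal of $\RR(X)$. Closure under addition is immediate since a sum of sums of r$k$-flat functions is a sum of r$k$-flat functions. For closure under multiplication by $h \in \RR(X)$, pick any regular extension $H \in \RR(Y)$ of $h$ and note that if $q/r$ is an r$k$-flat representation of some $F \in \R(Y)$, then $(Hq)/r$ is an r$k$-flat representation of $HF$: indeed $Hq\,\|Dr\|^k / r^{k+1}$ is just $H$ times $q\,\|Dr\|^k / r^{k+1}$, and the extra factor $H$, being locally bounded, preserves vanishing at each point of $\Z(r)$. Thus regular-times-r$k$-flat is again r$k$-flat, which is the only analytic input needed beyond the two preceding results. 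I do not anticipate any serious obstacle: the hard analytic content has already been absorbed by Theorem \ref{local_extension_theorem} and Proposition \ref{local_to_global_rk}, and what remains is a Nullstellensatz patching argument identical in shape to the one the paper has already used.
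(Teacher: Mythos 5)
Your proof is correct and follows essentially the same route as the paper: local extension via Theorem \ref{local_extension_theorem}, globalization of the representation via Proposition \ref{local_to_global_rk}, and a Weak Nullstellensatz patching argument over the ideal $I \subset \RR(X)$. The one genuine addition is your explicit check that $I$ is an ideal (in particular that regular times r$k$-flat is again r$k$-flat), which the paper leaves implicit; that verification is sound and worth having.
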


\section{The differential multiplicity}\label{The differential multiplicity and sums of squares}
\emph{In this section we use some basic terminology regarding resolution of singularities, we refer the reader to \cite{resolution_snc} for the definitions.}

From now on we will almost exclusively work in a setting where $X$ and $X_\sigma$ are varieties of the same dimension $n$ and $\sigma:X_\sigma\rightarrow X$ is the composition of an admissible sequence of blowups. In such a case, for short we will say that $\sigma$ is an \emph{admissible multiblowup}. By the \emph{exceptional divisors} of $\sigma$ we mean the irreducible components of its exceptional locus; every such divisor arises as the exceptional divisor of one of the blowups in the sequence. The multiblowup $\sigma$ induces an isomorphism between $\R(X)$ and $\R(X_\sigma)$; we identify the two fields using it.

\subsection{Restricted simple normal crossings}

\begin{defi}
Let $X$ be a variety and $\sigma:X_\sigma\rightarrow X$ an admissible multiblowup. Let $f$ be a rational function regular at a point $a_\sigma\in X_\sigma$. Then, $f$ is said to be \emph{rsnc (restricted simple normal crossing)} with respect to $\sigma$ at $a_\sigma$ if there exists a system of parameters $u_1,\dots,u_n$ at $a_\sigma$ and a number $0\leq k\leq n$, such that for every $1\leq i \leq k$, $u_i$ locally generates an exceptional divisor of $\sigma$ and $f$ can be written as
\begin{equation*}
    f=u_1^{\alpha_1}\dots u_k^{\alpha_k}r
\end{equation*}
for some $\alpha_1,\dots,\alpha_k\geq 0$ and $r\in\RR(X_\sigma)_{a_\sigma}^*$. We say that $f$ is rsnc on $X_\sigma$ (with respect to $\sigma$) if it is rsnc at every point thereof.
\end{defi}

Firstly we need to prove a few lemmas regarding restricted simple normal crossings.
\begin{prop}\label{rsnc_stays_after_blowup}
Let $\sigma_1:X_{\sigma_1}\rightarrow X_\sigma$ be another multiblowup as in the diagram below, so that $\sigma\circ\sigma_1$ is admissible.
\begin{center}
    \begin{tikzcd}
    X & X_{\sigma} \arrow[l, "\sigma"] & X_{\sigma_1} \arrow[l, "\sigma_1"]
    \end{tikzcd}
\end{center}
Let $f\in \R(X)$ be rsnc at $a_\sigma\in X_\sigma$ (with respect to $\sigma$). Then $f$ is rsnc at every point of $\sigma_1^{-1}(a_\sigma)$ (with respect to $\sigma\circ\sigma_1$).
\begin{proof}
By induction it is enough to consider the case when $\sigma_1$ is a blowup. Ignoring the trivial cases, we may assume that the center of $\sigma_1$ contains $a_\sigma$, and that $a_\sigma$ lies in the exceptional locus of $\sigma$. By our assumption $f$ can be written as
\begin{equation*}
    f=u_1^{\alpha_1}\dots u_k^{\alpha_k}r
\end{equation*}
so to it is enough to consider the cases when $f=u_i$ for some $i$ or $f=r$. The latter case is trivial since then $f$ is a unit at every point of $\sigma^{-1}(a_\sigma)$, so without loss of generality we take $f=u_1$. Let the exceptional divisor $\Z(u_1)$ of $\sigma$ be denoted by $D$. We again consider two possibilities:

If the center of $\sigma_1$ is not contained in $D$ then since the blowup is admissible $u_1$ locally generates $D$ at every point of $\sigma_1^{-1}(a_\sigma)$.

If the center of $\sigma_1$ is contained in $D$ then we have to consider each of the blowup charts separately. There are two types of them, the first one with coordinate system of the form
\begin{equation*}
    \begin{cases}
        u_1=v_jv_1 \\
        \dots\\
        u_{j-1}=v_jv_{j-1} \\
        u_j=v_j \\
        u_{j+1}=v_{j+1}\\
        \dots \\
        u_n=v_n
    \end{cases}
\end{equation*}
for some $j>1$. In this case $v_j$ locally generates the exceptional divisor induced by $\sigma_1$ at every point of $\sigma_1^{-1}(a_\sigma)$, meanwhile $v_1$ locally generates $D$. The other case is
\begin{equation*}
    \begin{cases}
        u_1=v_1\\
        u_2=v_1v_2\\
        \dots\\
        u_j=v_1v_j\\
        u_{j+1}=v_{j+1}\\
        \dots\\
        u_n=v_n
    \end{cases}
\end{equation*}
Here $v_1$ also locally generates the newly created divisor at every point of $\sigma_1^{-1}(a_\sigma)$ so we are done.
\end{proof}
\end{prop}

\begin{lem}\label{rel_prime_set}
Let $a\in X$ and let $p,q$ be two relatively prime elements of $\RR(X)_a$. There exists a Zariski open subset $a\in A\subset X$ such that
\begin{enumerate}
    \item $p,q\in\RR(A)$
    \item $p$ and $q$ are relatively prime in every localization $\RR(A)_{a'}$ for $a'\in A$.
\end{enumerate}
\begin{proof}
Writing $q$ as a product of primes in $\RR(X)_a$ it is enough to consider each of the primes separately, so we assume that $q$ is a prime itself. Choose a Zariski open neighbourhood $B$ of $a$ with $p,q\in\RR(B)$ and consider the ideal
\begin{equation*}
    I:=\{f\in \RR(B):q\text{ divides }f\text{ in }\RR(X)_a\}.
\end{equation*}
Since it is finitely generated we can find an even smaller neighbourhood $A$ of $a$ so that
\begin{equation*}
    f\in I \implies \frac{f}{q}\in\RR(A).
\end{equation*}

Suppose that $p$ and $q$ are not relatively prime at a point $a'\in A$. Since $\RR(X)_{a'}$ is a UFD this means that there exists $p'\in\RR(X)_{a'}$ satisfying 
\begin{align*}
    \frac{p'}{q}&\not\in\RR(X)_{a'}\\
    \frac{pp'}{q}&\in\RR(X)_{a'}
\end{align*}
There exists $g\in\RR(B)$ satisfying $g(a')\neq 0$ and $gp',\frac{pgp'}{q}\in\RR(B)$. Since $q$ does not divide $p$ in $\RR(X)_a$, $q$ must divide $gp'$ in that ring, so by the definition of $A$ we have 
\begin{equation*}
    \frac{gp'}{q}\in\RR(A)\subset\RR(X)_{a'}
\end{equation*}
Hence $\frac{p'}{q}\in\RR(X)_{a'}$; a contradiction. 
\end{proof}
\end{lem}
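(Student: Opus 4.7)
The plan is to first reduce to the case where $q$ is irreducible in $\RR(X)_z$. Since $\RR(X)_z$ is a UFD, I can factor $q$ into a product of primes $q_1 \cdots q_s$; then $p$ is coprime to $q$ if and only if $p$ is coprime to each $q_i$, so handling each prime factor separately and intersecting the finitely many resulting Zariski neighborhoods reduces the problem to $q$ being itself a prime element of $\RR(X)_z$.

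With $q$ prime, I would pick any Zariski neighborhood $B$ of $z$ on which both $p$ and $q$ are regular, and then consider the ideal
\begin{equation*}
    I := \{f \in \RR(B) : q \text{ divides } f \text{ in } \RR(X)_z\}.
\end{equation*}
By Noetherianity, $I$ is finitely generated, say by $f_1, \dots, f_r$, each producing a quotient $h_i = f_i/q \in \RR(X)_z$. Shrinking $B$ to a smaller Zariski neighborhood $A$ on which every $h_i$ is regular guarantees that the operation of dividing by $q$ sends $I$ into $\RR(A)$. In other words, the moment a regular function on $A$ happens to be divisible by $q$ in the single stalk at $z$, that quotient is in fact realized globally on $A$ as a regular function.

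The verification of coprimality at an arbitrary $z_1 \in A$ would then be a proof by contradiction. If $p$ and $q$ shared an irreducible factor $d$ in the UFD $\RR(X)_{z_1}$, then writing $q = d \cdot p'$ gives $p' \in \RR(X)_{z_1}$ with $p'/q = 1/d \notin \RR(X)_{z_1}$ yet $pp'/q \in \RR(X)_{z_1}$. Clearing denominators, I find $g \in \RR(B)$ with $g(z_1) \neq 0$ such that both $gp'$ and $p \cdot gp'/q$ lie in $\RR(B)$. Since $q$ is prime in $\RR(X)_z$ and does not divide $p$ there, it must divide $gp'$ in $\RR(X)_z$, placing $gp' \in I$. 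The defining property of $A$ then yields $gp'/q \in \RR(A) \subset \RR(X)_{z_1}$, and dividing by the unit $g$ gives $p'/q \in \RR(X)_{z_1}$, the desired contradiction. The main subtlety I expect is the bridge between primality of $q$ at the single stalk $\RR(X)_z$ and the UFD structure at the varying stalks $\RR(X)_{z_1}$; the ideal $I$ together with the passage from $B$ to $A$ is precisely the device that makes this transfer work.
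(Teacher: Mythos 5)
Your proposal reproduces the paper's proof essentially verbatim: the reduction to $q$ prime, the ideal $I$ and the shrinking $B \rightsquigarrow A$ so that division by $q$ lands in $\RR(A)$, the construction of $p'$ from a hypothetical common factor $d$, the clearing of denominators via $g$, and the contradiction through $I$ are all the same. The only difference is that you spell out explicitly how $p'$ is obtained (as $q/d$), which the paper leaves implicit; the argument is correct.
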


\begin{lem}\label{gcd_lemma}
Let $a\in X$ and let $p,q$ be two relatively prime elements of $\RR(X)_a$. Then for any admissible multiblowup $\sigma:X_\sigma\rightarrow X$ and a point $a_\sigma$ with $\sigma(a_\sigma)=a$, the greatest common divisor of $p$ and $q$ in $\RR(X_\sigma)_{a_\sigma}$ is rsnc at $a_\sigma$.
\begin{proof}
By induction it is enough to prove the lemma when $\sigma$ is a blowup. Choose local coordinates $x_1,\dots,x_n$ at $a$ so that $\Z(x_1,\dots,x_j)$ is its center (for some $1\leq j\leq n$). Without loss of generality $a_\sigma$ belongs to the following chart:
\begin{equation}\label{affine_chart_def}
    \begin{cases}
        x_1=u_1\\
        x_2=u_2u_1\\
        \dots\\
        x_j=u_ju_1\\
        x_{j+1}=u_{j+1}\\
        \dots\\
        x_n=u_n
    \end{cases}
\end{equation}
Here $u_1$ generates the newly created exceptional divisor, so we are to prove that the greatest common divisor of $p$ and $q$ in $\RR(X_\sigma)_{a_\sigma}$ is a monomial in $u_1$. 

The affine chart, let it be called $A$, is defined by the equations (\ref{affine_chart_def}) together with equations defining $X$. This means that if we denote its coordinate ring by $\mathcal{P}(A)$ then we have the following inclusions:
\begin{equation}\label{inclusions}
    \RR(X)_a[\frac{1}{x_1}]\supset\RR(X)_a[u_1,\dots,u_n]\supset \mathcal{P}(A)\subset\mathcal{P}(A)_{a_\sigma}= \RR(X_\sigma)_{a_\sigma}
\end{equation}
Let $r$ be the greatest common divisor of $p$ and $q$ in $\RR(X_\sigma)_{a_\sigma}$. This means that there exist $p', q'\in\RR(X_\sigma)_{a_\sigma} $ such that
\begin{align*}
    rp'=p \\
    rq'=q
\end{align*}

By the last equality in (\ref{inclusions}) we can represent $r$ as $r=\frac{r_1}{r_2}$ where $r_1,r_2\in \mathcal{P}(A)$ and $r_2\in\mathcal{P}(A)_{a_\sigma}^\ast= \RR(X_\sigma)_{a_\sigma}^\ast$. One can find similar representations $p'=\frac{p_1}{p_2}$ and $q'=\frac{q_1}{q_2}$. This way after multiplying by a large enough power of $x_1$ we get:
\begin{align*}
    (r_1x_1^N)(p_1x_1^N)=(r_2x_1^N)(p_2x_1^N)p\\
    (r_1x_1^N)(q_1x_1^N)=(r_2x_1^N)(q_2x_1^N)q
\end{align*}
with $r_ix_1^N,p_ix_1^N,q_ix_1^N\in \RR(X)_a$. Since $p$ and $q$ are relatively prime we get that $r_1x_1^N$ divides $r_2p_2q_2x_1^{3N}$ in $\RR(X)_a$. This implies that it divides $u_1^{3N}$ in $\RR(X_\sigma)_{a_\sigma}$, so it is rsnc at $a_\sigma$. Hence $r$ is also one.
\end{proof}
\end{lem}

\begin{prop}\label{monomial_denom_representation}
Let $\sigma:X_\sigma\rightarrow X$ be an admissible multiblowup and let $f\in \RR(X_\sigma)$. Let $a\in X$. If $\frac{p}{q}$ is an irreducible representation of $f=\frac{p}{q}$ as a fraction in $\RR(X)_a$ then $q$ is rsnc at every point of $\sigma^{-1}(A)$ for some Zariski neighbourhood $A$ of $a$.
\begin{proof}
Using Lemma \ref{rel_prime_set} we can take $A$ to be such that $p$ and $q$ are relatively prime in $\RR(X)_{a'}$ for every $a'\in A$. Lemma \ref{gcd_lemma} finishes the proof, as $q$ divides $p$ in $\RR(X_\sigma)_{a_\sigma}$ for every $a_\sigma\in\sigma^{-1}(a')$.
\end{proof}
\end{prop}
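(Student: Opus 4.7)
The plan is to combine the two preceding lemmas directly: Lemma \ref{rel_prime_set} lets us propagate relative primality of $q$ and $r$ from the single local ring $\RR(X)_z$ to all local rings $\RR(X)_{z_1}$ for $z_1$ in some Zariski open neighbourhood $A$ of $z$, while Lemma \ref{gcd_lemma} tells us that greatest common divisors of relatively prime pairs become rsnc after any admissible multiblowup. So the first step is to apply Lemma \ref{rel_prime_set} to obtain such an $A$.

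The crucial intermediate step is to show that, for every $z_\sigma \in \sigma^{-1}(A)$ lying over some $z_1 \in A$, the denominator $r$ agrees up to a unit with the greatest common divisor of $q$ and $r$ in $\RR(X_\sigma)_{z_\sigma}$. Since $f = q/r$ is regular on $X_\sigma$, it lies in the UFD $\RR(X_\sigma)_{z_\sigma}$. If $g$ denotes the gcd of $q$ and $r$ in that ring and we write $q = q_0 g$, $r = r_0 g$ with $q_0, r_0$ coprime, then $f = q_0 / r_0 \in \RR(X_\sigma)_{z_\sigma}$ forces $r_0 \mid q_0$, and coprimality makes $r_0$ a unit. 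Hence $r$ is a unit multiple of $g$. Since $q$ and $r$ are relatively prime in $\RR(X)_{z_1}$ by the choice of $A$, Lemma \ref{gcd_lemma} applies and $g$ (and therefore $r$) is rsnc at $z_\sigma$. Doing this for every $z_\sigma$ above $A$ completes the proof.

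There is really no hard step here: the content has been absorbed into Lemmas \ref{rel_prime_set} and \ref{gcd_lemma}. The only point requiring mild care is the UFD computation showing that an irreducible fraction in $\RR(X)_z$, once it becomes regular upstairs on $X_\sigma$, has denominator equal (up to a unit) to the gcd of numerator and denominator in the upstairs local ring — a routine observation but the key link that lets Lemma \ref{gcd_lemma} be applied to $r$ itself rather than just to a divisor of it.
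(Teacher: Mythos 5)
Your proposal is correct and takes essentially the same route as the paper: apply Lemma \ref{rel_prime_set} to propagate relative primality of $q$ and $r$ to a Zariski neighbourhood $A$, then apply Lemma \ref{gcd_lemma} at each point of $\sigma^{-1}(A)$, using the regularity of $f$ on $X_\sigma$ to identify $r$ (up to a unit) with the gcd of $q$ and $r$ upstairs. The only difference is presentational: the paper compresses your UFD computation into the one-line observation that $r$ divides $q$ in $\RR(X_\sigma)_{z_\sigma}$ (hence $\gcd(q,r)=r$ up to units there), whereas you unfold it by writing $q=q_0g$, $r=r_0g$ and showing $r_0$ is a unit; these are the same calculation.
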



\subsection{The differential multiplicity}
Let us now introduce partial derivatives to the setting considered. Suppose we have a system of parameters $x_1,\dots,x_k$ at a point of $X$ and a system of parameters $u_1,\dots,u_k$ at a point of $X_\sigma$. An element $f\in \R(X)$ by our definitions admits partial derivatives with respect to the variables $x_1,\dots,x_n$. Using the association $\R(X)\cong \R(X_\sigma)$ we might as well speak of $\pdv{f}{u_1},\dots,\pdv{f}{u_n}\in \R(X)$. 

\begin{prop}[The chain rule]
The different derivatives of $f$ are related by the following formulas:
\begin{equation*}
    \begin{bmatrix}
        \pdv{f}{x_1}   & \cdots & \pdv{f}{x_n}
    \end{bmatrix}
    \begin{bmatrix}
        \pdv{x_1}{u_1} & \cdots & \pdv{x_1}{u_n} \\
        \vdots         & \ddots & \vdots         \\
        \pdv{x_n}{u_1} & \cdots & \pdv{x_n}{u_n} \\
    \end{bmatrix}
    =
    \begin{bmatrix}
        \pdv{f}{u_1}   & \cdots & \pdv{f}{u_n}
    \end{bmatrix}
\end{equation*}

\begin{equation*}
    \begin{bmatrix}
        \pdv{f}{x_1}   & \cdots & \pdv{f}{x_n}
    \end{bmatrix}
    =
    \begin{bmatrix}
        \pdv{f}{u_1}   & \cdots & \pdv{f}{u_n}
    \end{bmatrix}
    \begin{bmatrix}
        \pdv{u_1}{x_1} & \cdots & \pdv{u_1}{x_n} \\
        \vdots         & \ddots & \vdots         \\
        \pdv{u_n}{x_1} & \cdots & \pdv{u_n}{x_n} \\
    \end{bmatrix}
\end{equation*}
\begin{proof}
We can find a non-empty Zariski open subset of $X$ on which $\sigma$ is an isomorphism, the systems $x_1,\dots,x_n$ and $u_1,\dots,u_n$ are regular and $f$ is regular. After such a restriction the equations are just the usual chain rule (written in two directions).
\end{proof}
\end{prop}

\begin{obs}\label{diff_localmon}
Let $a\in X,\;a_\sigma\in\sigma^{-1}(a)$ and $f\in\RR(X_\sigma)_{a_\sigma}$. Let $x_1,\dots,x_n$ be a system of parameters at $a$. There exists $g\in\RR(X_\sigma)_{a_\sigma}$ being rsnc at $a_\sigma$ such that $g\pdv{f}{x_1}\in\RR(X_\sigma)_{a_\sigma}$. More precisely if all the exceptional divisors passing through $a_\sigma$ are $D_1,\dots,D_k$ and $u_1,\dots,u_k$ generate their ideals respectively, then we can take
\begin{equation*}
    g=u_1^{\max(0,-\ord_{D_1}(\pdv{f}{x_1}))}\dots u_k^{\max(0,-\ord_{D_k}(\pdv{f}{x_1}))}.
\end{equation*}
\begin{proof}
Take $f=\frac{q}{r}$ to be an irreducible representation of $f$ in $\RR(X)_a$. Then $r^2\pdv{f}{x_1}\in\RR(X)_a\subset\RR(X_\sigma)_{a_\sigma}$ so we can take $g=r^2$ which is rsnc by Proposition \ref{monomial_denom_representation}. Since the valuations $\ord_{D_i}$ are the $u_i$-adic valuations in the ring $\RR(X_\sigma)_{a_\sigma}$, we can then reduce $g$ by powers of these elements until no further reduction is possible. The orders of $g$ in $u_1,\dots,u_k$ must then be equal to $\max(0,-\ord_{D_1}(\pdv{f}{x_1})),\dots,\max(0,-\ord_{D_k}(\pdv{f}{x_1}))$ respectively.
\end{proof}
\end{obs}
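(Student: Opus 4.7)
The plan is to exhibit a natural but oversized candidate for $h$ by clearing the denominator of an irreducible representation of $f$, and then reduce it to the stated minimal form by analyzing orders at prime divisors of the regular local ring $\RR(X_\sigma)_{z_\sigma}$.

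To produce the candidate, I would write $f$ as an irreducible fraction $f=q/r$ in the regular local ring $\RR(X)_z$, using the identification $\R(X)\cong\R(X_\sigma)$ induced by $\sigma$. Differentiating gives
\begin{equation*}
    \pdv{f}{x_1}=\frac{\pdv{q}{x_1}\cdot r-q\cdot \pdv{r}{x_1}}{r^2},
\end{equation*}
whose numerator lies in $\RR(X)_z$. Hence $r^2\pdv{f}{x_1}\in\RR(X)_z\subset\RR(X_\sigma)_{z_\sigma}$, and by Proposition \ref{monomial_denom_representation} the element $r$, and so $r^2$, is rsnc at $z_\sigma$. Thus $h_0:=r^2$ is already a valid though generally wasteful choice of $h$.

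To cut $h_0$ down to the minimal rsnc monomial, I would exploit the fact that $\RR(X_\sigma)_{z_\sigma}$ is a regular local ring, hence a UFD: an element of its fraction field lies in it if and only if it has non-negative order at every prime divisor through $z_\sigma$. For a non-exceptional divisor $D$ through $z_\sigma$, the rsnc property of $r^2$ forces $\ord_D(r^2)=0$, and combining this with $r^2\pdv{f}{x_1}\in\RR(X_\sigma)_{z_\sigma}$ yields $\ord_D(\pdv{f}{x_1})\geq 0$. So any poles of $\pdv{f}{x_1}$ at $z_\sigma$ must sit on one of the exceptional divisors $d_1,\dots,d_k$. Writing $h=u_1^{a_1}\cdots u_k^{a_k}$ (which is automatically rsnc), the regularity of $h\pdv{f}{x_1}$ at $z_\sigma$ is therefore equivalent to the pair of conditions $a_i\geq 0$ and $a_i\geq -\ord_{d_i}(\pdv{f}{x_1})$, and the smallest admissible choice is $a_i=\max(0,-\ord_{d_i}(\pdv{f}{x_1}))$, exactly as asserted.

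The main obstacle is the step confining the polar locus of $\pdv{f}{x_1}$ to the exceptional divisors; everything else is a bookkeeping argument about valuations in a UFD. This step relies crucially on Proposition \ref{monomial_denom_representation}, which translates the irreducibility of $f=q/r$ in $\RR(X)_z$ into the rsnc condition on $r$ upstairs on $X_\sigma$. Without it, a non-exceptional prime could appear in the denominator of $\pdv{f}{x_1}$ and would require cancellation by a non-rsnc factor, which the statement does not permit.
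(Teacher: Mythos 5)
Your proof is correct and follows essentially the same route as the paper: clear the denominator via $h_0=r^2$ from an irreducible fraction $f=q/r$ in $\RR(X)_z$, invoke Proposition \ref{monomial_denom_representation} to see $r^2$ is rsnc, then use the UFD structure of $\RR(X_\sigma)_{z_\sigma}$ to trim the exponents to their minimum. The only difference is cosmetic: you make explicit the observation that $\pdv{f}{x_1}$ has no poles along non-exceptional primes through $z_\sigma$, which the paper leaves implicit in its "reduce until no further reduction is possible" phrasing.
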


\begin{prop}\label{val_diff_ineq}
Let $u_1,\dots,u_n$ be a system of parameters at $a_\sigma\in X_\sigma$. Let $D$ be the divisor induced by the local blowup with center $C=\Z(u_1,\dots,u_k)$ for some $1\leq k\leq n$. Let $f\in\R(X)$ and $1\leq i\leq n$. Then
\begin{align*}
    \ord_D\left(\pdv{f}{u_i}\right)&\geq \ord_D(f)-1 \text{ for }i\leq k\\
    \ord_D\left(\pdv{f}{u_i}\right)&\geq \ord_D(f) \text{ for }i>k
\end{align*}
\begin{proof}

Write $f$ as $f=\frac{p}{q}$ with $p,q\in \RR(X_\sigma)_C$. Since
\begin{equation*}
     \ord_D\left(\frac{\pdv{f}{u_i}}{f}\right)=\ord_D\left(\frac{\pdv{p}{u_i}}{p}-\frac{\pdv{q}{u_i}}{q}\right)\geq \min\left(\ord_D\left(\frac{\pdv{p}{u_i}}{p}\right),\ord_D\left(\frac{\pdv{q}{u_i}}{q}\right)\right)
\end{equation*}
it is enough to prove the proposition in the cases $f=p$ and $f=q$, so we assume $f\in \RR(X_\sigma)_C$.

Let $\mathfrak{m}$ denote the maximal ideal of $\RR(X_\sigma)_C$. It is in general true for a regular local ring that the order at the divisor $D$ induced by the blowup at $\mathfrak{m}$ is equal to the order $\ord_\mathfrak{m}$ (See \cite[Example 10.3.1]{swanson-huneke}). Hence we just need to check that
\begin{align*}
    \ord_\mathfrak{m}\left(\pdv{f}{u_i}\right)&\geq \ord_\mathfrak{m}(f)-1 \text{ for }i\leq k,\\
    \ord_\mathfrak{m}\left(\pdv{f}{u_i}\right)&\geq \ord_\mathfrak{m}(f) \text{ for }i>k.
\end{align*}
It now suffices to consider these inequalities when $f$ is a generator of $\mathfrak{m}^{\ord_\mathfrak{m}(f)}$, i.e. a monomial in the variables $u_1,\dots,u_k$. In this case they become completely trivial.
\end{proof}
\end{prop}

\begin{defi}
Let $\sigma:X_\sigma\rightarrow X$ be an admissible multiblowup and let $D$ be a divisor on $X_\sigma$. Let $a\in\sigma(D)$ and let $x_1,\dots,x_n$ be a system of parameters at $a$. We define the following quantity, which we call \emph{the differential multiplicity} of $D$:
\begin{equation*}
    k_D:=\sup \left(\ord_D(f)-\min_i\ord_D\left(\pdv{f}{x_i}\right)\right)
\end{equation*}
where the supremum is taken over all $f\in\R(X)$.
\end{defi}

\begin{rem}
This definition is clearly related to the valuation inequality from Section \ref{Divisorial valuations}, however we are not able to make much use of this fact.
\end{rem}

At first it may seem that this notion is not well defined, let us show that it is not the case. 

Firstly if $y_1,\dots,y_n$ is another system of parameters at $a$ then by the chain rule
\begin{equation*}
    \ord_D\left(\pdv{f}{x_i}\right)=\ord_D\left(\sum_j\pdv{f}{y_j}\pdv{y_j}{x_i}\right)\geq \min_j \ord_D\left(\pdv{f}{y_j}\right) \text{ for all }i
\end{equation*}
and analogously in the other direction. This shows that the definition is independent on the choice of the system of parameters at $a$.

We now show that it is not dependent on choice of $a\in \sigma(D)$ either. Suppose that it is, so it can be considered as a function $k_D(a)$. Choose any $a'\in\sigma(D)$ together with a system of parameters $x_1,\dots,x_n$ at $a'$. If $A$ is any neighbourhood of $a'$ regular for this system then $x_1-x_1(a),\dots,x_n-x_n(a)$ make a system of parameters at every $a\in A$. This shows that $k_D$ is constant on $A\cap \sigma(D)$, so more generally it is locally constant on $\sigma(D)$ in the Zariski topology. Since the set is irreducible, $k_D$ is indeed a constant function of $a$.

We now recursively derive a useful bound on the value of the differential multiplicity:

\begin{prop}\label{recursive_k_mult}
Let $\sigma:X_\sigma\rightarrow X$ be a multiblowup and $\sigma_1:X_{\sigma_1}\rightarrow X_\sigma$ be a blowup with center $C$ inducing a divisor $D$ such that $\sigma\circ\sigma_1$ is admissible. Let $D_1,\dots,D_k$ be all the exceptional divisors of $\sigma$ containing $C$. Then
\begin{equation*}
    k_D\leq k_{D_1}+\dots+k_{D_k}+1
\end{equation*}
In particular if the collection $D_1,\dots,D_k$ is empty $k_D= 1$. 

Moreover, whenever $\dim(C)=n-k$ an even stronger bound holds:
\begin{equation*}
    k_D\leq k_{D_1}+\dots+k_{D_k}
\end{equation*}
\begin{proof}
To begin with notice that it follows from Proposition \ref{some_ineq_deriv} that the differential multiplicity is always positive. Choose a point $a_\sigma\in C$ lying on none other divisors than $D_1,\dots,D_k$. Let $u_1,\dots,u_n$ be a system of parameters at $a_\sigma$ with $u_i$ locally generating $D_i$ for $1\leq i\leq k$. Let $a:=\sigma(a_\sigma)$ and let $x_1,\dots,x_n$ be a system of parameters at $a$. By Observation \ref{diff_localmon} and the definition of differential multiplicity we have
\begin{equation}\label{inductive_bound_eq1}
    u_1^{k_{D_1}}\dots u_k^{k_{D_k}}\pdv{u_i}{x_j}\in\RR(X_\sigma)_{a_\sigma}\text{ for all }i,j.
\end{equation}
In particular
\begin{equation*}
    \ord_D\left(\pdv{u_i}{x_j}\right)\geq -k_{D_1}-\dots-k_{D_k}.
\end{equation*}
By the chain rule and Proposition \ref{val_diff_ineq} we get for all $f\in\R(X)$:
\begin{equation*}
    \ord_D\left(\pdv{f}{x_j}\right)\geq \min_i\ord_D\left(\pdv{f}{u_i}\right)+\ord_D\left(\pdv{u_i}{x_j}\right)\geq \ord_D(f)-1-k_{D_1}-\dots-k_{D_k}
\end{equation*}
which proves the first part.
In the case when $\dim(C)=n-k$ for every $i\leq k$ by the definition of differential multiplicity we have
\begin{equation*}
    \ord_{D_i}\left(\pdv{u_i}{x_j}\right)\geq \ord_{D_i}(u_i)-k_{D_i}= 1-k_{D_i}
\end{equation*}
so the equation (\ref{inductive_bound_eq1}) can be improved to give the following
\begin{align*}
    u_1^{k_{D_1}}\dots u_i^{k_{D_i}-1}\dots u_k^{k_{D_k}}\pdv{u_i}{x_j}&\in\RR(X_\sigma)_{a_\sigma}\text{ for all }1\leq i\leq k,1\leq j\leq n, \\
    \ord_D\left(\pdv{u_i}{x_j}\right)&\geq -k_{D_1}-\dots-k_{D_k}+1.
\end{align*}
This implies that if $i\leq k$ then
\begin{equation*}
    \ord_D\left(\pdv{f}{u_i}\right)+\ord_D\left(\pdv{u_i}{x_j}\right)\geq \ord_D(f)-k_{D_1}-\dots-k_{D_k}
\end{equation*}
If $i>k$ then the above equation stays true by the second part of Proposition \ref{val_diff_ineq}. The conclusion follows similarly to the first part.
\end{proof}
\end{prop}

\begin{rem}
The author has verified that the bounds above are optimal in the case $\dim(X)=2$ although they fail to do so in higher dimensions. It could be of some interest to find out precisely under what conditions does the stronger bound hold, as it would be a step towards extending the results of this paper outside the two-dimensional case.
\end{rem}

Let us now see an example:
\begin{ex}
Let $a\in X$ and let $D\subset X_\sigma$ be a divisor satysfying $\sigma(D)=\{a\}$. Let $x_1,\dots,x_n$ be a system of parameters at $a$. The differential multiplicity of $D$ satisfies the obvious inequality
\begin{equation*}
    k_D\geq \max(\ord_D(x_1),\dots,\ord_D(x_n)).
\end{equation*}
It may happen that the inequality becomes an equality, this is the case for example if $X=\R^n$ and the valuation $\ord_D$ is a monomial one. However, strict inequality may occur here as well. For example, with $X=\R^2$ there is a sequence of three blowups, which in a local chart is of the form
\begin{equation*}
    X_\sigma\ni (u,v)\mapsto ((uv+1)u^2,u)\in \R^2.
\end{equation*}
Taking $D$ as the exceptional divisor $u=0$ we get
\begin{align*}
    \ord_D(x)&=\ord_D((uv+1)u^2)=2,\\
    \ord_D(y)&=\ord_D(u)=1,\\
    \ord_D(x-y^2)&=\ord_D(u^3v)=3.
\end{align*}
Differentiating $x-y^2$ with respect to $x$ drops its valuation by $3$, showing that the differential multiplicity $k_D$ of $D$ is equal to at least $3$ (in this case using Proposition \ref{recursive_k_mult} one can show that it is precisely equal to $3$).
\end{ex}

The following definition plays a crucial role in the remainder of the paper:

\begin{defi}
Let $f\in \R(X)$. We say that $f$ is \emph{relatively $\RR^k$-flat} on $X_\sigma$ if $\ord_D(f)> kk_D$ holds for every exceptional divisor $D$ of $\sigma$ on $X_\sigma$.
\end{defi}

\begin{obs}\label{rel_flat_are_flat}
If $f\in\RR(X_\sigma)$ is relatively $\RR^k$-flat on $X_\sigma$ then it is $\RR^k$-flat (on $X$).
\begin{proof}
Take $a\in X$ and a system of parameters $x_1,\dots,x_n$ at $a$. Choose a representation $f=\frac{p}{q}$ as an irreducible fraction at $a$. By Proposition \ref{monomial_denom_representation} there exists an open neighbourhood $A$ of $a$ regular for the system $x_1,\dots,x_n$ such that $q$ is rsnc at every point of $\sigma^{-1}(A)$. This means that the set of valuations at the exceptional divisors of $\sigma\vert_{\sigma^{-1}(A)}$ contains all the divisorial valuations associated to $q\vert_A$. On the other hand by the definition of differential multiplicity for every such divisor $D$:
\begin{equation*}
    \ord_D(p)+ k\ord_D\left(\pdv{q}{x_i}\right)> kk_D+\ord_D(q)+k(\ord_D(q)-k_D)=(k+1)\ord_D(q)
\end{equation*}
so $\frac{p}{q}$ is an $\RR^k$-flat representation on $A$. 
\end{proof}
\end{obs}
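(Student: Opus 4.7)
The plan is to reduce local r$k$-flatness at an arbitrary point $z\in X$ to a valuation-theoretic inequality on a suitable blowup, and then feed in both the hypothesis and the definition of $k_d$.

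First I would fix $z\in X$ together with a system of parameters $x_1,\dots,x_n$. Writing $f$ as an irreducible fraction $f=q/r$ in $\RR(X)_z$, Lemma~\ref{rel_prime_set} followed by Proposition~\ref{monomial_denom_representation} furnishes a Zariski neighbourhood $A$ of $z$, regular for $x_1,\dots,x_n$, on which $q,r\in\RR(A)$ and such that $r$ is rsnc with respect to $\sigma$ at every point of $\sigma^{-1}(A)$. Consequently, every prime component of the divisor of $r|_A$ pulled back to $\sigma^{-1}(A)$ is an exceptional divisor of $\sigma$, and in particular the finite collection of r-valuations associated with $r$ (in the sense of Section~\ref{Regulous valuations}) consists of such exceptional divisors.

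Next I would estimate $v_{r,A}(q\|Dr\|^k)$. For each r-valuation $\ord_d$ of $r$ on $A$, the divisor $d$ is exceptional for $\sigma|_{\sigma^{-1}(A)}$, so by the definition of r-multiplicity applied to the function $r$ itself,
\begin{equation*}
    \min_i \ord_d\!\left(\pdv{r}{x_i}\right) \;\geq\; \ord_d(r)-k_d.
\end{equation*}
On the other hand the hypothesis of relative r$k$-flatness gives $\ord_d(f)>kk_d$, i.e.\ $\ord_d(q)>\ord_d(r)+kk_d$. Combining,
\begin{equation*}
    \ord_d\!\left(q\|Dr\|^k\right) \;>\; \bigl(\ord_d(r)+kk_d\bigr) + k\bigl(\ord_d(r)-k_d\bigr) \;=\; (k+1)\,\ord_d(r).
\end{equation*}
Since this holds for every r-valuation of $r$ on $A$, the formula $v_{r,A}(\cdot)=\min_d \ord_d(\cdot)/\ord_d(r)$ yields $v_{r,A}(q\|Dr\|^k)>k+1$, which is precisely the condition that $q/r$ be an r$k$-flat representation of $f$ on $A$. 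As $z\in X$ was arbitrary, $f$ is r$k$-flat.

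The only real subtlety is the passage from the algebraic input (the r$k$-flatness condition, stated in terms of all exceptional divisors of $\sigma$ on $X_\sigma$) to the analytic output (the r-valuations of $r$ over the affine chart $A$). This is handled cleanly by Proposition~\ref{monomial_denom_representation}: it guarantees that, locally at $z$, the relevant valuations really do appear among the exceptional divisors of $\sigma$, so that the hypothesis $\ord_d(f)>kk_d$ is applicable to each of them. Everything else is a single one-line valuation estimate.
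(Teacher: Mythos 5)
Your proof is correct and follows essentially the same path as the paper's: both reduce to the valuation inequality $\ord_d(q\|Dr\|^k)>(k+1)\ord_d(r)$ over each r-valuation of $r$ on $A$, using Proposition~\ref{monomial_denom_representation} to ensure these are exceptional divisors of $\sigma$ and the definition of $k_d$ to bound $\ord_d(\pdv{r}{x_i})$ from below. You are slightly more careful in tracking the strict inequality coming from the hypothesis $\ord_d(f)>kk_d$ (which is indeed what the conclusion $v_{r,A}(q\|Dr\|^k)>k+1$ requires), but otherwise the arguments are the same.
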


Similarly we make an analogous definition:
\begin{defi}
Let $f\in \R(X)$. We say that $f$ is \emph{relatively $\Rr{k}$-flat} on $X_\sigma$ if $\ord_D(f)\geq kk_D$ holds for every exceptional divisor $D$ of $\sigma$ on $X_\sigma$.
\end{defi}

\begin{obs}
If $f\in\RR(X_\sigma)$ is relatively $\Rr{1}$-flat on $X_\sigma$ then it is $\Rr{1}$-flat (on $X$).
\begin{proof}
Similar to the proof of the last observation.
\end{proof}
\end{obs}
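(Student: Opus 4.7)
The plan is to imitate the proof of the preceding Observation \emph{mutatis mutandis}, replacing strict inequalities by non-strict ones. So I would start with a point $z\in X$ and a system of parameters $x_1,\dots,x_n$ at $z$, pick an irreducible representation $f=\frac{q}{r}$ in $\RR(X)_z$, and invoke Proposition \ref{monomial_denom_representation} to choose a Zariski neighbourhood $A$ of $z$ that is regular for $x_1,\dots,x_n$ and on which $r$ is rsnc at every point of $\sigma^{-1}(A)$.

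The key observation is that the r-valuations associated with $r\restriction_A$ all come from exceptional divisors of $\sigma\restriction_{\sigma^{-1}(A)}$, because $r$ is rsnc there. Fix such a divisor $d$. By the definition of the r-multiplicity we have
\begin{equation*}
    \min_i\ord_d\!\left(\pdv{r}{x_i}\right)\geq \ord_d(r)-k_d,
\end{equation*}
while the relative r$\underline{1}$-flatness of $f$ gives $\ord_d(f)\geq k_d$, i.e.\ $\ord_d(q)\geq \ord_d(r)+k_d$. Adding the two yields
\begin{equation*}
    \ord_d(q)+\min_i\ord_d\!\left(\pdv{r}{x_i}\right)\geq 2\ord_d(r),
\end{equation*}
which, since this holds at every r-valuation of $r$ on $A$, is equivalent to $v_{r,A}(q\|Dr\|)\geq 2$. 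Thus $\frac{q}{r}$ is an r$\underline{1}$-flat representation of $f$ on $A$, and since $z$ was arbitrary, $f$ is r$\underline{1}$-flat on $X$.

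There is no real obstacle here: the argument is parallel to the r$k$-flat case, and the only difference is that the strict inequality $\ord_d(f)>kk_d$ gets replaced throughout by the non-strict $\ord_d(f)\geq k_d$, which matches the non-strict condition $v_{r,A}(q\|Dr\|)\geq 2$ in the definition of r$\underline{1}$-flatness (as opposed to the strict $>k+1$ in the r$k$-flat definition).
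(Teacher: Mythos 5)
Your proof is correct and mirrors the paper's argument for the r$k$-flat case exactly: pick an irreducible local fraction $f=q/r$, use Proposition \ref{monomial_denom_representation} to get a neighbourhood on which $r$ is rsnc, then combine the r-multiplicity bound on $\min_i\ord_d(\pdv{r}{x_i})$ with the relative r$\underline{1}$-flatness bound on $\ord_d(q)-\ord_d(r)$ to obtain $v_{r,A}(q\|Dr\|)\geq 2$. The only change from the preceding observation is replacing $k$ by $1$ and the strict inequalities by non-strict ones, which is precisely what the paper leaves to the reader.
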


\subsection{Sums of squares}
We are ready to start some investigation on sums of squares in rings of regulous function. Firstly we require the following:
\begin{lem}\label{snc_is_smq}
Let $f\in\RR(X)$ be positive semi-definite and a simple normal crossing. Then it can be written as a sum of squares of elements from $\RR(X)$.
\begin{proof}
Locally at every point $a\in X$, $f$ can be written as 
\begin{equation*}
    f=u_1^{2\alpha_1}\dots u_n^{2\alpha_n} r
\end{equation*}
for some system of parameters $u_1,\dots,u_n$. As $r$ is a unit in $\RR(X)_a$. it must be a positive definite element of $\RR(X)_a$, so by the Positivestellensatz it is a sum of squares of elements of $\RR(X)_a$. Hence the same is true for $f$. Consider the set
\begin{equation*}
    J:=\{g\in\RR(X):fg^2\text{ is a sum of squares of elements of }\RR(X)\}.
\end{equation*}
We have just shown that for every $a\in X$ there exists $g\in J$ with $g(a)\neq 0$. By Noetherianity we can find a finite number of elements $g_1,\dots,g_k\in J$ with $\Z(g_1,\dots,g_k)=\emptyset$. Now $f(g_1^2+\dots+g_k^2)^2$ is a sum of squares in $\RR(X)$ and the conclusion follows.
\end{proof}
\end{lem}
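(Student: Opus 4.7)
My approach is a standard local-to-global argument via Noetherianity, driven by the snc structure. I introduce the ideal
\begin{equation*}
J := \{g \in \RR(X) : fg^{2} \text{ is a sum of squares in } \RR(X)\}.
\end{equation*}
The plan is to show that $\Z(J) = \emptyset$. Granted this, Noetherianity furnishes $g_{1},\dots,g_{k} \in J$ with no common zero, so that $s := g_{1}^{2}+\dots+g_{k}^{2}$ is nowhere vanishing on $X$; on a non-singular affine real variety every nowhere vanishing regular function is a unit (its pointwise reciprocal is again regular), so $1/s \in \RR(X)$. The identity
\begin{equation*}
fs^{2} \;=\; \bigl(fg_{1}^{2}+\dots+fg_{k}^{2}\bigr)\cdot s
\end{equation*}
exhibits $fs^{2}$ as a product of two sums of squares in $\RR(X)$, hence itself a sum of squares (by the standard product identity $(\sum a_{i}^{2})(\sum b_{j}^{2}) = \sum_{i,j}(a_{i}b_{j})^{2}$); dividing through by the square of the unit $s$ then presents $f$ itself as a sum of squares in $\RR(X)$.

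The core of the argument is therefore to produce, for each $z \in X$, some $g \in J$ with $g(z) \neq 0$. At such a $z$ the snc hypothesis provides a regular system of parameters $u_{1},\dots,u_{n}$ at $z$ with
\begin{equation*}
f \;=\; u_{1}^{\alpha_{1}}\cdots u_{n}^{\alpha_{n}}\cdot r, \qquad r \in \RR(X)_{z}^{*}.
\end{equation*}
The global psd assumption forces every $\alpha_{i}$ to be even: otherwise, moving along a Euclidean arc near $z$ on which the remaining parameters are bounded away from zero and $u_{i}$ changes sign, the factor $u_{i}^{\alpha_{i}}$ would flip sign while the unit $r$ stays close to $r(z)$, contradicting $f \geq 0$. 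Writing $\alpha_{i} = 2\beta_{i}$, we obtain $f = (u_{1}^{\beta_{1}}\cdots u_{n}^{\beta_{n}})^{2}\,r$ with $r(z) > 0$.

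The remaining step, and the main potential obstacle, is to represent $r$ (and hence $f$) as a sum of squares in the local ring $\RR(X)_{z}$. This is the content of the local Positivstellensatz: the positive-definite unit $r$ of the regular local ring $\RR(X)_{z}$ can be written $r = p_{1}^{2}+\dots+p_{m}^{2}$ with $p_{i} \in \RR(X)_{z}$, whence $f = \sum_{i}(u_{1}^{\beta_{1}}\cdots u_{n}^{\beta_{n}} p_{i})^{2}$ in $\RR(X)_{z}$. Clearing a single common denominator $q \in \RR(X)$ with $q(z) \neq 0$ for the germs $u_{1}^{\beta_{1}}\cdots u_{n}^{\beta_{n}} p_{i}$ yields $P_{1},\dots,P_{m} \in \RR(X)$ with $fq^{2} = \sum_{i}P_{i}^{2}$, so $q \in J$ with $q(z) \neq 0$. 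This last invocation of the local Positivstellensatz is the only nontrivial ingredient: it packages the interaction between the global psd condition and the algebra of $\RR(X)_{z}$, and everything else is an exercise in clearing denominators and applying the Weak Nullstellensatz in disguise.
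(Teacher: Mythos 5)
Your proposal follows the paper's argument step for step: the same ideal $J$, the same local snc representation $f=u_1^{2\alpha_1}\cdots u_n^{2\alpha_n}r$ with $r$ a positive unit, the same appeal to the (local) Positivstellensatz to get a sum-of-squares representation in $\RR(X)_z$, and the same Noetherianity/no-common-zero globalization. The only differences are cosmetic: you spell out why the global psd hypothesis forces the exponents to be even and why the product of two sums of squares is again a sum of squares, details the paper leaves implicit.
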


\begin{cor}\label{squares_cor}
Let $f\in \RR(X)$ be psd. Suppose that there exists an admissible multiblowup $\sigma:X_\sigma\rightarrow X$ such that $f$ is snc and relatively $\RR^{2k}$-flat on $X_\sigma$. Then $f$ is a sum of squares of $\RR^k$-flat functions on $X$. Similarly if $f$ is relatively $\Rr{2}$-flat on $X_\sigma$, then it is a sum of squares of $\Rr{1}$-flat functions on $X$.
\begin{proof}
By Lemma \ref{snc_is_smq}, $f$ is a sum of squares in $\RR(X_\sigma)$
\begin{equation*}
    f=f_1^2+\dots+f_m^2.
\end{equation*}
Each of the functions $f_i$ is necessarily relatively $\RR^k$-flat on $X_\sigma$, hence by Proposition \ref{rel_flat_are_flat} $\RR^k$-flat on $X$. The second part follows similarly.
\end{proof}
\end{cor}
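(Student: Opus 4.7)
The plan is to reduce to Lemma \ref{snc_is_smq} by pulling back to $X_\sigma$ and then to transfer the resulting sum-of-squares decomposition back to $X$ using Observation \ref{rel_flat_are_flat}. The pivotal ingredient is a valuation argument showing that the squared summands, produced by Lemma \ref{snc_is_smq}, inherit the flatness hypothesis on each exceptional divisor.

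First, since $f$ is psd and snc on $X_\sigma$, Lemma \ref{snc_is_smq} applied to the variety $X_\sigma$ yields a representation
\begin{equation*}
    f = f_1^2 + \dots + f_m^2, \quad f_i \in \RR(X_\sigma).
\end{equation*}
So the whole task comes down to verifying that each $f_i$ is relatively r$k$-flat on $X_\sigma$; once this is known, Observation \ref{rel_flat_are_flat} turns each $f_i$ into an r$k$-flat function on $X$, and the decomposition $f = \sum f_i^2$ is exactly what is wanted.

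Next I would fix an exceptional divisor $d$ on $X_\sigma$ and compare $\ord_d(f)$ with $\min_i \ord_d(f_i)$. Setting $a_i := \ord_d(f_i)$ and $a := \min_i a_i$, one writes $f_i = u^{a_i} g_i$ with $g_i$ regular at a generic point of $d$ and $u$ a local generator of $d$, so that
\begin{equation*}
    f = u^{2a}\sum_i u^{2(a_i - a)} g_i^2.
\end{equation*}
At a generic (real) point of $d$ the sum $\sum_i u^{2(a_i-a)} g_i^2$ reduces to $\sum_{a_i = a} g_i^{\,2}$, a nonzero sum of squares in a real residue field, hence nonzero. Therefore $\ord_d(f) = 2a = 2\min_i \ord_d(f_i)$. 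Combining with the hypothesis $\ord_d(f) > 2k\,k_d$ gives $\min_i \ord_d(f_i) > k\,k_d$, so each $f_i$ satisfies $\ord_d(f_i) > k\,k_d$ on every exceptional divisor $d$, meaning that each $f_i$ is relatively r$k$-flat on $X_\sigma$.

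Finally, Observation \ref{rel_flat_are_flat} promotes each $f_i$ to an r$k$-flat function on $X$, giving the desired representation of $f$ as a sum of squares of r$k$-flat functions. For the second statement, one follows the same procedure verbatim, replacing the strict inequality $\ord_d(f) > 2k\,k_d$ by $\ord_d(f) \geq 2k_d$, concluding $\ord_d(f_i) \geq k_d$ and invoking the analogous observation for r$\underline{1}$-flat functions. The only subtle point in the whole argument is the identity $\ord_d(f) = 2\min_i \ord_d(f_i)$; everything else is bookkeeping.
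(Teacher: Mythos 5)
Your proposal is correct and follows essentially the same route as the paper: apply Lemma \ref{snc_is_smq} on $X_\sigma$, show each $f_i$ is relatively r$k$-flat, and transfer via Observation \ref{rel_flat_are_flat}. You usefully spell out the step the paper leaves implicit — that $\ord_d(f)=2\min_i\ord_d(f_i)$, via formal reality of the residue field of $d$, so the leading squares cannot cancel.
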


To prove Theorems \ref{sos1} and \ref{sos2} it now suffices to find an appropriate resolution of singularities making $f$ a simple normal crossing simultaneously keeping track of its orders at the exceptional divisors. It turns out that what we require can be found in \cite{resolution_snc}:

\begin{thm}[\cite{resolution_snc}]
Let $f\in\RR(X)$. There exists an admissible multiblowup $\sigma=\sigma_1\circ\dots\circ\sigma_t:X_\sigma\rightarrow X$ such that
\begin{enumerate}
    \item $f$ is a simple normal crossing on $X_\sigma$,
    \item for all $i$, $f$ is not a simple normal crossing at any of the points of the center of $\sigma_i$.
\end{enumerate}
\end{thm}

In this setting:

\begin{obs}\label{existence_of_resolution}
If $f$ is a sum of $2k$-th powers of elements of $\R(X)$ then it is relatively $\Rr{2k}$-flat on $X_\sigma$. If besides that every zero of $f$ is also a zero of its derivatives up to order $2k$ then it is relatively $\RR^{2k}$-flat on $X_\sigma$.
\begin{proof}
Consider the first part; we prove it inductively by showing that $\ord_D(f)\geq 2kk_D$ holds for every exceptional divisor of $\sigma_1\circ\dots\circ\sigma_j$. Suppose that the result holds for $j-1$, we have to show that it holds for the divisor $D$ induced by $\sigma_j$. Let $D_1,\dots,D_l$ be all the exceptional divisors containing the center of $\sigma_j$ and let $a_\sigma$ be a point of the center lying on none other divisors than these. Let $u_1,\dots, u_n$ be a regular system at $a_\sigma$ with $u_i$ locally generating $D_i$ (for $1\leq i\leq l$). By induction $f$ can locally be written as
\begin{equation}\label{proof_of_squares_eq1}
    f=u_1^{2kk_{D_1}}\dots u_l^{2kk_{D_l}}r.
\end{equation}
By the assumptions on the resolution $r$ is not a unit at $a_\sigma$, even further it is not a unit at any point of the center of $\sigma_j$ at which $u_1,\dots,u_n$ make a regular system of parameters. This means that $\ord_D(r)>0$. As $r$ is a sum of $2k$-th powers in $\R(X)$ we actually must have $\ord_D(r)\geq 2k$. Proposition \ref{recursive_k_mult} finishes the induction:
\begin{equation*}
    \ord_D(f)\geq 2kk_{D_1}+\dots+2kk_{D_l}+2k\geq 2kk_D.
\end{equation*}
For the second part we use a similar inductive argument, with a strict inequality $\ord_D(f)>2kk_D$ instead of a non-strict one. Since $f$ is a sum of $2k$-th powers the orders of $f$ at the divisors must be divisible by $2k$ so the equation (\ref{proof_of_squares_eq1}) now turns into
\begin{equation*}
    f=u_1^{2kk_{D_1}+2k}\dots u_l^{2kk_{D_l}+2k}r.
\end{equation*}
The induction is now finished in a similar manner:
\begin{equation*}
    \ord_D(f)\geq (2kk_{D_1}+2k)+\dots+(2kk_{D_l}+2k)+2k> 2kk_D.
\end{equation*}
This argument fails if $l=0$; we must then use the assumption on the derivative of $f$ to realise that the order of $f$ at the newly created divisor is greater than $2kk_D$ (as $k_D=1$ by Proposition \ref{recursive_k_mult}).
\end{proof}
\end{obs}

This together with Corollary \ref{squares_cor} immediately proves Theorems \ref{sos1} and \ref{sos2}.

\begin{rem}\label{codim2_existence}
We give another consequence of the existence of this strong resolution of singularities: if the zero set of $f$ is of codimension at least two then $f$ is relatively $\Rr{2}$-flat on $X_\sigma$. Indeed, for the above proof to work we just need to verify that the order of $f$ at any given divisor $D$ is even. This follows from the fact that $f$ is locally of constant sign on $X$, hence it is also locally of constant sign on $X_\sigma$ so it cannot switch signs when crossing $D$. This fact will be important in Section \ref{An additional result}.
\end{rem}

\section{$\RR^k$-flat decomposition in dimension 2}\label{RR^k-flat decomposition in dimension 2}
This is the most technical section of the paper, before jumping into let us first gain some motivation.

$\RR^k$-flat functions possess this nice property that it is quite easy to generate them; if we want construct an $\RR^1$-flat function with denominator equal to let us say $x^2+y^6$, then we just need to check what the divisorial valuations associated to it are (there is only the obvious monomial one, with differential multiplicity equal to three) and just like that we instantly know that all the functions below are $\RR^1$-flat:
\begin{equation*}
    \frac{x^3y}{x^2+y^6},\frac{x^2y^4}{x^2+y^6},\frac{xy^7}{x^2+y^6},\frac{y^{10}}{x^2+y^6}.
\end{equation*}
Now if we want to generate some less trivial $k$-regulous functions we can take the $\RR^k$-flat ones as basis and then consider the ring they span. Actually one can check easily that they are closed under multiplication, so it suffices to consider only finite sums. This way we can create really non-trivial examples like:
\begin{equation*}
    1+\frac{x^4+xy^7}{x^2+y^6}+\frac{x^6}{(x^2+y^2)(x^2+y^4)}=\dots\in\RR^1(\R^2)
\end{equation*}

Actually if one thinks about it, it is quite hard to come up with another way of generating $k$-regulous functions, possibly with the sole exception of inverting sums of the form $1+f_1^2+\dots+f_m^2$. Because of that, it seems reasonable to ask whether every $k$-regulous function can be written as a finite sum of $\RR^k$-flat functions. In this section we will answer this question affirmatively in dimension $2$. 

From now on we are only interested in the two dimensional case; we constantly assume that $\dim(X)=2$. In two dimensions every sequence of non-trivial blowups is admissible, so we might just speak of \emph{multiblowups} instead of \emph{admissible multiblowups}.

\begin{obs}\label{invert_chain_rule}
Let $\sigma:X_\sigma\rightarrow X$ be a multiblowup and let $a_\sigma\in X_\sigma$. Let $x,y$ and $u,v$ be two systems of parameters at $\sigma(a_\sigma)$ and $a_\sigma$ respectively. The following equality holds for $f\in\R(X)$:
\begin{equation*}
    \left(\pdv{x}{u}\pdv{y}{v}-\pdv{x}{v}\pdv{y}{u}\right)\pdv{f}{y}=\pdv{f}{v}\pdv{x}{u}-\pdv{f}{u}\pdv{x}{v}
\end{equation*}
\begin{proof}
Follows from the chain rule and the formula for matrix inverse.
\end{proof}
\end{obs}

By a \emph{Nash system of parameters at a point $a_\sigma\in X_\sigma$} we mean two Nash functions $u,v$ defined on a Euclidean neighbourhood of $a_\sigma$ in $X_\sigma$ such that $(u,v)$ is a local diffeomorphism at $a_\sigma$ taking $a_\sigma$ to $(0,0)$.

\begin{prop}\label{matrix_det_ineq}
Let $\sigma:X_\sigma\rightarrow X$ be a multiblowup and let $a_\sigma\in X_\sigma$ lie on an exceptional divisor $D$ of $\sigma$. Let $a:=\sigma(a_\sigma)$ and let $x,y$ be a system of parameters at $a$. Let $u,v$ be a Nash system of parameters at $a_\sigma$, with $\Z(u)$ locally equal to $D$ on a neighbourhood of $a_\sigma$. Define 
\begin{equation*}
    \gamma_1:=k_D+\min(\ord_D(x),\ord_D(y))-1
\end{equation*}
The following function admits a Nash extension from its natural domain to a Euclidean neighbourhood of $a_\sigma$:
\begin{equation*}
    \frac{\pdv{x}{u}\pdv{y}{v}-\pdv{x}{v}\pdv{y}{u}}{u^{\gamma_1}}
\end{equation*}

Moreover, if $a$ lies on intersection of $D$ with another exceptional divisor $G$, and $\Z(v)$ locally equals $G$ then after defining
\begin{equation*}
    \gamma_2:=k_G+\min(\ord_G(x),\ord_G(y))-1    
\end{equation*}
the claim is also true for 
\begin{equation*}
    \frac{\pdv{x}{u}\pdv{y}{v}-\pdv{x}{v}\pdv{y}{u}}{u^{\gamma_1} v^{\gamma_2}}
\end{equation*}
\begin{proof}
We firstly consider the case when $u,v$ are algebraic, i.e. they belong to $\RR(X_\sigma)_{a_\sigma}$. Take $f$ to be a function attaining the bound of the differential multiplicity of $D$, i.e. a rational function satisfying
\begin{equation*}
    k_D=\ord_D(f)-\min\left(\ord_D\left(\pdv{f}{x}\right),\ord_D\left(\pdv{f}{y}\right)\right)
\end{equation*}
Without loss of generality $\min(\ord_D(\pdv{f}{x}),\ord_D(\pdv{f}{y}))=\ord_D(\pdv{f}{y})$. By Proposition \ref{val_diff_ineq}
\begin{align*}
    \ord_D\left(\pdv{f}{v}\pdv{x}{u}\right)&\geq \ord_D(f)+\ord_D(x)-1,\\
    \ord_D\left(\pdv{f}{u}\pdv{x}{v}\right)&\geq \ord_D(f)+\ord_D(x)-1.
\end{align*}
Hence by Observation \ref{invert_chain_rule}
\begin{equation*}
    \ord_D\left(\pdv{x}{u}\pdv{y}{v}-\pdv{x}{v}\pdv{y}{u}\right)\geq \ord_D(f)-\ord_D\left(\pdv{f}{y}\right)+\ord_D(x)-1\geq \gamma_1.
\end{equation*}
Since $u$ locally generates $D$ and $\pdv{x}{u}\pdv{y}{v}-\pdv{x}{v}\pdv{y}{u}\in\RR(X_\sigma)_{a_\sigma}$ the conclusion follows. The second part also follows by this argument since we analogously have
\begin{equation*}
    \ord_G\left(\pdv{x}{u}\pdv{y}{v}-\pdv{x}{v}\pdv{y}{u}\right)\geq\gamma_2.
\end{equation*}

In the general case, if $u,v$ is a Nash system of parameters then choose $u',v'$ to be an algebraic one with $u'$ locally generating $D$ (and $v'$ locally generating $G$ when proving the second part). The formula for a change of coordinates gives us
\begin{equation*}
    \begin{bmatrix}
        \pdv{x}{u} & \pdv{x}{v} \\
        \pdv{y}{u} & \pdv{x}{v}
    \end{bmatrix}
    =
    \begin{bmatrix}
        \pdv{x}{u'} & \pdv{x}{v'} \\
        \pdv{y}{u'} & \pdv{x}{v'}
    \end{bmatrix}
    \begin{bmatrix}
        \pdv{u'}{u} & \pdv{u'}{v} \\
        \pdv{v'}{u} & \pdv{v'}{v}
    \end{bmatrix}
\end{equation*}
Taking determinants of these matrices finishes the proof.
\end{proof}
\end{prop}

We will have to prove three bounds on regulous functions satisfying certain valuation inequalities. We firstly state these results and then move onto the proofs.

\begin{prop}\label{first_anal_prop}
Let $a_\sigma\in X_\sigma$ lie on precisely one exceptional divisor $D$, locally generated by $u\in\RR(X_\sigma)_{a_\sigma}$. Let $f\in\RR^k(X)$ satisfy $\ord_D(f)> kk_D$. Then
\begin{equation*}
    v_{u,U}(f)>kk_D
\end{equation*}
holds for some Euclidean neighbourhood $U$ of $a_\sigma$.
\end{prop}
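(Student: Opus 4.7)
The plan is to combine the r-multiplicity with Taylor's theorem on $X$ and Łojasiewicz's inequality, transferring Euclidean vanishing from $X$ near $z:=\sigma(z_\sigma)$ to bounds in the coordinate $u$ on $X_\sigma$ near $z_\sigma$.

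First, I would iterate the defining inequality of the r-multiplicity, $\ord_d(\pdv{g}{x_i}) \geq \ord_d(g) - k_d$, to conclude that starting from $\ord_d(f) > kk_d$, every multi-index partial derivative $\partial^{(i)} f$ with $|i|\leq k$ taken in a regular system of parameters $(x,y)$ at $z$ satisfies
\begin{equation*}
    \ord_d(\partial^{(i)} f) > (k - |i|)\, k_d \geq 0.
\end{equation*}
Each $\partial^{(i)} f$ is continuous on $X$ (since $f\in\RR^k(X)$) and vanishes at the generic point of $d$, hence by continuity at every point of $\sigma(d)$; in particular $\partial^{(i)} f(z) = 0$ for all $|i|\leq k$. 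Therefore the $k$-th Taylor polynomial of $f$ at $z$ vanishes identically, and Taylor's theorem yields $|f(x,y)| = o((|x|+|y|)^k)$ as $(x,y)\to z$.

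Next, in Nash coordinates $(u,v)$ at $z_\sigma$ with $u$ locally generating $d$, I would write $x = u^a\tilde x$ and $y = u^b\tilde y$, where $a:=\ord_d x$, $b:=\ord_d y$, and $\tilde x,\tilde y$ are Nash units at $z_\sigma$ (after shrinking $U$ if necessary). This gives $|x|+|y|\leq C|u|^{\min(a,b)}$ on $U$, so $|f| = o(|u|^{k\min(a,b)})$. Applying Łojasiewicz's inequality to the semialgebraic function $|f|/(|x|+|y|)^k$, which is continuous on $X$ away from $z$ and tends to zero there, upgrades the little-$o$ to $|f| \leq C|u|^{(k+\delta)\min(a,b)}$ for some $\delta>0$, whence $v_{u,U}(f) \geq (k+\delta)\min(a,b)$.

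The main obstacle I expect is to close the gap to $v_{u,U}(f) > kk_d$ when $\min(a,b) < k_d$ strictly; testing the r-multiplicity definition against $f = x_i$ only gives $k_d \geq \max(a,b) \geq \min(a,b)$, with possible strict inequality (for instance at a point in a twice-iterated blowup chart, where one can have $a=1$, $b=2$, $k_d=2$). To resolve this one should exploit that $f$ is regulous on $X$ itself, not merely on $X_\sigma$. A natural route is to apply Proposition \ref{val_diff_ineq} iteratively to the Nash derivatives $\partial_u^j f$, obtaining $\ord_d(\partial_u^k f) \geq \ord_d(f) - k > k(k_d-1)$, and then, using that $\partial_u^j f$ vanishes on $d$ for every $j<k$ (which follows from the same iteration), to integrate $k$ times in $u$ via the Fundamental Theorem of Calculus; this should produce the required bound on $|f|$ purely in terms of $|u|$ and close the gap, possibly after further resolution of singularities to reach the rsnc (Nash) case where Euclidean and algebraic orders agree.
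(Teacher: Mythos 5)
Your first step (iterating the $r$-multiplicity bound to kill the $k$-th Taylor polynomial of $f$ at $z$, then pulling back via $|x|+|y|\leq C|u|^{\min(a,b)}$ and upgrading the little-$o$ by Łojasiewicz) is a valid argument, and you are right that it only yields $v_{u,U}(f)> k\min(a,b)$, which can fall strictly short of $k k_d$ (your example $a=1,\,b=2,\,k_d=2$ at the second-stage chart is correct). The issue is with the repair you propose for closing this gap. Iterating Proposition \ref{val_diff_ineq} gives $\ord_d(\partial_u^k f)>k(k_d-1)$, which is an \emph{algebraic} order of vanishing at the generic point of $d$. To run the fundamental theorem of calculus $k$ times in $u$ starting from the exceptional divisor, you need the \emph{Euclidean} rate $v_{u,U}(\partial_u^k f)>k(k_d-1)$ at the particular point $z_\sigma$, and passing from algebraic order at a generic point of $d$ to a Euclidean rate at the special point $z_\sigma$ is precisely the difficulty this proposition exists to overcome. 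The function $\partial_u^k f$ is continuous near $z_\sigma$, but it is not in $\RR^k(X)$, so you cannot re-apply the proposition; and mere continuity plus $\ord_d>m$ does not give $v_{u,U}\geq m$ (e.g.\ $u^3/(u^2+v^2)$ has $\ord_d=3$ but $v_{u,U}=1$). So the fix you sketch is circular and the gap remains open.

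The paper avoids integrating in $u$ altogether. It chooses the system $(x,y)$ at $z$ so that $(x,y)=(x)$ in $\RR(X_\sigma)_{z_\sigma}$ (hence $x$ is rsnc, $x=u^a r$, $\ord_d(x)\leq\ord_d(y)$), renormalizes $u$ to a Nash coordinate $u_1$ with $x=u_1^a$, so that $\pdv{x}{v_1}=0$ and the chain rule collapses to $\pdv{f}{v_1}=\pdv{f}{y}\pdv{y}{v_1}$. Then Proposition \ref{matrix_det_ineq} provides the Euclidean bound $v_{u,U}(\pdv{y}{v_1})\geq k_d$ (for a Nash function, algebraic order at $d$ does translate to Euclidean rate), while the inductive hypothesis applied to $\pdv{f}{y}\in\RR^{k-1}(X)$, which has $\ord_d>(k-1)k_d$, gives $v_{u,U}(\pdv{f}{y})>(k-1)k_d$. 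Multiplying yields $v_{u,U}(\pdv{f}{v_1})>kk_d$, and integrating in the \emph{non-exceptional} direction $v_1$ from a fixed $v_0$ chosen so that $f/u^{kk_d+1}$ is regular at $(0,v_0)$ handles the boundary term. Integrating transversally to the divisor is what makes the boundary term controllable and is where your $u$-direction approach cannot be salvaged without a new idea.
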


\begin{prop}\label{second_anal_prop}
Let $a_\sigma\in X_\sigma$ lie on precisely one exceptional divisor $D$, locally generated by $u\in\RR(X_\sigma)_{a_\sigma}$. Let $v\in\RR(X_\sigma)_{a_\sigma}$ be such that $u,v$ make a system of parameters at $a_\sigma$. Denote by $G$ the germ of $\Z(v)$. Let $0\leq l\leq k$ and let $f\in\RR^k(X)$ satisfy $\ord_D(f)> k k_D$ and $\ord_G(f)\geq l$. Then
\begin{equation*}
    v_{u,U}\left(\frac{f}{v^l}\right)>kk_D
\end{equation*}
holds for some Euclidean neighbourhood $U$ of $a_\sigma$ (in particular $\frac{f}{v^l}$ is locally bounded).
\end{prop}

\begin{prop}\label{third_anal_prop}
Let $a_\sigma\in X_\sigma$ lie on two exceptional divisors $D$ and $G$, locally generated by $u,v\in\RR(X_\sigma)_{a_\sigma}$ respectively. Let $f\in\RR^k(X)$ satisfy $\ord_D(f)>k k_D$, $\ord_G(f)>k k_G$. Then
\begin{equation*}
    v_{u^{k_D}v^{k_G},U}\left(f\right)>k
\end{equation*}
holds for some Euclidean neighbourhood $U$ of $a_\sigma$.
\end{prop}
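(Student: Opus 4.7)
The goal is a simultaneous two-divisor decay bound, roughly $|f| \lesssim |u|^{kk_d+\alpha}|v|^{kk_g+\alpha}$ near $z_\sigma$, whereas Propositions \ref{first_anal_prop} and \ref{second_anal_prop} deal only with one exceptional divisor at a time. My plan is to mirror the strategy behind those earlier results, combining the $k$-regulousness of $f$ (through its $k$-th order Taylor expansion at $z := \sigma(z_\sigma)$) with the algebraic vanishing expressed by $\ord_d(f) > kk_d$ and $\ord_g(f) > kk_g$.

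First I would observe that in dimension two, every exceptional divisor of an admissible multiblowup contracts to a single point, since the only non-trivial centers of blowups in a surface are points; in particular $\sigma(d) = \sigma(g) = \{z\}$. Setting $\xi := (x - x_0) \circ \sigma$ and $\eta := (y - y_0) \circ \sigma$, both $\xi$ and $\eta$ vanish on $d \cup g$ and therefore lie in the local ideal $(u) \cap (v) = (uv)$ at $z_\sigma$. Pulling back the $k$-th Taylor expansion of $f$,
\[
f\circ\sigma \;=\; \sum_{i+j \le k} a_{ij}\, \xi^i \eta^j \;+\; R\circ\sigma, \qquad |R(w)| = o(\|w - z\|^k),
\]
each Taylor monomial carries $(uv)^{i+j}$ as an algebraic factor. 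I would then sharpen this using Proposition \ref{matrix_det_ineq} together with the definition of the r-multiplicity to compare $\ord_d(\xi), \ord_d(\eta)$ and their counterparts on $g$ with $k_d$ and $k_g$. Combined with the strict inequalities $\ord_d(f\circ\sigma) > kk_d$ and $\ord_g(f\circ\sigma) > kk_g$, this should force $\sum_{i+j \le k} a_{ij}\, \xi^i \eta^j$ to carry the joint factor $u^{kk_d+\alpha_0} v^{kk_g+\alpha_0}$ for some $\alpha_0 > 0$, delivering the desired decay on the polynomial part.

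The main obstacle is controlling the Taylor remainder $R\circ\sigma$, which is continuous but not rational, so the valuations $\ord_d, \ord_g$ do not apply to it directly. The immediate estimate $|R\circ\sigma(w)| = o((|\xi(w)| + |\eta(w)|)^k) = o(|u(w)v(w)|^k)$, sharpened to $|R\circ\sigma| \le C|uv|^{k + \alpha_1}$ for some $\alpha_1 > 0$ via semialgebraicity and Łojasiewicz's inequality, already matches the target when $k_d = k_g = 1$. When $k_d > 1$ or $k_g > 1$, however, the isotropic $|uv|^{\cdot}$ bound falls short of the anisotropic target $|u|^{kk_d+\alpha}|v|^{kk_g+\alpha}$, and a finer argument is needed. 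I plan to close this gap by either (i) performing a further blowup at $z_\sigma$ that separates $d$ from $g$ and invoking Propositions \ref{first_anal_prop} and \ref{second_anal_prop} at points of the strict transforms (which then lie on at most one exceptional divisor), then descending back to $z_\sigma$; or (ii) iterating the Taylor decomposition, noting that the new remainder is itself $k$-regulous with the same valuation constraints, and concluding via the r-valuation machinery of Section \ref{Regulous valuations}.
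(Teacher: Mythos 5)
Your proposal correctly identifies the central obstacle—the anisotropic nature of the target bound $|u|^{kk_d+\alpha}|v|^{kk_g+\alpha}$ versus the isotropic $|uv|^{k+\alpha_1}$ control that a Taylor remainder estimate gives—but neither of your two suggested routes actually closes that gap, and the paper's proof proceeds quite differently.

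Regarding route (i): after blowing up at $z_\sigma$, the points where the new exceptional divisor meets the strict transforms of $d$ and $g$ still lie on \emph{two} exceptional divisors, so the problem does not reduce to the single-divisor Propositions \ref{first_anal_prop} and \ref{second_anal_prop}; it simply reappears at two new intersection points, and an infinite regress ensues. Route (ii) is stated too vaguely to assess, but it is unclear how iterating a Taylor expansion at $z$ would ever produce the required asymmetric weighting in $u$ and $v$, since every Taylor step supplies factors of $\xi, \eta$ symmetrically. There is also a smaller issue earlier on: it is not clear that comparing $\ord_d(\xi),\ord_d(\eta)$ to $k_d$ via Proposition \ref{matrix_det_ineq} forces the joint anisotropic factor on the polynomial part; the r-multiplicity is a statement about derivatives and one would need the full Jacobian identity, not just orders of $x-x_0$ and $y-y_0$, to get traction.

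The paper's proof bypasses the isotropic/anisotropic mismatch entirely through an inductive derivative estimate integrated along a carefully chosen curve. It picks $x$ generating $\I(X_\sigma/X)$ locally, writes $x = u^\alpha v^\beta r$, normalizes to the Nash system $(u_1, v_1)$ with $x = u_1^\alpha v_1^\beta$, applies the inductive hypothesis to $\pdv{f}{y}$ and Proposition \ref{matrix_det_ineq}, and then integrates $Df$ along the hyperbola-like curve $t \mapsto (u_0 t^\beta, v_0 t^{-\alpha})$—a level set of $x$. Because the curve holds $u_1^\alpha v_1^\beta$ fixed, the integrand's bound $|u_1|^{kk_d+\alpha-1+\epsilon}|v_1|^{kk_g+\beta-1+\epsilon}$ becomes integrable with the correct anisotropic exponents at the endpoints, which is exactly what a static Taylor estimate cannot deliver. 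This curve argument is the idea missing from your proposal.
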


\begin{proof}[Proof of Proposition \ref{first_anal_prop}]
We use induction on $k$ the case $k=0$ following from Łojasiewicz's inequality (\cite[Proposition 2.6.4]{Bochnak}). Choose a system of parameters $x,y$ at $a$ with $(x,y)=(x)$ in $\RR(X_\sigma)_{a_\sigma}$, and let $v\in\RR(X_\sigma)_{a_\sigma}$ be such that $u,v$ make a system of parameters at $a_\sigma$. It follows from Proposition \ref{rsnc_stays_after_blowup} that $x$ is rsnc at $a_\sigma$, i.e.
\begin{equation*}
    x=u^{\ord_D(x)}r
\end{equation*}
where $r\in\RR(X_\sigma)_{a_\sigma}^\ast$. Without loss of generality we may assume that $r(a_\sigma)>0$. We define a new Nash system of parameters at $a_\sigma$:
\begin{align*}
    u':&=ur^{\frac{1}{\ord_D(x)}}\\
    v':&=v
\end{align*}
The fact that it is a system of parameters can be directly checked by computation of its Jacobian matrix with respect to the system $u,v$. Let $U$ be a small neighbourhood of $a_\sigma$ mapped diffeomorphically by $(u',v')$ onto an open subset of $\R^2$.

The Jacobian matrix of $(x,y)$ with respect to the new system looks as follows:
\begin{equation*}
    \begin{bmatrix}
       \ord_D(x){u'}^{\ord_D(x)-1} & 0\\
       \pdv{y}{u'} & \pdv{y}{v'}
    \end{bmatrix}
\end{equation*}
Using Proposition \ref{matrix_det_ineq} we get:
\begin{align*}
    v_{u,U}\left(\ord_D(x){u'}^{\ord_D(x)-1}\pdv{y}{v'}\right)&\geq k_D+\ord_D(x)-1\\
    v_{u,U}\left(\pdv{y}{v'}\right)&\geq k_D.
\end{align*}
On the other hand (after decreasing $U$ if necessary) we can apply the inductive hypothesis to $\pdv{f}{y}$ to get
\begin{equation*}
    v_{u,U}\left(\pdv{f}{y}\right)>(k-1)k_D.
\end{equation*}
Thus by the chain rule
\begin{align}
    \pdv{f}{v'}=\pdv{f}{x}\pdv{x}{v'}+\pdv{f}{y}\pdv{y}{v'}&=\pdv{f}{y}\pdv{y}{v'}\nonumber\\
    v_{u,U}\left(\pdv{f}{v'}\right)&>kk_D\nonumber\\
    v_{u',U}\left(\pdv{f}{v'}\right)&=kk_D+\epsilon.\label{first_anal_prop_eq1}
\end{align}
for some $\epsilon>0$.
We can assume that in the coordinates $(u',v')$ $U$ is a square, i.e. it is of the form $\{(u',v'):|u'|<\delta,|v'|<\delta\}$. Let us fix some $0<v_0<\delta$. Then (\ref{first_anal_prop_eq1}) together with the mean value theorem gives us:
\begin{equation*}
    |f(u',v')|\leq C|u'|^{kk_D+\epsilon}+|f(u',v_0)|
\end{equation*}
for a constant $C$ independent of the choice of $(u',v')\in U$. Since $\ord_D(f)>kk_D$, we can choose $v_0$ to be such that $\frac{f}{u^{kk_D+1}}$ is regular at $(0,v_0)$; this implies that
\begin{equation*}
    |f(u',v_0)|\leq C_1|u'|^{kk_D+1}
\end{equation*}
and the conclusion follows.
\end{proof}

\begin{proof}[Proof of Proposition \ref{second_anal_prop}]
All the reasoning is still conducted in the system of parameters $u',v'$ from the last proof. We again proceed by induction on $k$. If $l=0$ then the conclusion is equivalent to Proposition \ref{first_anal_prop}, so we assume $l>0$. 

To begin with recall the equation 
\begin{equation*}
    \left(\pdv{x}{u}\pdv{y}{v}-\pdv{x}{v}\pdv{y}{u}\right)\pdv{f}{y}=\pdv{f}{v}\pdv{x}{u}-\pdv{f}{u}\pdv{x}{v}
\end{equation*}
from Observation \ref{invert_chain_rule}.
The determinant $\pdv{x}{u}\pdv{y}{v}-\pdv{x}{v}\pdv{y}{u}$ is non-zero at points where $\sigma$ is an isomorphism, so 
\begin{equation*}
    \ord_G\left(\pdv{x}{u}\pdv{y}{v}-\pdv{x}{v}\pdv{y}{u}\right)=0.
\end{equation*}
Using Proposition \ref{val_diff_ineq} we obtain
\begin{equation*}
    \ord_G\left(\pdv{f}{y}\right)\geq l-1.
\end{equation*}
Furthermore by the definition of differential multiplicity $\ord_D(\pdv{f}{y})> (k-1)k_D$ so we can apply the inductive hypothesis to $\pdv{f}{y}$ to get
\begin{equation*}
    v_{u,U}\left(\frac{\pdv{f}{y}}{v^{l-1}}\right)>(k-1)k_D
\end{equation*}
for some small neighbourhood $U$ of $a_\sigma$.
From the equations $\pdv{f}{v'}=\pdv{f}{y}\pdv{y}{v'}$ and $v_{u,U}(\pdv{y}{v'})\geq k_D$ we derived in the preceding proof we deduce
\begin{equation*}
    v_{u',U}\left(\frac{\pdv{f}{v'}}{v'^{l-1}}\right)>kk_D.
\end{equation*}
In other words, decreasing $U$ if necessary, there exist constants $C,\epsilon>0$ such that
\begin{equation*}
    \left|\pdv{f}{v'}\right|\leq C|v'|^{l-1}|u'|^{kk_D+\epsilon}
\end{equation*}
holds in $U$. We can now apply the mean value theorem to get the desired result, since $f(u',0)=0$.
\end{proof}
\setcounter{equation}{0}

\begin{proof}[Proof of Proposition \ref{third_anal_prop}]
Again we use induction on $k$, the case $k=0$ being trivial. Choose a system of parameters $x,y$ at $a$ with $(x,y)=(x)$ in $\RR(X_\sigma)_{a_\sigma}$. Again $x$ is rsnc at $a_\sigma$, so it can be written as
\begin{equation*}
    x=u^\alpha v^\beta r
\end{equation*}
where $\alpha=\ord_D(x)>0$, $\beta=\ord_G(x)>0$, $r\in\RR(X_\sigma)_{a_\sigma}^\ast$. Without loss of generality we assume $r(a_\sigma)>0$ and 
\begin{equation*}
    \beta k_D<\alpha k_G \text{ or there is an equality and }\beta\leq\alpha.
\end{equation*}
We define a new Nash system of parameters at $a_\sigma$ this time by
\begin{align*}
    u'&:=ur^{\frac{1}{\alpha}}\\
    v'&:=v
\end{align*}
The fact that it is a system of coordinates again follows from simple calculation of its Jacobian matrix. In these coordinates $x=u'^\alpha v'^\beta$. Recall the equation from Observation \ref{invert_chain_rule}: 
\begin{equation*}
    \left(\pdv{x}{u'}\pdv{y}{v'}-\pdv{x}{v'}\pdv{y}{u'}\right)\pdv{f}{y}=\pdv{f}{v'}\pdv{x}{u'}-\pdv{f}{u'}\pdv{x}{v'}
\end{equation*}
By the inductive hypothesis applied to $\pdv{f}{y}$ together with Proposition \ref{matrix_det_ineq} this implies
\begin{equation*}
    v_{uv,U}\left(\frac{\pdv{f}{v'}\pdv{x}{u'}-\pdv{f}{u'}\pdv{x}{v'}}{u^{kk_D+\alpha-1}v^{kk_G+\beta-1}}\right)>0
\end{equation*}
for some small $U$. In other words we can find positive constants $\epsilon$ and $C$ such that
\begin{equation}\label{third_anal_eq1}
    \left|\pdv{f}{v'}\pdv{x}{u'}-\pdv{f}{u'}\pdv{x}{v'}\right|\leq C|u'|^{kk_D+\alpha-1+\epsilon}|v'|^{kk_G+\beta-1+\epsilon}
\end{equation}
We again assume that $U$ is mapped diffeomorphically by $(u',v')$ onto a square. Choose some small $\gamma>1$ and a point $(u_0,v_0)\in U$ (in the coordinates $u',v'$). We define an arc $\xi:[1,\gamma]\rightarrow U$ by 
\begin{equation*}
    \xi:t\mapsto (u_0t^\beta,v_0t^{-\alpha})
\end{equation*}
We would like to use the fundamental theorem of calculus to bound the rate of change of $f$ along the arc, to do that we need to make some preliminary computations: 
\begin{align}
    \pdv{x}{u'}\circ\xi(t)&=\alpha u_0^{\alpha-1} v_0^\beta t^{-\beta}\nonumber \\
    \pdv{x}{v'}\circ\xi(t)&=\beta u_0^\alpha v_0^{\beta-1} t^{\alpha}\nonumber \\
    \frac{d\xi}{dt}=[\beta u_0t^{\beta-1},-\alpha v_0t^{-\alpha-1}]&=\left[\pdv{x}{v'}\circ\xi(t),-\pdv{x}{u'}\circ\xi(t)\right]u_0^{1-\alpha}v_0^{1-\beta}t^{\beta-\alpha-1} \label{third_anal_eq2}
\end{align}
Now we use the fundamental theorem of calculus and equations (\ref{third_anal_eq1}) and (\ref{third_anal_eq2}) to get
\begin{multline*}
    |f(u_0\gamma^\beta,v_0\gamma^{-\alpha})-f(u_0,v_0)|=\left|\int_1^\gamma df(\xi(t))\cdot d\xi(t)\; dt\right|=\\
    =\left|\int_1^\gamma
    \left(\pdv{f}{u'}\pdv{x}{v'}\circ\xi(t)-\pdv{f}{v'}\pdv{x}{u'}\circ\xi(t)\right) u_0^{1-\alpha}v_0^{1-\beta}t^{\beta-\alpha-1}dt\right|\leq\\
    \leq\left|Cu_0^{kk_D+\epsilon}v_0^{kk_G+\epsilon}\int_1^\gamma t^{\beta kk_D-\alpha kk_G+\epsilon\beta-\epsilon\alpha-1}dt\right|\leq
    \left|C_1 u_0^{kk_D+\epsilon}v_0^{kk_G+\epsilon}\log(\gamma)\right|
\end{multline*}
The last inequality stems from the fact that by our assumptions the exponent in the integrand is less than or equal to $-1$ if $\epsilon$ is small enough (here $C_1$ is independent of $u_0,v_0$ and $\gamma$).

Now let us now fix a generic value $u_1$ such that $\frac{f}{v^{kk_G+1}}$ is regular at $(u_1,0)$. If $(u_0,v_0)$ is sufficiently close to $(0,0)$ and $u_0$ is of the same sign as $u_1$ then after setting $\gamma=(\frac{u_1}{u_0})^\frac{1}{\beta}$ we have that $\xi$ is a well defined map into $U$. Hence we obtain:
\begin{multline*}
    \left|\frac{f(u_0,v_0)}{u_0^{kk_D} v_0^{kk_G}}\right|\leq |C_1u_0^{\epsilon}v_0^{\epsilon}\log(\gamma)|+\left|\frac{f(u_1,v_0(\frac{u_2}{u_0})^{-\frac{\alpha}{\beta}})}{u_0^{kk_D}v_0^{kk_G}}\right|\leq \\
    \leq |C_2u_0^{\epsilon/2}v_0^{\epsilon}|+\left|C_3v_0 u_0^{\frac{\alpha}{\beta}kk_G-kk_D+\frac{\alpha}{\beta}}\right|
\end{multline*}
which finishes the proof.
\end{proof}

One important corollary of these results is the following:
\begin{prop}\label{mult_stays_kregulous}
Let $f\in\RR^k(X)$ be relatively $\RR^k$-flat on $X_\sigma$. Let $g\in\RR(X_\sigma)$. Then $fg\in\RR^k(X)$.
\begin{proof}
The k-th partial derivatives of $fg$ are all sums of functions in the form
\begin{equation*}
    \frac{\partial^{k-i}f}{\partial x^{j_1} \partial y^{j_2}}\frac{\partial^i g}{\partial x^{i_1} \partial y^{i_2}}
\end{equation*}
for $0\leq i \leq k$, $i_1+i_2=i$, $j_1+j_2=k-i$. We prove that all such functions approach zero at points of the exceptional locus of $\sigma$, so they are $0$-regulous on $X$. Fix a point $a_\sigma$ lying in the exceptional locus. We distinguish two cases:
\begin{case}
$a_\sigma$ lies on precisely one exceptional divisor $D$.
\begin{proof}
Take $u,v$ to be a system of parameters at $a_\sigma$ with $u$ locally generating $D$.
By Observation \ref{diff_localmon} and the definition of differential multiplicity we have $u^{ik_D}\frac{\partial^i g}{\partial^{i_1} x \partial^{i_2} y}\in\RR(X_\sigma)_{a_\sigma}$. On the other hand by Proposition $\ref{first_anal_prop}$
\begin{equation*}
    v_{u,U}\left(\frac{\partial^{k-i}f}{\partial x^{j_1} \partial y^{j_2}}\right)>ik_D.
\end{equation*}
Hence
\begin{equation*}
     \frac{\partial^{k-i}f}{\partial x^{j_1} \partial y^{j_2}}\frac{\partial^i g}{\partial x^{i_1} \partial y^{i_2}}=\frac{\frac{\partial^{k-i}f}{\partial x^{j_1} \partial y^{j_2}}}{u^{ik_D}}u^{ik_D}\frac{\partial^i g}{\partial x^{i_1} \partial y^{i_2}}\text{ approaches zero at }a_\sigma.
\end{equation*}
\end{proof}
\end{case}

\begin{case}
$a_\sigma$ lies on two exceptional divisors $D$ and $G$.
\begin{proof}
Take $u,v$ to be a system of parameters locally generating $D,G$ respectively.
Again Observation \ref{diff_localmon} together with the definition of differential multiplicity gives us $u^{ik_D}v^{ik_G}\frac{\partial^i g}{\partial x^{i_1} \partial y^{i_2}}\in\RR(X_\sigma)_{a_\sigma}$.
On the other hand by Proposition \ref{third_anal_prop}
\begin{equation*}
    v_{uv,U}\left(\frac{\frac{\partial^{k-i}f}{\partial x^{j_1} \partial y^{j_2}}}{u^{ik_D}v^{ik_G}}\right)>0.
\end{equation*}
Hence
\begin{equation*}
     \frac{\partial^{k-i}f}{\partial x^{j_1} \partial y^{j_2}}\frac{\partial^i g}{\partial x^{i_1} \partial y^{i_2}}=
     \frac{\frac{\partial^{k-i}f}{\partial x^{j_1} \partial y^{j_2}}}{u^{ik_D}v^{ik_G}}u^{ik_D}v^{ik_G}\frac{\partial^i g}{\partial x^{i_1} \partial y^{i_2}}\text{ approaches zero at }a_\sigma.
\end{equation*}

\end{proof}
\end{case}
\end{proof}
\end{prop}

We are now almost ready to present the proof of the $\RR^k$-flat decomposition. It is kind of a recursive algorithm which repeatedly applies the following proposition:
\begin{prop}
Let $f\in\RR^k(X)$ be relatively $\RR^k$-flat on $X_\sigma$ and let $\sigma_1:X_{\sigma_1} \rightarrow X_\sigma$ be the blowup at $a_\sigma\in X_\sigma$. Then $f$ can be written as
\begin{equation*}
    fg=f_1+f_2
\end{equation*}
where 
\begin{itemize}
    \item $g\in\RR(X_\sigma)$, $g(a_\sigma)\neq 0$,
    \item $f_1\in\RR(X_\sigma)$ is relatively $\RR^k$-flat on $X_\sigma$,
    \item $f_2\in \R(X)$ is relatively $\RR^k$-flat on $X_{\sigma_1}$.
\end{itemize}
\begin{proof}
\begin{case}
$a_\sigma$ does not lie in the exceptional locus of $\sigma$. 
\begin{proof}
We define $f_1$ to be $f$'s k-th Taylor polynomial at $a_\sigma$, i.e.
\begin{align*}
    f_1&:=\sum_{i+j\leq k}\frac{1}{i!j!}x^iy^j\frac{\partial^{i+j} f}{\partial x^i\partial y^j}(a_\sigma)\\
    f_2&:=f-f_1
\end{align*}
Using Taylor's theorem it is clear that the order of $f_2$ at the newly created divisor is greater than $kk_D$ (as $k_D=1$ by Proposition \ref{recursive_k_mult}). It suffices to multiply the second equation firstly by the denominators of $f_1$ and $f_2$, and then by a high enough power of a regular function vanishing on the exceptional locus of $\sigma$.
\end{proof}
\end{case}

\begin{case}
$a_\sigma$ lies on two exceptional divisors $D$ and $G$.
\begin{proof}
In this case Proposition \ref{third_anal_prop} tells us that the order of $f$ at the newly created divisor is greater than $kk_D+kk_G$, so by Proposition \ref{recursive_k_mult} (the "moreover" part) we can just take $g=1,f_1=0,f_2=f$. 
\end{proof}
\end{case}

\begin{case}
$a_\sigma$ lies on precisely one exceptional divisor $D$. 
\begin{proof}
This is the most involved case. Let $u,v$ be a system of parameters at $a_\sigma$ with $u$ locally generating $D$. Define $G$ as the germ of $\Z(v)$ at $a_\sigma$.
\begin{claim*}
For every $0\leq l \leq k$ there exist $g\in\RR(X_\sigma)$, $g(a_\sigma)\neq 0$ and $f_1\in\RR(X_\sigma)$, $f_1$ relatively $\RR^k$-flat on $X_\sigma$ such that $\ord_{G}(gf-f_1)\geq l$.
\begin{proof}
We reason inductively; when $l=0$ we take $g:=1,f_1:=0$ and there is nothing to prove.

Apply the inductive hypothesis to find $g,f_1$ with
\begin{equation*}
    \ord_{G}(gf-f_1)\geq l-1.
\end{equation*}
The function in the above equation is $k$-regulous because $f_1$ satisfies the assumptions of Proposition \ref{rel_flat_are_flat}, and $f$ and $g$ satisfy the assumptions of Proposition \ref{mult_stays_kregulous}. Hence Proposition \ref{second_anal_prop} applies here and gives
\begin{equation}\label{inductive_eq1}
    v_{u,U}\left(\frac{gf-f_1}{v^{l-1}}\right)>kk_D.
\end{equation}
Now the function 
\begin{equation*}
    \frac{gf-f_1}{u^{kk_D+1}v^{l-1}}    
\end{equation*}
belongs to $\RR(X_\sigma)_G$ so it makes sense to talk about its restriction to $G$. Because of (\ref{inductive_eq1}) the restriction is regular at $a_\sigma$, so we can extend it to a function $h\in \RR(X_\sigma)_{a_\sigma}$.
By the definition
\begin{equation*}
    \ord_G(gf-f_1-u^{kk_D+1}v^{l-1}h)\geq l.
\end{equation*}
Now it suffices to multiply the function in the above equation by some $g_1\in\RR(X_\sigma), g_1(a_\sigma)\neq 0$ vanishing in high order on all the other exceptional divisors to get rid of the denominators so that 
\begin{align*}
    g'&=gg_1\\
    f_1'&=g_1(f_1+u^{kk_D+1}v^lh)
\end{align*}
satisfy the desired conditions.
\end{proof}
\end{claim*}

In the end we apply the claim with $l=k$ and set $f_2:=gf-f_1$. As before Proposition \ref{second_anal_prop}  tells us that 
\begin{equation*}
    v_{u,U}\left(\frac{f_2}{v^{k}}\right)>kk_D.
\end{equation*}
This implies that the order of $f_2$ at the newly created divisor is greater than $kk_D+k$, so by Proposition \ref{recursive_k_mult} this is the decomposition we seek.
\end{proof}
\end{case}
\end{proof}
\end{prop}

Actually we can easily get rid off of the denominator $g$.
\begin{cor}\label{induction_decomp}
Let $f\in\RR^k(X)$ be relatively $\RR^k$-flat on $X_\sigma$ and let $\sigma_1:X_{\sigma_1} \rightarrow X_\sigma$ be the blowup at $a_\sigma\in X_\sigma$. Then $f$ can be written as
\begin{equation*}
    f=f_1+f_2
\end{equation*}
where $f_1\in\RR(X_\sigma)$ is relatively $\RR^k$-flat on $X_\sigma$ and $f_2\in \R(X)$ is relatively $\RR^k$-flat on $X_{\sigma_1}$.
\begin{proof}
Take $g,f_1,f_2$ from the last proposition. Let $h\in\RR(X_\sigma)$ be any function with zero set equal to $\{a_\sigma\}$ and let $m$ denote the differential multiplicity of the divisor induced by $\sigma_1$. Then
\begin{equation*}
    f=\frac{gf_1}{g^2+h^{2km}}+\frac{gf_2+fh^{2km}}{g^2+h^{2km}}
\end{equation*}
is the desired decomposition.
\end{proof}
\end{cor}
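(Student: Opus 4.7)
The plan is to convert the decomposition $fg = f_1 + f_2$ from the preceding Proposition into a decomposition of $f$ itself by absorbing the unit factor $g$ (recall $g(z_\sigma) \neq 0$) into both summands via a judicious strictly positive denominator. The naive approach of simply dividing by $g$ fails because $g$ may vanish elsewhere on $X_\sigma$, so the first summand would lose regularity and the relative r$k$-flatness could break on divisors disjoint from $z_\sigma$. The standard trick that resolves this is to regularize $g$ by adding a controlled power of a function that vanishes only at $z_\sigma$.

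Concretely, I would pick any $h \in \RR(X_\sigma)$ with $\Z(h) = \{z_\sigma\}$, let $m := k_e$ where $e$ is the exceptional divisor induced by $\sigma_1$, and write
\[
f \;=\; \frac{g f_1}{g^2 + h^{2km}} \;+\; \frac{g f_2 + f h^{2km}}{g^2 + h^{2km}},
\]
an identity that follows from $g(f_1 + f_2) = g \cdot fg = fg^2$ together with $f(g^2 + h^{2km}) = fg^2 + fh^{2km}$. The denominator $g^2 + h^{2km}$ is strictly positive on all of $X_\sigma$: at points $x \neq z_\sigma$ we have $h(x) \neq 0$, while at $z_\sigma$ we have $g(z_\sigma) \neq 0$. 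Consequently $g^2 + h^{2km}$ is regular and nowhere zero, so it contributes order $0$ along every exceptional divisor, whether on $X_\sigma$ or on $X_{\sigma_1}$.

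It then remains to verify the two claimed membership statements. For the first summand, since the denominator has trivial divisor contribution, the order along any exceptional divisor $d$ on $X_\sigma$ is bounded below by $\ord_d(f_1) > k k_d$, giving relative r$k$-flatness on $X_\sigma$. For the second summand I would check the exceptional divisors of $X_{\sigma_1}$ in two cases: (i) the strict transforms of divisors $d$ on $X_\sigma$, which have the same r-multiplicity as $d$ (since r-multiplicity is determined by the valuation) and along which $h$ has generic order $0$, so the relative r$k$-flatness of $f_2$ on $X_{\sigma_1}$ and of $f$ on $X_\sigma$ each give the required strict inequality on the numerator; and (ii) the new divisor $e$, where $\ord_e(h) \geq 1$ forces $\ord_e(f h^{2km}) \geq 2km > k k_e$, while $\ord_e(g f_2) \geq \ord_e(f_2) > k k_e$ by the hypothesis on $f_2$. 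Adding gives $\ord_e(\text{numerator}) > k k_e$ as needed.

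The only real conceptual step is recognizing that the sum-of-squares-style denominator $g^2 + h^{2km}$ is the right regularization; the exponent $2km$ is chosen just large enough that the correction term $fh^{2km}$ is innocuous on the new divisor $e$ (where $k_e = m$), while the fact that $h$ has generic order $0$ on every other exceptional divisor ensures that nothing is disturbed elsewhere. Once the denominator is fixed, the verification is purely a matter of reading off orders on the finitely many exceptional divisors.
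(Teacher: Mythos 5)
Your proposal is essentially identical to the paper's proof: you use the same regularizing denominator $g^2 + h^{2km}$ with the same choice of $h$ (vanishing exactly at $z_\sigma$) and the same exponent $2km$ with $m$ the r-multiplicity of the new exceptional divisor, and the same algebraic identity converting $fg = f_1 + f_2$ into the displayed decomposition of $f$. Your verification of the order bounds on the exceptional divisors spells out what the paper leaves implicit, but the argument is the same.
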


\begin{thm}[$\RR^k$-flat decomposition in dimension 2]\label{rk_flat_decomposition}
Let $f\in\RR^k(X)$. Then $f$ can be written as a sum of $\RR^k$-flat functions on $X$.
\begin{proof}
Take a sequence of blowups after which $f$ becomes regular. 
\begin{center}
    \begin{tikzcd}
    X & X_1 \arrow[l, "\sigma_1"'] & X_2 \arrow[l, "\sigma_2"'] & \dots \arrow[l, "\sigma_3"'] & X_n \arrow[l, "\sigma_n"']
    \end{tikzcd}
\end{center}
By Corollary \ref{induction_decomp} $f$ can be written as 
\begin{equation*}
    f=g_1+f_1
\end{equation*}
where $g_1$ is regular (and hence $\RR^k$-flat) on $X$, meanwhile $f_1$ is relatively $\RR^k$-flat on $X_1$.
Since $g_1$ is regular, $f_1$ is $k$-regulous. We continue and again use Corollary \ref{induction_decomp} to find a decomposition
\begin{equation*}
    f_1=g_2+f_2
\end{equation*}
such that $g_2$ is regular and relatively $\RR^k$-flat on $X_1$, meanwhile $f_2$ is relatively $\RR^k$-flat on $X_2$. Now $g_2$ is $\RR^k$-flat on $X$ by Proposition \ref{rel_flat_are_flat}, so both $g_2$ and $f_2$ are $k$-regulous.

Now we write down $f_2$ as 
\begin{equation*}
    f_2=g_3+f_3
\end{equation*}
with $g_3$ regular and relatively $\RR^k$-flat on $X_2$ and $f_3$ relatively $\RR^k$-flat on $X_3$. We repeat the reasoning until we get to $X_n$; then $f_n$ is relatively $\RR^k$-flat and regular on $X_n$ hence it is $\RR^k$-flat. This way we obtain the required decomposition
\begin{equation*}
    f=g_1+g_2+\dots+g_n+f_n.
\end{equation*}
\end{proof}
\end{thm}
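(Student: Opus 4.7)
The plan is to induct along a resolution of singularities of $f$ using Corollary \ref{induction_decomp} repeatedly. First I would invoke the classical fact (in dimension two) that there exists a sequence of point blowups $\sigma_i:X_i\to X_{i-1}$ for $1\le i\le m$, with $X_0=X$, after which $f$ becomes regular on $X_m$. Trivially $f$ is relatively r$k$-flat on $X_0=X$, since there are no exceptional divisors yet at which to test the defining inequality.

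Next, starting from $f_0:=f\in\RR^k(X)$, I would iterate in the following way: having produced $f_{i-1}\in\RR^k(X)$ that is relatively r$k$-flat on $X_{i-1}$, apply Corollary \ref{induction_decomp} to the blowup $\sigma_i:X_i\to X_{i-1}$ to obtain a decomposition
\begin{equation*}
f_{i-1}=g_i+f_i,
\end{equation*}
where $g_i\in\RR(X_{i-1})$ is regular and relatively r$k$-flat on $X_{i-1}$, while $f_i\in\R(X)$ is relatively r$k$-flat on $X_i$. By Observation \ref{rel_flat_are_flat} the summand $g_i$ is then r$k$-flat on $X$, and Corollary \ref{rk_falt_is_kregulous} guarantees $g_i\in\RR^k(X)$; consequently $f_i=f_{i-1}-g_i$ is still a $k$-regulous function on $X$, so the next iteration is admissible.

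After $m$ steps the remainder $f_m$ is both regular on $X_m$ (since $f$ was) and relatively r$k$-flat there, hence r$k$-flat on $X$, again by Observation \ref{rel_flat_are_flat}. Summing the telescoping identities yields
\begin{equation*}
f = g_1 + g_2 + \cdots + g_m + f_m,
\end{equation*}
a finite sum of r$k$-flat functions on $X$, as required.

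The principal obstacle has already been dispatched by Corollary \ref{induction_decomp} itself: namely, passing from local Taylor-style r$k$-flat decompositions, obtainable from the analytic bounds of Propositions \ref{first_anal_prop}--\ref{third_anal_prop}, to a genuinely \emph{global} regular summand $g_i\in\RR(X_{i-1})$. If one tried to bypass that corollary, this globalisation would be the hard step, since naive patching would produce only an element of some localisation $\RR(X_{i-1})_{z_\sigma}$; the identity $f=\frac{gf_1}{g^2+h^{2km}}+\frac{gf_2+fh^{2km}}{g^2+h^{2km}}$ used in its proof is essentially what makes the induction go through, and I would not attempt to replace it.
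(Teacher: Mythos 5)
Your proof is correct and follows essentially the same inductive telescoping argument as the paper, with the same key inputs (Corollary \ref{induction_decomp} for the decomposition at each blowup, Observation \ref{rel_flat_are_flat} to pass from relatively r$k$-flat to r$k$-flat on $X$, and Corollary \ref{rk_falt_is_kregulous} to see the remainder stays $k$-regulous). In fact you are slightly more careful than the paper in two places: you explicitly note that the induction base case (relative r$k$-flatness on $X_0=X$) is vacuous, and you correctly state that each $g_i$ is r$k$-flat \emph{on $X$} (the paper's phrase ``r$k$-flat on $X_1$'' is a minor slip, since Observation \ref{rel_flat_are_flat} always lands back on the base $X$).
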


\begin{rem}\label{decomp_rem}
The algorithm above gives something a bit stronger, given a resolution $\sigma=\sigma_1\circ\dots\sigma_n$ of $f$ it decomposes $f$ into a sum of functions $\Sigma f_i$ with $f_i$ being regular and relatively $\RR^k$-flat on $X_i$ ($X_0:=X$). This fact will be useful in the next section.
\end{rem}

\section{Proof of Theorem \ref{additional_theorem}}\label{An additional result}
\begin{lem}\label{polynomial_form_lemma}
Let $m>l>0$ and let $a_1,\dots,a_n$ be variables. Then $(a_1+\dots+a_n)^m$ can be written as a linear combination (with coefficients in $\mathbb{Z}$) of polynomials in the forms
\begin{equation}\label{form_of_the_polynomial}
    ba_i^{l+1-\alpha}(a_{i+1}+\dots+a_n)^\alpha
\end{equation}
for $1\leq i\leq n$, $0\leq \alpha\leq l$ and $b$ being a monomial (of order $m-l-1$) in the variables $a_1,\dots,a_i$.
\begin{proof}
Expanding $(a_1+\dots+a_n)^m$ in the first variable it gets represented as a sum of polynomials of the forms
\begin{equation*}
    a_1^{m-\alpha}(a_2+\dots+a_n)^\alpha
\end{equation*}
If $\alpha\leq l$ then the polynomial is of the form (\ref{form_of_the_polynomial}). Otherwise we use induction to represent $(a_2+\dots+a_n)^\alpha$ as a linear combination of polynomials of the desired form and notice that multiplication by $a_1^{m-\alpha}$ makes them stay that way.
\end{proof}
\end{lem}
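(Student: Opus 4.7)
The plan is to proceed by induction on $n$, keeping $l$ fixed, and proving the statement for all exponents $m > l$ simultaneously. The base case $n = 1$ reduces to exhibiting $a_1^m$ in the prescribed form: take $i = 1$, $\alpha = 0$, and $b = a_1^{m-l-1}$, so that $b \cdot a_1^{l+1} \cdot (a_2 + \dots + a_n)^0 = a_1^m$, interpreting the empty sum raised to the $0$-th power as $1$.

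For the inductive step I would expand in the first variable by the binomial theorem,
\begin{equation*}
    (a_1 + S)^m = \sum_{\alpha = 0}^{m} \binom{m}{\alpha} a_1^{m-\alpha} S^\alpha, \qquad S := a_2 + \dots + a_n,
\end{equation*}
and split the sum according to whether $\alpha \leq l$ or $\alpha > l$. For $\alpha \leq l$, the summand rewrites as $\binom{m}{\alpha} \cdot a_1^{m-l-1} \cdot a_1^{l+1-\alpha} \cdot S^\alpha$, which is already of the required shape with $i = 1$, exponent $\alpha$ on the tail, and monomial $b = a_1^{m-l-1}$ of degree $m-l-1$ in $a_1$.

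For $\alpha > l$, apply the inductive hypothesis (valid because $n-1 < n$ and $\alpha > l$) to $S^\alpha = (a_2 + \dots + a_n)^\alpha$, writing it as a $\mathbb{Z}$-linear combination of terms $b' \cdot a_{i'}^{l+1-\alpha'}(a_{i'+1} + \dots + a_n)^{\alpha'}$ with $b'$ a monomial of degree $\alpha - l - 1$ in $a_2, \dots, a_{i'}$ and $0 \leq \alpha' \leq l$. Multiplying each such term by $\binom{m}{\alpha} a_1^{m-\alpha}$ absorbs the factor $a_1^{m-\alpha}$ into the monomial part, producing $\binom{m}{\alpha} (a_1^{m-\alpha} b') \cdot a_{i'}^{l+1-\alpha'}(a_{i'+1} + \dots + a_n)^{\alpha'}$, in which $a_1^{m-\alpha} b'$ is a monomial in $a_1, \dots, a_{i'}$ of total degree $(m-\alpha) + (\alpha - l - 1) = m - l - 1$. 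So every such term is in the required form.

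The argument is essentially bookkeeping; the only checks are that the degree of the amalgamated monomial adds up to exactly $m-l-1$, that the new index $i'$ lies in $\{2, \dots, n\} \subset \{1, \dots, n\}$ so that incorporating $a_1$ into $b'$ is legitimate, and that the empty-sum convention handles the boundary case. No combinatorial identity or deeper ingredient beyond the binomial theorem is needed; the content of the lemma is simply that the structural form is preserved under binomial expansion in one variable.
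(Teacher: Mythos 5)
Your proposal is correct and follows the same line of argument as the paper: binomial expansion in $a_1$, the terms with $\alpha \leq l$ are already of the required shape with $b = a_1^{m-l-1}$, and the terms with $\alpha > l$ are handled by applying the inductive hypothesis to $(a_2 + \dots + a_n)^\alpha$ and then absorbing the factor $a_1^{m-\alpha}$ into the monomial. The paper leaves the induction variable, the base case, and the degree count implicit; you have simply spelled out these bookkeeping checks, which all come out correctly.
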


We will now present the proof of Theorem \ref{additional_theorem}. For clarity let us first recall its statement:
\begin{thmABC}{\ref{additional_theorem}}
Let $m,l\geq 0$. Let $\dim(X)=2$ and let $f\in\RR^k(X)$ be written as a fraction $f=\frac{p}{q}$ for $p,q\in\RR(X)$. Then
\begin{equation*}
    p^lf^m\in\RR^{kl+k+2l}(X).
\end{equation*}
\begin{proof}
Notice that $p^lf^m=q^lf^{m+l}$ so after substituting $m:=m+l$ we deal with $q^lf^m$ instead. We first consider the case when the zero set of $q$ is of codimension at least 2. Using Remark \ref{codim2_existence} we find a multiblowup
\begin{center}
    \begin{tikzcd}
    X & X_1 \arrow[l, "\sigma_1"'] & X_2 \arrow[l, "\sigma_2"'] & \dots \arrow[l, "\sigma_3"'] & X_n \arrow[l, "\sigma_n"']
    \end{tikzcd}
\end{center}
such that $q$ is snc and relatively $\Rr{2}$-flat on $X_n$. By the $\RR^k$-flat decomposition (Remark \ref{decomp_rem}) we can write $f$ as
\begin{equation*}
    f=f_0+\dots+f_n
\end{equation*}
with $f_i\in\RR(X_i)$ being relatively $\RR^k$-flat on $X_i$. Ignoring the trivial cases we may assume that $m>l>0$, we can now apply Lemma \ref{polynomial_form_lemma} to write $q^lf^m$ as a linear combination of functions in the forms
\begin{equation*}
    q^lbf_i^{l+1-\alpha}(f_{i+1}+\dots+f_n)^\alpha
\end{equation*}
for $0\leq i\leq n$, $0\leq \alpha\leq l$ and $b$ being a monomial in the variables $f_0,\dots,f_i$. We are to show that all these functions are $kl+k+2l$-regulous. Notice that 
\begin{equation*}
    q(f_{i+1}+\dots+f_n)=p-q(f_0+\dots+f_i)\in\RR(X_i)
\end{equation*}
so
\begin{equation*}
    q^lbf_i^{l+1-\alpha}(f_{i+1}+\dots+f_n)^\alpha=q^{l-\alpha}bf_i^{l+1-\alpha}(q(f_{i+1}+\dots+f_n))^\alpha\in\RR(X_i)
\end{equation*}
We claim that the function above is relatively $\RR^{kl+k+2l}$-flat on $X_i$. Both $f_i$ and $f_{i+1}+\dots+f_n$ are relatively $\RR^k$-flat on $X_i$ meanwhile $q$ is relatively $\Rr{2}$-flat on $X_i$. This means that for every exceptional divisor $D$ on $X_i$:
\begin{equation*}
    \ord_D(q^lbf_i^{l+1-\alpha}(f_{i+1}+\dots+f_n)^\alpha)> 2lk_D+(l+1)kk_D=(kl+k+2l)k_D
\end{equation*}
and the claim follows from Proposition \ref{rel_flat_are_flat}.

In the general case, choose a point $a\in X$ and write $f=\frac{p_1}{q_1}$ as an irreducible fraction in $\RR(X)_a$. It is well known that the indeterminancy locus of $f$ is of codimension at least $2$, so the zero set of $q_1$ is of codimension at least $2$ when restricted to a small Zariski neighbourhood $A$ of $a$. Making it even smaller we may assume that $p_1,\frac{q}{q_1}\in\RR(A)$. By the already covered case:
\begin{equation*}
    q^lf^m=\left(\frac{q}{q_1}\right)^lq_1^lf^m\in\RR^{kl+k+2l}(A).
\end{equation*}
Since $a$ was arbitrary the proof is finished.
\end{proof}
\end{thmABC}
\begin{rem}
The theorem is optimal with $X=\R^2$ as long as $m>0$. For $l=0$ that is obvious, for $m,l>0$ an explicit example can be given by
\begin{equation*}
    f:=1+\frac{y^{3kl+6l+1}}{y^{6l}+x^2}
\end{equation*}
The fact that $f$ is $k$-regulous can be checked directly by verifying that $f-1$ is $\RR^k$-flat. Checking that $p^lf^m$ with $p=y^{3kl+6l+1}+y^{6l}+x^2$ is not $kl+k+2l+1$-regulous is a bit more difficult, although it can be done in an elementary way. We omit the details.
\end{rem}

\section{Discussion}\label{Discussion}
The main result of the paper applies only in the two-dimensional case, so it leaves the following question still unanswered: 
\begin{que}
    Can every $k$-regulous function defined on a variety of dimension greater than 2 be extended to an ambient variety?
\end{que}
By the $\RR^k$-flat extension theorem answering the following question affirmatively would also bring an answer to the previous one:
\begin{que}
    Can every $k$-regulous function defined on a variety of dimension greater than 2 be written as a finite sum of $\RR^k$-flat functions?
\end{que}

All the proofs in the paper that require the assumption on the dimension  being equal to two are contained in Section \ref{RR^k-flat decomposition in dimension 2}. One obstacle preventing them from working in higher dimensions is that the situation there becomes more involved as it is harder to control the blowups which may admit centres of arbitrary dimension. Nonetheless it seems that there might be a deeper problem with reproducing the reasoning as shown by the following example:
\begin{ex}
Let $f:=\frac{x^4}{x^2+z^2},g:=\frac{x^4}{x^4+y^2}\in\R(x,y,z)$. Then:
\begin{enumerate}
    \item $f$ is $1$-regulous,
    \item $fg$ is \emph{not} $1$-regulous,
    \item there exists an admissible sequence of two blowups after which $h$ becomes regular and $f$ becomes relatively $\RR^1$-flat.
\end{enumerate}
This shows that Proposition \ref{mult_stays_kregulous} does not hold in the three-dimensional case.
\begin{proof}
The first point is trivial. The second one follows from computing the partial derivative of the product with respect to $y$:
\begin{equation*}
    \pdv{fg}{y}=-\frac{2x^8y}{(x^2+z^2)(x^4+y^2)^2}
\end{equation*}
After substituting $(x,y,z)=(t,t^2,t)$ we get that the limit of the above expression at $0$ is $-\frac{1}{4}$ yet after substituting $(x,y,z)=(0,t,t)$ it is equal to $0$.

It is easy to check that the function $\frac{x^4}{x^4+y^2}\in\R(x,y)$ becomes regular after a sequence of two blowups, the first one at $(0,0)$ and the second one at a point of the exceptional divisor of the first one. This sequence induces a sequence in three dimensions, let the blowups and their exceptional divisors be called $\sigma_1,D_1$ and $\sigma_2,D_2$ respectively. $\sigma_1$ is the blowup at $\Z(x,y)$ while $\sigma_2$ is a blowup at some one dimensional variety contained in $D_1$. We have $\sigma_1(D_1)=\sigma_1\circ\sigma_2(D_2)=\Z(x,y)$. This implies that
\begin{enumerate}
    \item $\ord_{D_1}(x)>0,\ord_{D_2}(x)>0$
    \item $\ord_{D_1}(x^2+z^2)=\ord_{D_2}(x^2+z^2)=0$
\end{enumerate}
Hence $\ord_{D_1}(f)\geq 4,\ord_{D_2}(f)\geq 4$. On the other hand by Proposition \ref{recursive_k_mult} $k_{D_1}\leq 1, k_{D_2}\leq 2$ so $f$ indeed becomes relatively $\RR^1$-flat after the two blowups.
\end{proof}
\end{ex}
This example does not necessarily imply that the entirety of the approach of Section \ref{RR^k-flat decomposition in dimension 2} cannot be made more-than-two-dimensional, although it surely does require much reworking.

Let us also give some comments on our results involving sums of squares. Let $k\geq 2$ and consider the following example:
\begin{ex}
There exists a homogeneous positive definite form of degree $2k$ in $4$ variables which is not a sum of squares of polynomials (it follows for example from the fact that the set of sums of squares is closed in the space of $2k$-forms with standard topology together with a famous result of Hilbert (\cite{Reznick})). By \cite[Theorem 3.2]{sums_of_even_powers} the form is not a sum of squares of $k$-regulous functions. 
\end{ex}
In this example Theorem \ref{sos2} applies proving that the polynomial is a sum of squares of $k-1$-regulous functions giving the optimal class of smoothness. 

By \cite{Becker} the polynomial is a sum of $2k$-th powers of rational functions, which shows that the assumption on the derivative of $f$ in the theorem cannot be dropped whenever $k>1$. It would be interesting to decide whether such a statement holds when $k=1$ as well, in other words:
\begin{que}
    Can every positive semi-definite regular function be written as a sum of squares of $1$-regulous functions?
\end{que}
The author believes that an affirmative answer to this question is plausible as it is only a slight improvement on Theorem \ref{sos1}. 
\section{Acknowledgments}
The author would like to thank his tutor Professor Wojciech Kucharz, his former tutor Doctor Tomasz Kowalczyk and the entire Department of Singularity Theory of Jagiellonian University for introducing him into the topic of regulous functions, and all the feedback regarding the paper they have given him.
\bibliographystyle{plain}
\bibliography{references}
\end{document}